\documentclass[a4paper]{article}

\usepackage{a4wide}
\usepackage[english]{babel}
\usepackage[T1]{fontenc}

\usepackage{amsmath,amsfonts,amssymb,amsthm}

\usepackage{mathrsfs}

\usepackage{enumitem}

\usepackage[svgnames]{xcolor}

\usepackage{bbold}

\usepackage{tikz}
\usetikzlibrary{fit}
\usetikzlibrary{arrows.meta}

\usepackage{tikz-cd}

\usepackage{witharrows}

\usepackage{hyperref}

\usepackage{graphicx}
\usepackage{caption}

\usetikzlibrary{babel}

\theoremstyle{definition}
\newtheorem{definition}{Definition}[section]
\newtheorem{proposition}[definition]{Proposition}
\newtheorem{example}[definition]{Example}
\newtheorem{lemma}[definition]{Lemma}
\newtheorem{theorem}[definition]{Theorem}

\newtheorem{conjecture}[definition]{Conjecture}

\theoremstyle{remark}
\newtheorem{remark}{Remark}

\definecolor{ColBlack}{RGB}{0,0,0} 
\definecolor{ColWhite}{RGB}{255,255,255} 
\definecolor{ColBlue}{RGB}{13,20,120} 
\definecolor{ColCyan}{RGB}{51,122,121} 
\definecolor{ColLCyan}{RGB}{217,253,253} 

\tikzstyle{Centering}=[{baseline={([yshift=-0.5ex]current bounding box.center)}}]
\tikzstyle{NodeGraph}=[circle,draw=ColCyan,fill=ColLCyan,inner sep=1pt,minimum size=4mm, thick,font=\scriptsize]
\tikzstyle{UnlabeledNodeGraph}=[NodeGraph,minimum size=2mm]
\tikzstyle{RootGraph}=[NodeGraph,rectangle]
\tikzstyle{EdgeGraph}=[ColBlue,cap=round,very thick]
\tikzstyle{ArcGraph}=[EdgeGraph,->]
\tikzstyle{NodeHyper}=[circle, draw=black, fill= black!20,inner sep=1pt,minimum size=3mm, thick, font=\scriptsize]
\tikzstyle{EdgeHyper}=[black, cap=roud, thick]
\tikzstyle{NodeFree}=[circle,draw=black,fill=white,inner sep=1pt,minimum size=6mm, thick,font=\scriptsize]
\tikzstyle{EdgeFree}=[black,cap=round,very thick]


\newcommand{\OEIS}[1]{\href{http://oeis.org/#1}{{\bf #1}}}              

\newcommand{\noemp}{\not = \emptyset}                                   
\newcommand{\take}{\leftarrow}                                          

\newcommand{\set}[1]{\left\{#1\right\}}                                 
\newcommand{\spe}[1]{\textsc{#1}}                                               
\newcommand{\op}[1]{\mathbf{#1}}                                        
\newcommand{\card}[1]{|#1|}                                             
\newcommand{\iso}[1]{\left[#1\right]}                                   

\newcommand{\N}{\mathbb{N}}                                             

\newcommand{\K}{\mathbb{K}}

\newcommand{\p}{\mathcal{P}}                                            
\newcommand{\symgrp}{\mathfrak{S}}                                      
\newcommand{\point}[1]{#1^{\bullet}}                                    

\newcommand{\id}{Id}                                                    
\newcommand{\twist}{\tau}                                               
\newcommand{\hilbert}[1]{\mathcal{H}_{#1}}                              
\newcommand{\comp}[1][\ast]{\circ_{#1}}                                 
\newcommand{\sign}{\text{sign}}                                         
\newcommand{\scalar}[2]{\langle #1 \,|\, #2\rangle}                     
\newcommand{\ori}{\texttt{o}}                                           

\newcommand{\T}{\spe{T}}                                                
\newcommand{\G}{\spe{G}}                                                
\newcommand{\MG}{\spe{MG}}                                              
\newcommand{\MHG}{\spe{MHG}}                                            
\newcommand{\Poly}{\spe{Pol}_+}                                         
\newcommand{\stree}[1]{\mathscr{S}_{{#1}}}                              
\newcommand{\augm}[2]{#1\text{-}#2}                                     
\newcommand{\func}[2]{\mathcal{F}_{#1}^{#2}}                            
\newcommand{\Sym}{\spe{E}}                                              
\newcommand{\Comp}{\spe{Comp}}                                          

\newcommand{\Operad}{\mathcal{O}}                                       
\newcommand{\PLie}{\op{PLie}}                                           
\newcommand{\NAP}{\op{NAP}}                                             
\newcommand{\Com}{\op{Com}}                                             
\newcommand{\ComMag}{\op{ComMag}}                                       
\newcommand{\free}[1]{\op{Free}_{#1}}                                   
\newcommand{\Ope}{\mathrm{Ope}}                                         
\newcommand{\Lie}{\op{Lie}}                                             
\newcommand{\ST}{\op{ST}}                                               
\newcommand{\Seg}{\op{Seg}}                                             
\newcommand{\SP}{\op{SP}}                                               
\newcommand{\LP}{\op{LP}}                                               

\newcommand{\Points}[2]{                                                
    \begin{tikzpicture}[Centering,scale=.6]
        \node[NodeGraph](a)at(0,0){$#1$};
        \node[NodeGraph](b)at(1,0){$#2$};
    \end{tikzpicture}}

\newcommand{\Segment}[2]{                                               
    \begin{tikzpicture}[Centering,scale=.6]
        \node[NodeGraph](a)at(0,0){$#1$};
        \node[NodeGraph](b)at(1,0){$#2$};
        \draw[EdgeGraph](a)--(b);
    \end{tikzpicture}}

\author{J.-C. Aval\footnote{Univ. Bordeaux, Bordeaux INP, CNRS, LaBRI, UMR 5800, F-33400 Talence, France},\hspace{0.2cm} S. Giraudo\footnote{Univ. du Québec à Montréal, LaCIM,
    Montréal, H2X~3Y7, Canada}\hspace{0.2cm} T. Karaboghossian\footnotemark[1]\hspace{0.2cm} and\hspace{0.2cm} A. Tanasa\footnotemark[1] 
\footnote{H. Hulubei Nat. Inst. Phys. Nucl. Engineering Magurele, Romania, EU} \\
{\small aval@labri.fr, giraudo.samuele@uqam.ca, theo.karaboghossian@u-bordeaux.fr, ntanasa@u-bordeaux.fr}}

\title{Operads on graphs: extending the pre-Lie operad and general construction}
\date{}

\begin{document}
\maketitle

\begin{abstract}
The overall aim of this paper is to define a structure of graph operads, thus generalizing the 
celebrated pre-Lie operad on rooted trees.
More precisely, we define two 
operads on multigraphs, and 
exhibit
a 
non trivial link between them and the pre-Lie and Kontsevich-Willwacher operads. We study 
one of these 
operads in more detail. 
While its structure is too involved to 
exhibit
a description 
by generators and relations,  we show that it has interesting finitely generated sub-operads, 
with links with the commutative and the magmatic commutative operads. In particular, one of 
them is Koszul
and this allows us to 
compute its Koszul dual. Finally, we introduce a new framework 
on species and operads and a general way to define operads on multigraphs.

\end{abstract}
\bigskip
\small{
\noindent\textbf{MSC classes:} 5E99, 05C76, 05C25.
\\
\noindent\textbf{Keywords:} Graphs, Species, Operads, Pre-Lie Operad, Koszul duality.}

\tableofcontents

\section*{Introduction}

Operads are mathematical structures which were first introduced as a way to formalize the 
notion of type of algebra: given a set of multilinear maps and relations between them,
the associated operad is composed of all the multilinear maps obtained by composing those in the
initial set.
These form in fact the generators of the operad. As detailed in \cite{MSJ02}, operad 
theory was first used in algebraic topology in the 1960s but had a 'renaissance' in the 1990s where 
it began to be used in many other fields, see for example \cite{LV12} for a very general algebraic 
approach of the theory. In particular, in combinatorics, operads provide the right framework to 
define the embedding of a combinatorial object in another \cite{CSLC, Gir18}. In this context, operads are defined 
by directly describing their objects and how to embed them. This contrast with the original way to define
them by generators and relations, and passing from one definition to the other is often a difficult
question and provides a lot of insight on the structure of the operad. 

A particularly successful example
is the pre-Lie operad which was first defined by being generated by the pre-Lie operator i.e. an antisymmetric
bilinear map $x\vartriangleleft y$ such that 
$(x\vartriangleleft y)\vartriangleleft z -x \vartriangleleft(y\vartriangleleft z) = (x\vartriangleleft z)\vartriangleleft y - x\vartriangleleft(z\vartriangleleft y)$.
Chapoton and Livernet later proved in \cite{CL01} that the elements of the pre-Lie operad could be seen 
as rooted trees and described the composition of two elements in a purely combinatorial way.

The pre-Lie operad along with the nonassociative permutative operad~\cite{Liv06} are two examples
of interesting operads for which the combinatorial interpretation is given by trees. 
While there
exist also in the literature
interesting operads on graphs \cite{Kreimer:2000ja,Kon99,Wil05,MV19}, none 
of them 
is
studied in a purely combinatorial framework. We propose to do so in this paper by
extending 
the pre-Lie operad 
from trees 
to graphs. 
Within our approach, we find it is more natural to work
on multigraphs and define an operad structure $\MG$ on the species of multigraphs, and an operad structure
$\point{\MG}_{or}$ on the species of pointed oriented multigraphs. The operad $\MG$ in particular
restricts to the Kontsevich-Willwacher operad~\cite{MV19} on the species of graphs $\G$.
We show that these operads and the pre-Lie operad $\PLie$ relate to each other by the following 
commutative diagram:
\begin{equation*}
    \begin{tikzcd}
        \T \arrow[r, "\sim"] \arrow[d, hook]
        & \PLie\cap\K\Operad/\mathcal{I} \arrow[r, equal] \arrow[d, hook]
        & \PLie\cap\Operad \arrow[d,hook] \arrow[r, hook]
        & \PLie \arrow[d,hook]\\
        \G_c \arrow[r, "\sim"] \arrow[d, hook]
        & \Operad\cap\point{\G}_{orc}/\mathcal{I}\cap\point{\G_{orc}} \arrow[d, hook]
        & \point{\G}_{orc}\cap\Operad \arrow[l, two heads] \arrow[r, hook] \arrow[d, hook]
        & \point{\G}_{orc}\cap\ST \arrow[d, hook] \\
        \MG_c \arrow[r, "\sim"]
        & \Operad/\mathcal{I}
        & \Operad \arrow[l, two heads] \arrow[r, hook]
        & \MG\times\PLie
    \end{tikzcd},
\end{equation*}
where $\T$, $\G_c$ and $\MG_c$ are respectively the sub-operads of trees, connected graphs and connected
multigraphs of $\MG$, $\point{\G_{orc}}$ is the sub-operad of pointed connected oriented graphs of $\point{\MG}_{or}$, and
$\ST$ is an operad on spanning trees with $\Operad$ and $\mathcal{I}$ sub-species of $\ST$ generated by
ad hoc sums of spanning trees. 
%
We show that while, contrarily to $\PLie$, the operad $\MG$ does not admit a simple presentation by generators
and relation, its sub-operads generated by combinations of the empty graph over two vertices and
the segment graph, have interesting links with the commutative operad $\Com$ and the magmatic operad $\ComMag$. In particular,
one of them is Koszul,
which allows us to 
exhibit its Koszul dual.
We end our study by providing new constructions on species and operads. These constructions enable us to define a general way of
constructing operads on graphs, which we use to justify the operad structure of $\MG$ and $\point{\MG}_{or}$.
More specifically, we give a general way to construct operads on multigraphs where the partial composition of two 
elements $g_1\comp g_2$ is given by the following steps:
\begin{enumerate}
    \item take the disjoint union of $g_1$ and $g_2$,
    \item remove the vertex $\ast$ from $g_1$, the edges previously connected to $\ast$ have now one or more loose end,
    \item connect \underline{independently} each loose ends of $g_1$ to $g_2$ in a certain way,
\end{enumerate}
\noindent where 
independently 
means here
that the way of connecting one loose end does not depend on the way we connect the other loose ends.

This paper is organized as follows. In Section~\ref{sec_intro} 
we give some general definitions of
species theory
and of operad theory, as well as some general definitions on graphs and multigraphs. In Section~\ref{sec_extending_plie} 
we define the aforementioned two 
operads on multigraphs and pointed oriented multigraphs and we exhibit the link between
these operads and the pre-Lie and Kontsevich-Willwacher operads. We then proceed to study the operad
on multigraphs and some of its sub-operads. Finally, in Section~\ref{sec_graph_operads} we 
exhbit 
our 
proposal 
of constructing species and operads and we apply them in order to obtain our general construction of operads on multigraphs.

This paper is an extended version of the extended abstract for FPSAC 2020~\cite{AGKT20} with proofs and additional results.
%


\section{Context}\label{sec_intro}
In all this paper, otherwise stated, $V$ 
denotes a finite set and $V_1$ and $V_2$ two disjoint sets such that
$V=V_1\sqcup V_2$. The letter $n$ always denotes a non negative integer and we denote by $[n]$ the set $\set{1,\dots, n}$.
All vector spaces appearing in this paper are defined over a field of characteristic 0 denoted by $\K$.
Finally, for any set $E$, 
$\K E$ denotes the free vector space over $E$.


\subsection{Definitions and background on species} \label{sec_species}
We recall here basic definitions on species. We refer the interested reader to~\cite{BLL98} 
for a detailed presentation of combinatorial species.

\begin{definition}
    A {\em linear species} $\spe{S}$ consists of the following data:
    \begin{itemize}
       \item For each finite set $V$, a vector space $\spe{S}[V]$ of finite dimension,
       \item For each bijection of finite sets $\sigma: V\to V'$, a linear map $\spe{S}[\sigma]:\spe{S}[V]\to \spe{S}[V']$.
       These maps should be such that $\spe{S}[\sigma_1\circ\sigma_2] = \spe{S}[\sigma_1]\circ \spe{S}[\sigma_2]$ and $\spe{S}[\id] = \id$.
    \end{itemize}
    Furthermore if $\spe{S}[\emptyset]  = \set{0}$, then $\spe{S}$ is say to be {\em positive}.
\end{definition}

We will use the term {\em species} to refer to linear species. 

When defining a species, the maps $\spe{S}[\sigma]$ are often clear under context and we do not mention them.
Let $\spe{S}$ be a species. A {\em sub-species} of $\spe{S}$ is a species $\spe{R}$ such that $\spe{R}[V]$ is 
a sub-space of $\spe{S}[V]$ for every finite set $V$  and $\spe{R}[\sigma] = \spe{S}[\sigma]$ for every bijection of finite sets $\sigma$. 
For $E$ a set of elements of $\spe{S}$, the {\em species generated by $E$} is the smallest sub-species of $\spe{S}$ containing $E$.
We denote by $\spe{S}_{n}$ the sub-species of $\spe{S}$ defined by $\spe{S}_{n}[V]=\spe{S}[V]$ if $\card{V} = n$ and $\spe{S}_n[V]=\emptyset$ otherwise,
and by $\spe{S}_{n+}$ the sub-species of $\spe{S}$ defined by $\spe{S}_{n+}[V]=\spe{S}[V]$ if $\card{V}\geq n$ and $\spe{S}_{n+}[V]=\set{0}$ otherwise.
We also denote by $\spe{S}_+$ the species $\spe{S}_{1^+}$. The {\em Hilbert series} of $\spe{S}$ is the formal power series defined by 
$\hilbert{\spe{S}}(x)=\sum_{n\geq 0}\frac{\dim\spe{S}[[n]]}{n!}x^n$.

%

A {\em morphism of species} from a species $\spe{R}$ to $\spe{S}$ is a collection of linear maps $f_V : \spe{R}[V] \to \spe{S}[V]$ such that for 
each bijection $\sigma: V\to V'$, we have $f_{V'}\circ \spe{R}[\sigma] = \spe{S}[\sigma]\circ f_V$. For easier reading, we will often forget the 
index $V$.

\begin{example}\label{ex_species}
    \begin{itemize}
        \item The {\em Identity}, {\em exponential} and {\em singleton species} are respectively defined by $\spe{\id}[V] = \K V$, $\Sym[V] = \K\set{V}$
        and $\spe{X} = \Sym_1$.
        \item For $V$ a finite set, we denote by $\spe{Pol}[V]$ the \textit{set} (not the vector space) of polynomials with coefficients in $\N$,
        variables in $V$ and null constant coefficient. To consider the species $\K\spe{Pol}$, we must take into consideration the fact that we need to 
        differentiate the plus of polynomials and the addition of vectors. We will thus denote by $\oplus$ the former and keep $+$ for the latter 
        and we will denote by $0_V\in\spe{Pol}[V]$ the polynomial constant to 0 and keep the notation $0$ for the null vector.
        For example, $ab\oplus c$ is an element of $\spe{Pol}[\{a,b,c\}]$, but $a\oplus b+c$ is a vector in $\K \spe{Pol}[\{a,b,c\}]$.
        \item We have a natural morphism from $\spe{\id}$ to $\K\spe{Pol}$ given by $v\mapsto v$ and three natural morphisms from $\Sym$ to
        $\K\spe{Pol}$ respectively given by $V\mapsto \prod_{v\in V}v$, $V\mapsto\bigoplus_{v\in V}v$ and $V\mapsto\sum_{v\in V}v$. In
        particular, the image of $\spe{\id}$ by this morphism is the species $\K\spe{Pol}^1$ of homogeneous polynomials of degree 1.
    \end{itemize}
\end{example}

One strong point of species is the different operations on them which enable to construct new species 
from existing ones. Let $\spe{R}$ and $\spe{S}$ be two species. We can then construct new species 
which are defined as follows: {\em sum} $(\spe{R}+\spe{S})[V] = \spe{R}[V]\bigoplus \spe{S}[V]$, {\em product}
$\spe{R}\cdot \spe{S}[V] = \bigoplus_{V_1\sqcup V_2 = V} \spe{R}[V_1]\otimes \spe{S}[V_2]$,\linebreak[9] {\em Hadamard product}
$(\spe{R}\times \spe{S})[V] = \spe{R}[V]\otimes \spe{S}[V]$, {\em derivative} $\spe{S}'[V] = \spe{S}[V+\set{\ast}]$ (where
$\ast\not\in V$), {\em second derivative} $\spe{S}''[V] = \spe{S}[V+\set{\ast_1,\ast_2}]$ ($\ast_1,\ast_2\not\in V$) and 
{\em pointing} $\point{\spe{S}}[V] = \spe{S}[V]\otimes \K V$.
Furthermore if $\spe{S}$ is positive we can also define the {\em composition} of $\spe{R}$ and $\spe{S}$ by
$$\spe{R}(\spe{S})[V] = \bigoplus_{\text{$P$ partition of $V$}}\spe{R}[P]\bigotimes_{P_i\in P} \spe{S}[P_i],$$
where $\bigotimes_{P_i\in P} \spe{S}[P_i]=\left(\spe{S}[P_1]\otimes\dots\otimes\spe{S}(P_k)\right)_{\symgrp_k}$ 
should be seen as an unordered tensor product. We call {\em assemblies} of $\spe{S}$ the elements of
this unordered tensor product.

\begin{remark}\label{remark_derivation}
    When considering the product of two derivatives $\spe{R}'\cdot\spe{S}'$, to avoid confusion we will use the notations
    $\spe{R}'[V] = \spe{R}[V+\set{\ast_1}]$ and $\spe{S}'[V] = \spe{S}[V+\set{\ast_2}]$.
\end{remark}


\begin{example}\label{remark_derivation}
    \begin{itemize}
        \item Let $\T$ be the species of trees i.e. $\T[V]$ is the free vector space over abstract 
        trees with vertex set $V$. Then the species $\point{\T}$ is the species of rooted tree. We distinguish the root vertex 
        by changing its shape. For example, the element $\left(\set{\set{a,c},\set{c,b}},c\right)\in \point{\T}[\set{a,b,c}]$ 
        will be represented as follows:
        \begin{equation}
            \begin{tikzpicture}[Centering,scale=.7]
                \node[RootGraph](s)at(0,0){$c$};
                \node[NodeGraph](a)at(1,1){$a$};
                \node[NodeGraph](b)at(1,-1){$b$};
                \draw[EdgeGraph](a)--(s);
                \draw[EdgeGraph](s)--(b);
            \end{tikzpicture}
            \enspace = \enspace\left(
            \begin{tikzpicture}[Centering,scale=.7]
                \node[NodeGraph](s)at(0,0){$c$};
                \node[NodeGraph](a)at(1,1){$a$};
                \node[NodeGraph](b)at(1,-1){$b$};
                \draw[EdgeGraph](a)--(s);
                \draw[EdgeGraph](s)--(b);
            \end{tikzpicture}
            \enspace , \enspace c\right)
        \end{equation}
        \item Since $\Sym[V]$ is always reduced to a space of dimension 1, the species $\Sym(\spe{S})$ can be interpreted as
        the species of assemblies of $\spe{S}$.
    \end{itemize}
\end{example}

A particularly interesting species we can construct using the composition of positive species is the species
of Schröder trees enriched with another species.
For $\spe{S}$ a positive species such that $\spe{S} = \spe{X} + \spe{S}_{2^+}$,
the species $\stree{\spe{S}}$ of {\em Schröder trees enriched with $\spe{S}$} is defined as the species satisfying the equation
\begin{math}
    \stree{\spe{S}} = \spe{X} + \spe{S}_{2^+}(\stree{\spe{S}}).
\end{math}
The vector space $\stree{\spe{S}}[V]$ is then generated by abstract rooted trees with internal vertices labelled by elements of $\spe{S}$ and set of leaves $V$. 
In particular, remark that the internal node have a linear behaviour: if $W$ is a subset of $V$, $x$ and $y$ are two elements of $\spe{S}[W]$ and 
$k$ is an element of $\K$, a tree $t$ with a node labelled by $x+ky$ is equal to the sum $t_1+kt_2$ where $t_1$ and $t_2$ are identical to $t$ except with
the node which was labelled by $x+ky$ in $t$ being respectively labelled by $x$ and $y$.
In this context, the elements of $\spe{S}$ are identified with the corollas of $\stree{\spe{S}}$.

\begin{example}\label{ex_stree}
    Here is an example of an element in $\stree{\spe{S}}[[4]]$ where $\spe{S}[V] = \K{x}$ if $\card{V} = 3$, $\spe{S}[V] = \K{y}$ if $\card{V}=2$ and 
    $\spe{S}[V] = \set{0}$ otherwise.
    \begin{equation}
        \begin{tikzpicture}[Centering, scale=0.6]
            \node[NodeFree, minimum size=8mm](r)at(0,0){$x_{12,3,4}$};
            \node[NodeFree, minimum size=8mm](n)at(-1.5,2.75){$y_{1,2}$};
            \node[NodeFree, minimum size=8mm](s)at(0.25,2.5){$4$};
            \node[NodeFree, minimum size=8mm](3)at(1.75,1.75){$3$};
            \node[NodeFree, minimum size=8mm](1)at(-3.5,5){$1$};
            \node[NodeFree, minimum size=8mm](2)at(-1.5,5.5){$2$};
            \draw[EdgeFree](r)--(n);
            \draw[EdgeFree](r)--(s);
            \draw[EdgeFree](r)--(3);
            \draw[EdgeFree](n)--(1);
            \draw[EdgeFree](n)--(2);
        \end{tikzpicture}
    \end{equation}
\end{example}
%


\subsection{Definitions and background on operads}\label{sec_operads}
We give here basic definitions and results of the theory as well as some classical examples.
We refer the reader to~\cite{Men15} and \cite{LV12} for a more general approach to the theory of operads.

A {\em partial composition} of a species $\spe{S}$ is a collection of linear maps 
$\comp[v]: \spe{S}[V_1]\otimes\spe{S}[V_2]\to \spe{S}[V_1+V_2-\set{v}]$, for $V_1$ and $V_2$ disjoint finite
sets and $v\in V_1$. These maps must be natural: for $\sigma_1$ and $\sigma_2$ bijections with respective domain $V_1$ and $V_2$, we have
$\comp[\sigma_1(v)]\circ\spe{S}[\sigma_1]\otimes\spe{S}[\sigma_2] = \spe{S}[\sigma_1\circ\sigma_2]\circ\comp[v]$. In particular, a partial composition induces a morphism of linear species $\comp:\spe{S}'\cdot\spe{S}\to\spe{S}$, from which we can recover all the maps of the collection by naturality, and
hence can be defined as such a morphism.
\begin{definition}
A {\em symmetric linear operad} is a positive linear species $\Operad$ equipped with an {\em unity} $e: \spe{X}\to \Operad$
and a  {\em partial composition} $\comp:\Operad'\cdot\Operad\to \Operad$ such that the following diagrams commute
\begin{center}\label{def_op}
    \begin{tikzcd}
        \Operad''\cdot\Operad^2 \arrow[r, "{\comp[\ast_1]}"] \arrow[d, "{\comp[\ast_2]}\circ\id\cdot\twist"] & \Operad'\cdot\Operad \arrow[d, "{\comp[\ast_2]}"] & &
        \Operad'\cdot \Operad'\cdot\Operad \arrow[r, "{\comp[\ast_1]}\cdot\id"] \arrow[d, "\id\cdot{\comp[\ast_2]}"] & \Operad'\cdot\Operad \arrow[d, "{\comp[\ast_2]}"] \\
        \Operad'\cdot \Operad \arrow[r, "{\comp[\ast_1]}"] & \Operad & & 
        \Operad'\cdot \Operad \arrow[r, "{\comp[\ast_1]}"] & \Operad
    \end{tikzcd}
\end{center}
\begin{center}
    \begin{tikzcd}
        \Operad'\cdot \K X \arrow[r, "\Operad'\cdot e"] \arrow[rd, "p"] & \Operad'\cdot\Operad \arrow[d, "\comp"] & \K X'\cdot \Operad \arrow[l, "e'\cdot \Operad" swap]\arrow[dl, "\cong" swap]\\
        & \Operad 
    \end{tikzcd}
\end{center}
where $\tau:x\otimes y\mapsto y\otimes x$ and $p_V: x \mapsto \Operad[\sigma](x)$ with $\sigma$ the
bijection that sends $\ast$ on $v$ and is the identity on $V\setminus\{v\}$.
\end{definition}

%


A {\em sub-operad} of an operad $\Operad$ is a sub-species of $\Operad$ containing the image of $e$ and stable under
partial composition. For $\Operad$ an operad, the sub-operad of $\Operad$ {\em generated by} a sub-species $\spe{S}$ is 
the smallest sub-operad of $\Operad$ containing $\spe{S}$ and the sub-operad generated by a set $E$ of elements of $\Operad$ is the
sub-operad generated by the species generated by $E$. A {\em morphism of operads} $f:\Operad_1\to\Operad_2$ is a morphism of species stable under the 
structure maps: $f\circ e = e$ and $f(x\comp y) =f(x)\comp f(y)$.

In practice the map $e$ is often trivial and we do not mention it. Let us now give a series of examples of operads.

\begin{description}\label{ex_operad}
    \item[Identity.] The identity species has a natural operad structure given by $v_1\comp v_2 = v_1$ if $v_1=\ast$ and
    $v_1\comp v_2=v_2$ else.
    \item[Com.] The species $\Sym_+$ has a natural operad structure given by $(V_1+\set{\ast})\comp V_2 = V_1+V_2$.
    This operad is called the {\em commutative operad} and denoted by $\Com$. 
    In this context we denote by $\mu_V=V$ the basis element of $\Com[V]$.
    \item[NAP.] Let be $t_1$ and $t_2$ be two rooted trees and let $t_1\comp t_2$ be the rooted tree obtained by the following operation.
    \begin{enumerate}
        \item Take the disjoint union of $t_1$ and $t_2$

        \item Remove the vertex $\ast$ from $t_1$.

        \item Add an edge between the neighbours of $\ast$ in $t_1$ and the root of $t_2$.
    \end{enumerate}
    This operation is a partial composition and turns the species $\point{\T}$ of rooted trees into an operad. This operad is called {\em non associative
    permutative operad}~\cite{Liv06} and is denoted by $\NAP$. For instance we have:
    \begin{equation}
        \begin{tikzpicture}[Centering,scale=.7]
            \node[NodeGraph](s)at(0,0){$\ast$};
            \node[RootGraph](a)at(1,1){$a$};
            \node[NodeGraph](b)at(1,-1){$b$};
            \draw[EdgeGraph](a)--(s);
            \draw[EdgeGraph](s)--(b);
        \end{tikzpicture}
        \enspace \comp \enspace
        \begin{tikzpicture}[Centering,scale=.7]
            \node[RootGraph](c)at(0,0){$c$};
            \node[NodeGraph](d)at(1,0){$d$};
            \draw[EdgeGraph](c)--(d);
        \end{tikzpicture}
        \enspace = \enspace
        \begin{tikzpicture}[Centering,scale=.7]
            \node[RootGraph](a)at(1,1){$a$};
            \node[NodeGraph](b)at(1,-1){$b$};
            \node[NodeGraph](c)at(0,0){$c$};
            \node[NodeGraph](d)at(2,0){$d$};
            \draw[EdgeGraph](a)--(c);
            \draw[EdgeGraph](c)--(b);
            \draw[EdgeGraph](c)--(d);
        \end{tikzpicture}.
    \end{equation}
    \item[PreLie.]\label{ex_plie} Let be $t_1$ and $t_2$ be two rooted trees and let $t_1\comp t_2$ be the sum over all rooted trees obtained by the following 
    operation.
    \begin{enumerate}
        \item Take the disjoint union of $t_1$ and $t_2$.

        \item Remove the vertex $\ast$ from $t_1$.

        \item Add an edge between the parent of $\ast$ in $t_1$ and the root of $t_2$.

        \item For each child of $\ast$ in $t_1$, add an edge between this vertex and any vertex of $t_2$.
    \end{enumerate}
    This operation  is a partial composition and turns the species $\point{\T}$ into an operad. This operad is called {\em pre-Lie operad}~\cite{CL01}
    and is denoted by $\PLie$. Remark that the partial composition of $t_1$ and $t_2$ as elements of $\NAP$ is always in the support of the
    partial composition of $t_1$ and $t_2$ as elements of $\PLie$. For instance, we have:
    \begin{equation}
        \begin{tikzpicture}[Centering,scale=.7]
            \node[NodeGraph](s)at(0,0){$\ast$};
            \node[RootGraph](a)at(1,1){$a$};
            \node[NodeGraph](b)at(1,-1){$b$};
            \draw[EdgeGraph](a)--(s);
            \draw[EdgeGraph](s)--(b);
        \end{tikzpicture}
        \enspace \comp \enspace
        \begin{tikzpicture}[Centering,scale=.7]
            \node[RootGraph](c)at(0,0){$c$};
            \node[NodeGraph](d)at(1,0){$d$};
            \draw[EdgeGraph](c)--(d);
        \end{tikzpicture}
        \enspace = \enspace
        \begin{tikzpicture}[Centering,scale=.7]
            \node[RootGraph](a)at(1,1){$a$};
            \node[NodeGraph](b)at(1,-1){$b$};
            \node[NodeGraph](c)at(0,0){$c$};
            \node[NodeGraph](d)at(2,0){$d$};
            \draw[EdgeGraph](a)--(c);
            \draw[EdgeGraph](c)--(b);
            \draw[EdgeGraph](c)--(d);
        \end{tikzpicture}
        \enspace + \enspace
        \begin{tikzpicture}[Centering,scale=.7]
            \node[RootGraph](a)at(1,1){$a$};
            \node[NodeGraph](b)at(1,-1){$b$};
            \node[NodeGraph](c)at(0,0){$c$};
            \node[NodeGraph](d)at(2,0){$d$};
            \draw[EdgeGraph](a)--(c);
            \draw[EdgeGraph](d)--(b);
            \draw[EdgeGraph](c)--(d);
        \end{tikzpicture}.
    \end{equation}
     \item[Polynomials.] The species $\K\Poly$ has a natural partial composition given by the composition of polynomials: for 
    $p_1(v_1,\dots,v_k,\ast)$ and $p_2(v_1',\dots,v_l')$ two polynomials over disjoint sets of variables,
    \begin{equation}\label{pol_compositioin}
        (p_1\comp p_2)(v_1,\dots,v_k,v_1',\dots,v_l') = p_1|_{\ast \take p_2}
        = p_1(v_1,\dots, v_{k},p_2(v_1',\dots,v_l')).
    \end{equation}
    One can directly check that this partial composition satisfies the commutative diagrams of Definition~\ref{def_op}. This turns $\K\Poly$ into 
    an operad where the units are the singleton polynomials $v\in\Poly[\set{v}]$. Remark that since $\comp$ is a linear map, the product and
    addition of polynomials act as bilinear maps.
    Restricting the four species morphisms from Example~\ref{ex_species} to $\Sym_+$ and $\K\Poly$ when necessary makes them operad morphisms.
    \item[Hadamard product.] If $\Operad_1$ and $\Operad_2$ are two operads, then the species $\Operad_1\times\Operad_2$ is also an operad with partial composition:
    $(x_1\otimes x_2)\comp (y_1\otimes y_2) =(x_1\comp y_1)\otimes (x_2\comp y_2)$. 
    \item[Assemblies.] For $\Operad$ an operad, the species $\Com(\Operad)$ of assemblies of $\Operad$ has a natural operad structure. Let $V_1,\dots V_n$ and
    $W_1,\dots, W_k$ be $n+k$ disjoint sets such that $\ast\in V_1$. Let be $x_i\in \Operad[V_i]$ for $1\leq i\leq n$ and $y_j\in\Operad[W_j]$ for $1\leq j\leq k$.
    Then the partial composition of the assemblies $x_1\otimes\dots\otimes x_n$ and $y_1\otimes\dots\otimes y_k$ is defined by
    $$(x_1\otimes\dots\otimes x_n) \comp (y_1\otimes\dots\otimes y_k) 
    = \sum_{i=1}^k y_1\otimes\dots\otimes y_{i-1}\otimes x_1\comp y_i \otimes y_{i+1}\otimes\dots\otimes y_k \otimes x_2\otimes\dots\otimes x_n.$$
%
\end{description}

In all the above examples, we defined operads by explicitly describing the vector spaces $\Operad[V]$ and the partial compositions. There is another way to
present an operad which is as a quotient of a free operad.

Let $\spe{S}$ be a positive linear species such that $\spe{S}=\spe{X}+\spe{S}_{2^+}$. 
Recall that $\stree{\spe{S}}[V]$ is generated by trees with internal vertices decorated with elements of $\spe{S}$ and set of leaves $V$.
The species $\stree{\spe{S}}$ has then a natural operad structure given by the grafting of trees.
For $t_1\in\stree{\spe{S}}'[V_1]$ and $t_2\in\stree{\spe{S}}[V_2]$, the partial composition $t_1\comp t_2$ is the tree obtained by grafting $t_2$ on the leaf
$\ast$ of $t_1$ and relabeling the nodes of $t_1$ accordingly. 
This operad is called {\em free operad over $\spe{S}$} and we denote it by $\free{\spe{S}}$.
\begin{example}
We give here an example of partial composition in a free operad over the same species than in Example~\ref{ex_stree}.
    \begin{equation}
        \begin{tikzpicture}[Centering, scale=0.6]
            \node[NodeFree, minimum size=8mm](r)at(0,0){$x_{12,\ast,3}$};
            \node[NodeFree, minimum size=8mm](n)at(-1.5,2.75){$y_{1,2}$};
            \node[NodeFree, minimum size=8mm](s)at(0.25,2.5){$\ast$};
            \node[NodeFree, minimum size=8mm](3)at(1.75,1.75){$3$};
            \node[NodeFree, minimum size=8mm](1)at(-3.5,5){$1$};
            \node[NodeFree, minimum size=8mm](2)at(-1.5,5.5){$2$};
            \draw[EdgeFree](r)--(n);
            \draw[EdgeFree](r)--(s);
            \draw[EdgeFree](r)--(3);
            \draw[EdgeFree](n)--(1);
            \draw[EdgeFree](n)--(2);
        \end{tikzpicture}
        \enspace \comp \enspace
        \begin{tikzpicture}[Centering, scale=0.6]
            \node[NodeFree, minimum size=8mm](r)at(0,0){$y_{4,5}$};
            \node[NodeFree, minimum size=8mm](4)at(-1.5,2.75){$4$};
            \node[NodeFree, minimum size=8mm](5)at(1.75,1.75){$5$};
            \draw[EdgeFree](r)--(4);
            \draw[EdgeFree](r)--(5);
        \end{tikzpicture}
        \enspace = \enspace
        \begin{tikzpicture}[Centering, scale=0.6]
            \node[NodeFree, minimum size=8mm](r)at(0,0){$x_{12,45,3}$};
            \node[NodeFree, minimum size=8mm](n)at(-1.5,2.75){$y_{1,2}$};
            \node[NodeFree, minimum size=8mm](s)at(0.25,2.5){$y_{4,5}$};
            \node[NodeFree, minimum size=8mm](4)at(0,5.25){$4$};
            \node[NodeFree, minimum size=8mm](5)at(2,4){$5$};
            \node[NodeFree, minimum size=8mm](3)at(1.75,1.75){$3$};
            \node[NodeFree, minimum size=8mm](1)at(-3.5,5){$1$};
            \node[NodeFree, minimum size=8mm](2)at(-1.5,5.5){$2$};
            \draw[EdgeFree](r)--(n);
            \draw[EdgeFree](r)--(s);
            \draw[EdgeFree](r)--(3);
            \draw[EdgeFree](n)--(1);
            \draw[EdgeFree](n)--(2);
            \draw[EdgeFree](s)--(4);
            \draw[EdgeFree](s)--(5);
        \end{tikzpicture}
    \end{equation}
\end{example}
%

In the sequel, we consider free operads over species which are sub-species of an operad $\Operad$. When this happens, we denote by $\comp^\xi$ the partial
composition in the free operad in order to not confuse it with the partial composition in $\Operad$.

\begin{example}[$\ComMag$]
    The free operad over one symmetric generator $\free{\Sym_2}$ is the operad of abstract binary trees with partial
    composition the grafting of trees. This operad is called {\em commutative magmatic operad}~\cite{BL11} and is denoted by $\ComMag$. 
    In this context, for $V$ a set of size $2$, we denote by $s_V$ the generating element of $\Sym_2[V]$: $\Sym_2[V]=\free{\Sym_2}[V] = \K s_V$.
\end{example}

An {\em ideal} of an operad $\Operad$ is a sub-species $\mathcal{I}$ such that the image of the products $\Operad'\cdot \mathcal{I}$ and $\mathcal{I}'\cdot\Operad$ 
by the partial composition maps are in $\mathcal{I}$. The {\em quotient species} $\Operad/ \mathcal{I}$ defined by $(\Operad/\mathcal{I})[V] = \Operad[V]/\mathcal{I}[V]$ 
is then an operad with the natural partial composition and unit : $\iso{x}\comp\iso{y}=\iso{x\comp y}$ where $\iso{x}$ is the equivalence class of $x$.
For $\mathcal{G}$ a species, if $\mathcal{R}$ is a sub-species of $\free{\mathcal{G}}$, we denote by $(\mathcal{R})$ the smallest ideal of 
$\free{\mathcal{G}}$ containing $\mathcal{R}$ and write that $(\mathcal{R})$ is {\em generated by $\mathcal{R}$}.

Denote by $\free{\mathcal{G}}^{(2)}$ the sub-species of $\free{\mathcal{G}}$ of trees with two internal node.

\begin{definition}
    Let $\mathcal{G}$ be a species and $\mathcal{R}$ be a sub-species of $\free{\mathcal{G}}$. We denote by 
    $\Ope(\mathcal{G},\mathcal{R})=\free{\mathcal{G}}/(\mathcal{R})$ the operad {\em generated by $\mathcal{G}$ and with relation $\mathcal{R}$}. 
    The operad $\Ope(\mathcal{G},\mathcal{R})$ is {\em binary} if the species $\mathcal{G}$ of {\em generators} is concentrated in cardinality $2$ (i.e. 
    $\mathcal{G} = \mathcal{G}_2$). This operad is {\em quadratic} if the species $\mathcal{R}$ of {\em relations} is a sub-species of 
    $\free{\mathcal{G}}^{(2)}$.
\end{definition}

\begin{example}\label{ex_gen_com} 
    Denote by 
    $\mathcal{R}$ the sub-species of $\ComMag^{(2)}$ generated by the {\em associativity relation} 
    $s_{\set{a,\ast}}\comp^{\xi} s_{\set{b,c}} - s_{\set{c,\ast}}\comp^{\xi} s_{\set{a,b}}$. 
    Then $\Com=\Ope(\Sym_2,\mathcal{R})$ and $\Com$ is hence binary and quadratic. Remark that as a consequence we have that $\mu_V$ is the image of $s_V$
    under the projection $\ComMag\to\Com$.
\end{example}

 Two advantages of defining an operad by its generators and relations are
 that it is possible to construct (under some conditions) its Koszul dual and 
 that it is possible to check if the operad is Koszul.
These notions are too involved
to be presented in a simple reminder and we only refer to them in Proposition~\ref{sp_dual} and Proposition~\ref{sp_koszul}. We refer the reader not versed in 
operad theory to Appendix~\ref{ann_koszul} for more information on these. 

%


\subsection{Graphs and multigraphs}\label{intro_graphs}
A {\em graph} or {\em simple graph} over $V$ is a set of non ordered pairs of distinct elements of $V$ and a {\em multigraph} is a multiset of
non ordered pairs of elements of $V$. In this context, the elements of a (multi)graph are called {\em edges}, the elements of $V$ are called 
vertices and the vertices of an edge are called its {\em ends}. Graphs can be seen as multigraphs with at most one edge between two vertices and 
no {\em loop} edges, which are edges with equal ends. We denote by $\G$ the species of graphs and by $\MG$ the species of multigraphs. 
We have the usual definitions of {\em connectedness} and {\em restriction} to subset of vertices.
We respectively denote by $\G_c$ and by $\MG_c$ the species of connected graphs and connected multigraphs and by $g|_W\in\MG[W]$ the restriction of 
$g\in\MG[V]$ to $W\subseteq V$

The four species presented here as well as the species of trees $\T$ are related by the following diagram:
\begin{equation}\label{graph_diagram}
        \begin{tikzcd}[row sep=scriptsize,column sep=scriptsize]
        \T \arrow[r, hook] & \G_c \arrow[r, hook] \arrow[d, hook] & \MG_c \arrow[d, hook] \\
        & \G \arrow[r, hook] & \MG & 
    \end{tikzcd}
\end{equation}

An {\em orientation} of a multigraph $g$ is a pair of maps $\ori=(\texttt{s},\texttt{t})$ from $g$ to $\p(V)$ such that 
$e = \texttt{s}(e) \cup \texttt{t}(e)$ for any edge $e$ of $g$. The
maps $\texttt{s}$ and $\texttt{t}$ are respectively called {\em source} and {\em target}. An {\em oriented} multigraph is then a pair of a multigraph 
and an orientation. We often represent oriented multigraphs by adding arrow heads to the target ends. 
We add the index $or$ to the five preceding species to designate their oriented counterpart e.g.
$\G_{orc}$ is the species of oriented connected graphs.

Let us finish this presentation of graphs and multigraphs by mentioning that there is an monomorphism $\Phi$ from the species $\MG$ to $\K\spe{Pol}$. It is
defined as follows:
\begin{itemize}\label{graphpol}
    \item the empty graph $\emptyset_V\in \MG[V]$ is sent on the null polynomial $\Phi(\emptyset_V) = 0_V$;
    \item an edge $e=\set{v_1,v_2}$ is sent on the monomial $\Phi(e) = v_1v_2$;
    \item an element $g\in \MG[V]$ is sent on the polynomial $\Phi(g) = \bigoplus_{e\in g} \Phi(e)$.
\end{itemize}

This enables us to see the species of multigraphs as the sub-species $\K\spe{Pol}^2$ of homogeneous polynomials of degree 2. This will be useful in 
the following sections to do computations on multigraphs since it is easier to formally write operations, and in particular composition, on polynomials than 
on multigraphs.

\begin{example}
    With this identification, the following multigraph $g$ writes as the polynomial $\Phi(g) = a^2\oplus ab\oplus ad\oplus be\oplus ef\oplus 2df$.
    \begin{equation}
        \begin{tikzpicture}[Centering,scale=1]
            \tikzset{every loop/.style={}}
            \node[NodeGraph, minimum size = 5mm](d)at(0,0){$d$};
            \node[NodeGraph, minimum size = 5mm](f)at(1,0){$f$};
            \node[NodeGraph, minimum size = 5mm](e)at(1,1.5){$e$};
            \node[NodeGraph, minimum size = 5mm](b)at(0,1.5){$b$};
            \node[NodeGraph, minimum size = 5mm](a)at(-1,0.75){$a$};
            \draw[EdgeGraph](b)--(a);
            \draw[EdgeGraph](d)--(a);
            \draw[EdgeGraph](d)edge[bend right=40](f);
            \draw[EdgeGraph](d)edge[bend left=40](f);
            \draw[EdgeGraph](f)--(e);
            \draw[EdgeGraph](b)--(e);
            \draw[EdgeGraph](a)edge[loop above](a);
        \end{tikzpicture}
    \end{equation}
\end{example}


\section{Extending the pre-Lie operad}\label{sec_extending_plie}
As announced, we want to extend the pre-Lie operad structure to graphs. As we will see later, it is more natural to search an extension to multigraphs.
We recall from Example~\ref{ex_operad} how that the partial composition of $\PLie$ works: for $t_1$ and $t_2$ two rooted trees, $t_1\comp t_2$ the sum 
of all rooted tree obtained as follows:
\begin{enumerate}
        \item take the disjoint union of $t_1$ and $t_2$;

        \item remove the vertex $\ast$ from $t_1$;

        \item add an edge between the parent of $\ast$ in $t_1$ and the root of $t_2$;

        \item for each child of $\ast$ in $t_1$, add an edge between this vertex and any vertex of $t_2$.
\end{enumerate}


\subsection{Two canonical operads}\label{subsec_cano_op}

If we want to extends the above construction to multigraphs, we are faced with the problem that multigraphs do not have a root vertex.
Our first solution is then to decide that it is more natural to try to extend it
to the pointed multigraphs $\point{\MG}$. But this alone is not enough: indeed the partial composition of $\PLie$ also use the notion of parent and
child vertex/end which can not be defined on a general pointed multigraph. To replace this, we will consider oriented pointed multigraphs, where the
targets will play the same role than the parents ends in $\PLie$ and the sources the same role than the children ends.
Let then be $(g_1, v_1) \in (\point{\MG_{or}})'[V_1]$ and $(g_2, v_2) \in\point{\MG_{or}}[V_2]$ and define the partial composition $(g_1,v_1)\comp (g_2,v_2)$ 
as the sum over all elements obtained as follows:
\begin{enumerate}
    \item take the disjoint union of $g_1$ and $g_2$;

    \item remove the vertex $\ast$, we then have some edges with a loose end;

    \item connect each loose source end to $v_2$;

    \item connect each loose target end to any vertex in $V_2$;

    \item the new root is $v_1\comp v_2$, with the partial composition of the identity operad.
\end{enumerate}
For instance, we have:
\begin{equation}\label{canoor}
    \begin{tikzpicture}[Centering,scale=.7]
        \node[NodeGraph](s)at(0,0){$\ast$};
        \node[RootGraph](a)at(1,1){$a$};
        \node[NodeGraph](b)at(1,-1){$b$};
        \draw[ArcGraph](a)--(s);
        \draw[ArcGraph](s)--(b);
    \end{tikzpicture}
    \enspace \comp \enspace
    \begin{tikzpicture}[Centering,scale=1]
        \node[RootGraph](c)at(0,0){$c$};
        \node[NodeGraph](d)at(1,0){$d$};
        \draw[ArcGraph](c)--(d);
    \end{tikzpicture}
    \enspace = \enspace
    \begin{tikzpicture}[Centering,scale=.7]
        \node[RootGraph](a)at(1,1){$a$};
        \node[NodeGraph](b)at(1,-1){$b$};
        \node[NodeGraph](c)at(0,0){$c$};
        \node[NodeGraph](d)at(2,0){$d$};
        \draw[ArcGraph](a)--(c);
        \draw[ArcGraph](c)--(b);
        \draw[ArcGraph](c)--(d);
    \end{tikzpicture}
    \enspace + \enspace
    \begin{tikzpicture}[Centering,scale=.7]
        \node[RootGraph](a)at(1,1){$a$};
        \node[NodeGraph](b)at(1,-1){$b$};
        \node[NodeGraph](c)at(0,0){$c$};
        \node[NodeGraph](d)at(2,0){$d$};
        \draw[ArcGraph](a)--(d);
        \draw[ArcGraph](c)--(d);
        \draw[ArcGraph](c)--(b);
    \end{tikzpicture}.
\end{equation}

\begin{theorem}\label{th_op_graph_or}
    The species $\point{\MG_{or}}$, endowed with the preceding partial composition, is an operad.
\end{theorem}

We give a proof of this theorem in Section~\ref{sec_graph_operads} where we provide a general way to define operads on graphs and multigraphs.

It is straightforward to note that the subspecies of connected components $\point{\MG_{orc}}$ and the species $\point{\G_{or}}$ are sub-operads of $\point{\MG_{or}}$ and 
that $\point{\G_{orc}}$ is a sub-operad of $\point{\G_{or}}$. 
Given a rooted tree $t$ with root $r$, we have a natural orientation which consist of choosing the parent ends as targets and child ends as sources. 
We denote by $\ori_r=(\texttt{s}_r, \texttt{t}_r)$ this orientation and $t_r = (t, \ori_r)$ the associated oriented tree. This induces an operad monomorphism 
$(t,r)\mapsto(t_r,r)$ from $\PLie$ to $\point{\G_{orc}}$ and hence makes $\PLie$ a sub-operad of $\point{\G}_{orc}$.

\medskip

Our second solution in extending $\PLie$ is to ignore the third step of the construction of the partial composition which involves the root. The resulting
partial composition is then much more natural than the previous one over $\point{\MG}_{or}$:
let $g_1\in\MG'[V_1]$ and $g_2\in\MG[V_2]$ be two multigraphs and define the partial composition $g_1\comp g_2$ as the sum over all multigraphs obtained as follows:
\begin{enumerate}
        \item take the disjoint union of $g_1$ and $g_2$;

        \item remove the vertex $\ast$ from $g_1$;

        \item for each neighbour of $\ast$ in $g_1$, add an edge between this vertex and any vertex of $g_2$.
\end{enumerate}
For instance, we have:
\begin{equation}\begin{split}
    \begin{tikzpicture}[Centering,scale=1]
        \tikzset{every loop/.style={}}
        \node[NodeGraph](a)at(0,0){$a$};
        \node[NodeGraph](s)at(1,0){$\ast$};
        \draw[EdgeGraph](a)edge[bend left=40](s);
        \draw[EdgeGraph](a)edge[bend right=40](s);
        \draw[EdgeGraph](s)edge[loop above](s);
        \draw[EdgeGraph,draw=ColWhite](s)edge[loop below](s);
    \end{tikzpicture}
    \enspace \comp \enspace
    \begin{tikzpicture}[Centering,scale=1]
        \tikzset{every loop/.style={}}
        \node[NodeGraph](b)at(0,0){$b$};
        \node[NodeGraph](c)at(1,0){$c$};
        \draw[EdgeGraph](b)--(c);
        \draw[EdgeGraph](c)edge[loop above](c);
        \draw[EdgeGraph,draw=ColWhite](s)edge[loop below](s);
    \end{tikzpicture}
    & \enspace = \enspace
    \begin{tikzpicture}[Centering,scale=1]
        \tikzset{every loop/.style={}}
        \node[NodeGraph](a)at(0,0){$a$};
        \node[NodeGraph](b)at(1,0){$b$};
        \node[NodeGraph](c)at(2,0){$c$}; 
        \draw[EdgeGraph](a)edge[bend left=40](b);
        \draw[EdgeGraph](a)edge[bend right=40](b);
        \draw[EdgeGraph](b)edge[loop below](b);
        \draw[EdgeGraph](b)--(c);
        \draw[EdgeGraph](c)edge[loop above](c);
    \end{tikzpicture}
    \enspace + \enspace
    \begin{tikzpicture}[Centering,scale=1]
        \tikzset{every loop/.style={}}
        \node[NodeGraph](a)at(0,0){$a$};
        \node[NodeGraph](b)at(1,0){$b$};
        \node[NodeGraph](c)at(2,0){$c$};
        \draw[EdgeGraph](a)edge[bend left=40](b);
        \draw[EdgeGraph](a)edge[bend right=40](b);
        \draw[EdgeGraph](c)edge[loop below](c);
        \draw[EdgeGraph](b)--(c);
        \draw[EdgeGraph](c)edge[loop above](c);
    \end{tikzpicture}
    \enspace + \enspace
    2\,
    \begin{tikzpicture}[Centering,scale=1]
        \tikzset{every loop/.style={}}
        \node[NodeGraph](a)at(0,0){$a$};
        \node[NodeGraph](b)at(1,0){$b$};
        \node[NodeGraph](c)at(2,0){$c$};
        \draw[EdgeGraph](a)edge[bend left=40](b);
        \draw[EdgeGraph](a)edge[bend right=40](b);
        \draw[EdgeGraph](b)edge[bend left=40](c);
        \draw[EdgeGraph](b)edge[bend right=40](c);
        \draw[EdgeGraph](c)edge[loop above](c);
        \draw[EdgeGraph,draw=ColWhite](c)edge[loop below](c);
    \end{tikzpicture}
    \\
    & \quad + \enspace
    2\,
    \begin{tikzpicture}[Centering,scale=1]
        \tikzset{every loop/.style={}}
        \node[NodeGraph](a)at(0,0){$a$};
        \node[NodeGraph](b)at(1,0){$b$};
        \node[NodeGraph](c)at(2,0){$c$};
        \draw[EdgeGraph](a)--(b);
        \draw[EdgeGraph](a)edge[bend right=40](c);
        \draw[EdgeGraph](b)edge[loop above](b);
        \draw[EdgeGraph](b)--(c);
        \draw[EdgeGraph](c)edge[loop above](c);
    \end{tikzpicture}
    \enspace + \enspace
    2\,
    \begin{tikzpicture}[Centering,scale=1]
        \tikzset{every loop/.style={}}
        \node[NodeGraph](a)at(0,0){$a$};
        \node[NodeGraph](b)at(1,0){$b$};
        \node[NodeGraph](c)at(2,0){$c$};
        \draw[EdgeGraph](a)--(b);
        \draw[EdgeGraph](a)edge[bend right=40](c);
        \draw[EdgeGraph](c)edge[loop below](c);
        \draw[EdgeGraph](b)--(c);
        \draw[EdgeGraph](c)edge[loop above](c);
    \end{tikzpicture}
    \enspace + \enspace
    4\,
    \begin{tikzpicture}[Centering,scale=1]
        \tikzset{every loop/.style={}}
        \node[NodeGraph](a)at(0,0){$a$};
        \node[NodeGraph](b)at(1,0){$b$};
        \node[NodeGraph](c)at(2,0){$c$};
        \draw[EdgeGraph](a)--(b);
        \draw[EdgeGraph](a)edge[bend right=40](c);
        \draw[EdgeGraph](b)edge[bend right=40](c);
        \draw[EdgeGraph](b)edge[bend left=40](c);
        \draw[EdgeGraph](c)edge[loop above](c);
    \end{tikzpicture}
    \\
    & \quad + \enspace
    \begin{tikzpicture}[Centering,scale=1]
        \tikzset{every loop/.style={}}
        \node[NodeGraph](a)at(0,0){$a$};
        \node[NodeGraph](b)at(1,0){$b$};
        \node[NodeGraph](c)at(2,0){$c$};
        \draw[EdgeGraph](a)edge[bend right=40](c);
        \draw[EdgeGraph](a)edge[bend left=40](c);
        \draw[EdgeGraph](b)edge[loop left](b);
        \draw[EdgeGraph](b)--(c);
        \draw[EdgeGraph](c)edge[loop above](c);
    \end{tikzpicture}
    \enspace + \enspace
    \begin{tikzpicture}[Centering,scale=1]
        \tikzset{every loop/.style={}}
        \node[NodeGraph](a)at(0,0){$a$};
        \node[NodeGraph](b)at(1,0){$b$};
        \node[NodeGraph](c)at(2,0){$c$};
        \draw[EdgeGraph](a)edge[bend right=40](c);
        \draw[EdgeGraph](a)edge[bend left=40](c);
        \draw[EdgeGraph](c)edge[loop below](c);
        \draw[EdgeGraph](b)--(c);
        \draw[EdgeGraph](c)edge[loop above](c);
    \end{tikzpicture}
    \enspace + \enspace
    2\,
    \begin{tikzpicture}[Centering,scale=1]
        \tikzset{every loop/.style={}}
        \node[NodeGraph](a)at(0,0){$a$};
        \node[NodeGraph](b)at(1,0){$b$};
        \node[NodeGraph](c)at(2,0){$c$};
        \draw[EdgeGraph](a)edge[bend right=40](c);
        \draw[EdgeGraph](a)edge[bend left=40](c);
        \draw[EdgeGraph](b)edge[bend left=20](c);
        \draw[EdgeGraph](b)edge[bend right=20](c);
        \draw[EdgeGraph](c)edge[loop above](c);
    \end{tikzpicture}.
\end{split}\end{equation}

\begin{theorem} \label{th_graphop_cano}
    The species $\MG$ endowed with the preceding partial composition is an operad.
\end{theorem}

As for Theorem~\ref{th_op_graph_or}, we give a proof of this theorem in Section~\ref{sec_graph_operads}.

This operad structure makes all the species of the diagram~\ref{graph_diagram} operads and its maps operad monomorphisms. In particular, we recover 
the Kontsevich-Willwacher operad~\cite{MV19} on $\G$. Recall now from Section~\ref{sec_intro} that we can identify multigraphs with polynomials. The partial 
composition we just defined can then be formally written as $g_1\comp g_2 = g_1|_{\ast \take \sum V_2}\oplus g_2$ (using the same notation for the
composition of polynomials than in \eqref{pol_compositioin}), which can then be expanded as follows:
\begin{equation}\begin{split}\label{comp_multigraphs}
    g_1\comp g_2 &= g_1|_{\ast \take \sum V_2}\oplus g_2\\
    &= g_1|_{V_1} \oplus\bigoplus_{v\in n(\ast)} v(\sum V_2)\oplus((\sum V_2)^2)^{\oplus g_1(\ast\ast)}\oplus g_2 \\
    &= \sum_{f:n(\ast)\to V_2}\sum_{l:[g_1(\ast\ast)]\to V_2V_2} g_1|_{V_1}\oplus\bigoplus_{v\in n(\ast)}vf(v)\oplus\bigoplus_{i=1}^{g_1(\ast\ast)}l(i) \oplus g_2,
\end{split}\end{equation}
where $n(\ast)$ is the multiset of neighbours of $\ast$ in $g_1$ and $g_1(\ast\ast)$ is the number of loops on $\ast$ in $g_1$. Each of the three terms 
of the second line, without counting $g_2$, have a combinatorial interpretation: $g_1|_{V_1}$ is $g_1$ to which we removed $\ast$, $\bigoplus_{n\in n(\ast)} v(\sum V_2)$
can be understood as ``for all vertices in $n(\ast)$, sum over the ways of connecting it to $g_2$'' and the term $((\sum V_2)^2)^{\oplus g_1(\ast\ast)}$ as ``for
each loop over $\ast$, add an edge between any two elements of $V_2$''. This partial composition expands in a simpler way on $\G$ because of the absence of loops. 
Indeed, if $g_1$ and $g_2$ are now graphs, Equ.~\eqref{comp_multigraphs} rewrites as
\begin{equation}\begin{split}
    g_1\comp g_2 &= g_1|_{\ast \take \sum V_2}\oplus g_2\\
    &= g_1|_{V_1} \oplus\bigoplus_{n\in n(\ast)} v(\sum V_2)\oplus g_2 \\
    &= \sum_{f:n(\ast)\rightarrow V_2} g_1|_{V_1}\oplus\bigoplus_{v\in n(\ast)}vf(v)\oplus g_2.
\end{split}\end{equation}
For instance, we have:
\begin{equation}
    \begin{tikzpicture}[Centering,scale=.5]
        \node[NodeGraph](a)at(1,1){$a$};
        \node[NodeGraph](s)at(0,0){$\ast$};
        \node[NodeGraph](b)at(1,-1){$b$};
        \draw[EdgeGraph](a)--(s);
        \draw[EdgeGraph](s)--(b);
    \end{tikzpicture}
    \enspace \comp \enspace
    \begin{tikzpicture}[Centering,scale=.7]
        \node[NodeGraph](c)at(0,0){$c$};
        \node[NodeGraph](d)at(1,0){$d$};
        \draw[EdgeGraph](c)--(d);
    \end{tikzpicture}
    \enspace = \enspace
    \begin{tikzpicture}[Centering,scale=.5]
        \node[NodeGraph](a)at(1,1){$a$};
        \node[NodeGraph](b)at(1,-1){$b$};
        \node[NodeGraph](c)at(0,0){$c$};
        \node[NodeGraph](d)at(2,0){$d$};
        \draw[EdgeGraph](c)--(a);
        \draw[EdgeGraph](c)--(d);
        \draw[EdgeGraph](c)--(b);
    \end{tikzpicture}
    \enspace + \enspace
    \begin{tikzpicture}[Centering,scale=.5]
        \node[NodeGraph](a)at(1,1){$a$};
        \node[NodeGraph](b)at(1,-1){$b$};
        \node[NodeGraph](c)at(0,0){$c$};
        \node[NodeGraph](d)at(2,0){$d$};
        \draw[EdgeGraph](c)--(d);
        \draw[EdgeGraph](d)--(a);
        \draw[EdgeGraph](c)--(b);
    \end{tikzpicture}
    \enspace + \enspace
    \begin{tikzpicture}[Centering,scale=.5]
        \node[NodeGraph](a)at(1,1){$a$};
        \node[NodeGraph](b)at(1,-1){$b$};
        \node[NodeGraph](c)at(0,0){$c$};
        \node[NodeGraph](d)at(2,0){$d$};
        \draw[EdgeGraph](c)--(d);
        \draw[EdgeGraph](c)--(a);
        \draw[EdgeGraph](d)--(b);
    \end{tikzpicture}
    \enspace + \enspace
    \begin{tikzpicture}[Centering,scale=.5]
        \node[NodeGraph](a)at(1,1){$a$};
        \node[NodeGraph](b)at(1,-1){$b$};
        \node[NodeGraph](c)at(0,0){$c$};
        \node[NodeGraph](d)at(2,0){$d$};
        \draw[EdgeGraph](c)--(d);
        \draw[EdgeGraph](d)--(a);
        \draw[EdgeGraph](d)--(b);
    \end{tikzpicture}.
\end{equation}
In particular, we observe that all graphs appearing in $g_1 \comp g_2$ have $1$ as coefficient.


\subsection{Link with $\PLie$}
While $\PLie$ is a sub-operad of $\point{\MG}_{or}$, the operad structure on $\MG$ does not seems to keep any relation with $\PLie$. In fact, as we will see at the
end of this subsection, there is a non trivial link between the four operads $\point{\MG}_{or}$, $\MG$, $\PLie$, and the Kontsevich-Willwacher operad $\G$.
Let us begin with the following result which gives a link between the sub-operad $\T$, of $\MG$ and $\PLie$.

\begin{proposition} \label{prop_prelie}
    The monomorphism of species $\psi : \T \to \point{\T}$ defined by, for any tree $t \in \T[V]$,
    \begin{equation}
        \psi(t) =  \sum_{r \in V} (t, r),
    \end{equation}
    is a monomorphism of operads from $\T$ to $\PLie$.
\end{proposition}

Before giving the proof of this proposition, we illustrate it on an example:
\begin{equation}\begin{split}
    \psi&\left(\enspace
    \begin{tikzpicture}[Centering, scale=0.6]
        \node[NodeGraph, minimum size=3mm](a)at(0,0){$a$};
        \node[NodeGraph, minimum size=3mm](b)at(0.5,0.86){$b$};
        \node[NodeGraph, minimum size=3mm](s)at(1,0){$\ast$};
        \draw[EdgeGraph](a)--(b);
        \draw[EdgeGraph](b)--(s);
    \end{tikzpicture}
    \enspace\right) \comp
    \psi\left(\enspace
    \begin{tikzpicture}[Centering, scale=0.6]
        \node[NodeGraph, minimum size=3mm](c)at(0,1){$c$};
        \node[NodeGraph, minimum size=3mm](d)at(0,0){$d$};
        \draw[EdgeGraph](c)--(d);
    \end{tikzpicture}
    \enspace\right)=\left(\enspace
    \begin{tikzpicture}[Centering, scale=0.6]
        \node[RootGraph, minimum size=3mm](a)at(0,0){$a$};
        \node[NodeGraph, minimum size=3mm](b)at(0.5,0.86){$b$};
        \node[NodeGraph, minimum size=3mm](s)at(1,0){$\ast$};
        \draw[EdgeGraph](a)--(b);
        \draw[EdgeGraph](b)--(s);
    \end{tikzpicture}
    \enspace + \enspace
    \begin{tikzpicture}[Centering, scale=0.6]
        \node[NodeGraph, minimum size=3mm](a)at(0,0){$a$};
        \node[RootGraph, minimum size=3mm](b)at(0.5,0.86){$b$};
        \node[NodeGraph, minimum size=3mm](s)at(1,0){$\ast$};
        \draw[EdgeGraph](a)--(b);
        \draw[EdgeGraph](b)--(s);
    \end{tikzpicture}
    \enspace + \enspace
    \begin{tikzpicture}[Centering, scale=0.6]
        \node[NodeGraph, minimum size=3mm](a)at(0,0){$a$};
        \node[NodeGraph, minimum size=3mm](b)at(0.5,0.86){$b$};
        \node[RootGraph, minimum size=3mm](s)at(1,0){$\ast$};
        \draw[EdgeGraph](a)--(b);
        \draw[EdgeGraph](b)--(s);
    \end{tikzpicture}
    \enspace\right)\comp\left(\enspace
    \begin{tikzpicture}[Centering, scale=0.6]
        \node[RootGraph, minimum size=3mm](c)at(0,1){$c$};
        \node[NodeGraph, minimum size=3mm](d)at(0,0){$d$};
        \draw[EdgeGraph](c)--(d);
    \end{tikzpicture}
    \enspace + \enspace
    \begin{tikzpicture}[Centering, scale=0.6]
        \node[NodeGraph, minimum size=3mm](c)at(0,0){$c$};
        \node[RootGraph, minimum size=3mm](d)at(0,1){$d$};
        \draw[EdgeGraph](c)--(d);
    \end{tikzpicture}
    \enspace\right) \\
    &= 
    \begin{tikzpicture}[Centering, scale=0.6]
        \node[RootGraph, minimum size=3mm](a)at(0,0){$a$};
        \node[NodeGraph, minimum size=3mm](b)at(0.5,0.86){$b$};
        \node[NodeGraph, minimum size=3mm](s)at(1,0){$\ast$};
        \draw[EdgeGraph](a)--(b);
        \draw[EdgeGraph](b)--(s);
    \end{tikzpicture}
    \comp\left(\enspace
    \begin{tikzpicture}[Centering, scale=0.6]
        \node[RootGraph, minimum size=3mm](c)at(0,1){$c$};
        \node[NodeGraph, minimum size=3mm](d)at(0,0){$d$};
        \draw[EdgeGraph](c)--(d);
    \end{tikzpicture}
    \enspace + \enspace
    \begin{tikzpicture}[Centering, scale=0.6]
        \node[NodeGraph, minimum size=3mm](c)at(0,0){$c$};
        \node[RootGraph, minimum size=3mm](d)at(0,1){$d$};
        \draw[EdgeGraph](c)--(d);
    \end{tikzpicture}
    \enspace\right)
    \enspace + \enspace
    \begin{tikzpicture}[Centering, scale=0.6]
        \node[NodeGraph, minimum size=3mm](a)at(0,0){$a$};
        \node[RootGraph, minimum size=3mm](b)at(0.5,0.86){$b$};
        \node[NodeGraph, minimum size=3mm](s)at(1,0){$\ast$};
        \draw[EdgeGraph](a)--(b);
        \draw[EdgeGraph](b)--(s);
    \end{tikzpicture}
    \comp\left(\enspace
    \begin{tikzpicture}[Centering, scale=0.6]
        \node[RootGraph, minimum size=3mm](c)at(0,1){$c$};
        \node[NodeGraph, minimum size=3mm](d)at(0,0){$d$};
        \draw[EdgeGraph](c)--(d);
    \end{tikzpicture}
    \enspace + \enspace
    \begin{tikzpicture}[Centering, scale=0.6]
        \node[NodeGraph, minimum size=3mm](c)at(0,0){$c$};
        \node[RootGraph, minimum size=3mm](d)at(0,1){$d$};
        \draw[EdgeGraph](c)--(d);
    \end{tikzpicture}
    \enspace\right)
    \enspace + \enspace
    \begin{tikzpicture}[Centering, scale=0.6]
        \node[NodeGraph, minimum size=3mm](a)at(0,0){$a$};
        \node[NodeGraph, minimum size=3mm](b)at(0.5,0.86){$b$};
        \node[RootGraph, minimum size=3mm](s)at(1,0){$\ast$};
        \draw[EdgeGraph](a)--(b);
        \draw[EdgeGraph](b)--(s);
    \end{tikzpicture}
    \comp\left(\enspace
    \begin{tikzpicture}[Centering, scale=0.6]
        \node[RootGraph, minimum size=3mm](c)at(0,1){$c$};
        \node[NodeGraph, minimum size=3mm](d)at(0,0){$d$};
        \draw[EdgeGraph](c)--(d);
    \end{tikzpicture}
    \enspace + \enspace
    \begin{tikzpicture}[Centering, scale=0.6]
        \node[NodeGraph, minimum size=3mm](c)at(0,0){$c$};
        \node[RootGraph, minimum size=3mm](d)at(0,1){$d$};
        \draw[EdgeGraph](c)--(d);
    \end{tikzpicture}
    \enspace\right) \\
    &=
    \begin{tikzpicture}[Centering, scale=0.6]
        \node[RootGraph, minimum size=3mm](a)at(0,0){$a$};
        \node[NodeGraph, minimum size=3mm](b)at(0.5,0.86){$b$};
        \node[NodeGraph, minimum size=3mm](c)at(1,0){$c$};
        \draw[EdgeGraph](a)--(b);
        \draw[EdgeGraph](b)--(c);
        \node[NodeGraph, minimum size=3mm](d)at(1,-1){$d$};
        \draw[EdgeGraph](c)--(d);
    \end{tikzpicture}
    \enspace + \enspace
    \begin{tikzpicture}[Centering, scale=0.6]
        \node[RootGraph, minimum size=3mm](a)at(0,0){$a$};
        \node[NodeGraph, minimum size=3mm](b)at(0.5,0.86){$b$};
        \node[NodeGraph, minimum size=3mm](c)at(1,0){$d$};
        \draw[EdgeGraph](a)--(b);
        \draw[EdgeGraph](b)--(c);
        \node[NodeGraph, minimum size=3mm](d)at(1,-1){$c$};
        \draw[EdgeGraph](c)--(d);
    \end{tikzpicture}
    \enspace + \enspace
    \begin{tikzpicture}[Centering, scale=0.6]
        \node[NodeGraph, minimum size=3mm](a)at(0,0){$a$};
        \node[RootGraph, minimum size=3mm](b)at(0.5,0.86){$b$};
        \node[NodeGraph, minimum size=3mm](c)at(1,0){$c$};
        \draw[EdgeGraph](a)--(b);
        \draw[EdgeGraph](b)--(c);
        \node[NodeGraph, minimum size=3mm](d)at(1,-1){$d$};
        \draw[EdgeGraph](c)--(d);
    \end{tikzpicture}
    \enspace + \enspace
    \begin{tikzpicture}[Centering, scale=0.6]
        \node[NodeGraph, minimum size=3mm](a)at(0,0){$a$};
        \node[RootGraph, minimum size=3mm](b)at(0.5,0.86){$b$};
        \node[NodeGraph, minimum size=3mm](c)at(1,0){$d$};
        \draw[EdgeGraph](a)--(b);
        \draw[EdgeGraph](b)--(c);
        \node[NodeGraph, minimum size=3mm](d)at(1,-1){$c$};
        \draw[EdgeGraph](c)--(d);
    \end{tikzpicture}
    \enspace + \enspace
    \begin{tikzpicture}[Centering, scale=0.6]
        \node[NodeGraph, minimum size=3mm](a)at(0,0){$a$};
        \node[NodeGraph, minimum size=3mm](b)at(0.5,0.86){$b$};
        \node[RootGraph, minimum size=3mm](c)at(1,0){$c$};
        \draw[EdgeGraph](a)--(b);
        \draw[EdgeGraph](b)--(c);
        \node[NodeGraph, minimum size=3mm](d)at(1,-1){$d$};
        \draw[EdgeGraph](c)--(d);
    \end{tikzpicture}
    \enspace + \enspace
    \begin{tikzpicture}[Centering, scale=0.6]
        \node[NodeGraph, minimum size=3mm](a)at(0,0){$a$};
        \node[NodeGraph, minimum size=3mm](b)at(0.5,0.86){$b$};
        \node[NodeGraph, minimum size=3mm](c)at(1,0){$d$};
        \draw[EdgeGraph](a)--(b);
        \draw[EdgeGraph](b)--(c);
        \node[RootGraph, minimum size=3mm](d)at(1,-1){$c$};
        \draw[EdgeGraph](c)--(d);
    \end{tikzpicture}
    \enspace + \enspace
    \begin{tikzpicture}[Centering, scale=0.6]
        \node[NodeGraph, minimum size=3mm](a)at(0,0){$a$};
        \node[NodeGraph, minimum size=3mm](b)at(0.5,0.86){$b$};
        \node[RootGraph, minimum size=3mm](c)at(1,0){$d$};
        \draw[EdgeGraph](a)--(b);
        \draw[EdgeGraph](b)--(c);
        \node[NodeGraph, minimum size=3mm](d)at(1,-1){$c$};
        \draw[EdgeGraph](c)--(d);
    \end{tikzpicture}
    \enspace + \enspace
    \begin{tikzpicture}[Centering, scale=0.6]
        \node[NodeGraph, minimum size=3mm](a)at(0,0){$a$};
        \node[NodeGraph, minimum size=3mm](b)at(0.5,0.86){$b$};
        \node[NodeGraph, minimum size=3mm](c)at(1,0){$c$};
        \draw[EdgeGraph](a)--(b);
        \draw[EdgeGraph](b)--(c);
        \node[RootGraph, minimum size=3mm](d)at(1,-1){$d$};
        \draw[EdgeGraph](c)--(d);
    \end{tikzpicture} \\
    &=\left(\enspace
    \begin{tikzpicture}[Centering, scale=0.6]
        \node[RootGraph, minimum size=3mm](a)at(0,0){$a$};
        \node[NodeGraph, minimum size=3mm](b)at(0.5,0.86){$b$};
        \node[NodeGraph, minimum size=3mm](c)at(1,0){$c$};
        \draw[EdgeGraph](a)--(b);
        \draw[EdgeGraph](b)--(c);
        \node[NodeGraph, minimum size=3mm](d)at(1,-1){$d$};
        \draw[EdgeGraph](c)--(d);
    \end{tikzpicture}
    \enspace + \enspace
    \begin{tikzpicture}[Centering, scale=0.6]
        \node[NodeGraph, minimum size=3mm](a)at(0,0){$a$};
        \node[RootGraph, minimum size=3mm](b)at(0.5,0.86){$b$};
        \node[NodeGraph, minimum size=3mm](c)at(1,0){$c$};
        \draw[EdgeGraph](a)--(b);
        \draw[EdgeGraph](b)--(c);
        \node[NodeGraph, minimum size=3mm](d)at(1,-1){$d$};
        \draw[EdgeGraph](c)--(d);
    \end{tikzpicture}
    \enspace + \enspace
    \begin{tikzpicture}[Centering, scale=0.6]
        \node[NodeGraph, minimum size=3mm](a)at(0,0){$a$};
        \node[NodeGraph, minimum size=3mm](b)at(0.5,0.86){$b$};
        \node[RootGraph, minimum size=3mm](c)at(1,0){$c$};
        \draw[EdgeGraph](a)--(b);
        \draw[EdgeGraph](b)--(c);
        \node[NodeGraph, minimum size=3mm](d)at(1,-1){$d$};
        \draw[EdgeGraph](c)--(d);
    \end{tikzpicture}
    \enspace + \enspace
    \begin{tikzpicture}[Centering, scale=0.6]
        \node[NodeGraph, minimum size=3mm](a)at(0,0){$a$};
        \node[NodeGraph, minimum size=3mm](b)at(0.5,0.86){$b$};
        \node[NodeGraph, minimum size=3mm](c)at(1,0){$c$};
        \draw[EdgeGraph](a)--(b);
        \draw[EdgeGraph](b)--(c);
        \node[RootGraph, minimum size=3mm](d)at(1,-1){$d$};
        \draw[EdgeGraph](c)--(d);
    \end{tikzpicture}
    \enspace\right)
    +
    \left(\enspace
    \begin{tikzpicture}[Centering, scale=0.6]
        \node[RootGraph, minimum size=3mm](a)at(0,0){$a$};
        \node[NodeGraph, minimum size=3mm](b)at(0.5,0.86){$b$};
        \node[NodeGraph, minimum size=3mm](c)at(1,0){$d$};
        \draw[EdgeGraph](a)--(b);
        \draw[EdgeGraph](b)--(c);
        \node[NodeGraph, minimum size=3mm](d)at(1,-1){$c$};
        \draw[EdgeGraph](c)--(d);
    \end{tikzpicture}
    \enspace + \enspace
    \begin{tikzpicture}[Centering, scale=0.6]
        \node[NodeGraph, minimum size=3mm](a)at(0,0){$a$};
        \node[RootGraph, minimum size=3mm](b)at(0.5,0.86){$b$};
        \node[NodeGraph, minimum size=3mm](c)at(1,0){$d$};
        \draw[EdgeGraph](a)--(b);
        \draw[EdgeGraph](b)--(c);
        \node[NodeGraph, minimum size=3mm](d)at(1,-1){$c$};
        \draw[EdgeGraph](c)--(d);
    \end{tikzpicture}
    \enspace + \enspace
    \begin{tikzpicture}[Centering, scale=0.6]
        \node[NodeGraph, minimum size=3mm](a)at(0,0){$a$};
        \node[NodeGraph, minimum size=3mm](b)at(0.5,0.86){$b$};
        \node[RootGraph, minimum size=3mm](c)at(1,0){$d$};
        \draw[EdgeGraph](a)--(b);
        \draw[EdgeGraph](b)--(c);
        \node[NodeGraph, minimum size=3mm](d)at(1,-1){$c$};
        \draw[EdgeGraph](c)--(d);
    \end{tikzpicture}
    \enspace + \enspace
    \begin{tikzpicture}[Centering, scale=0.6]
        \node[NodeGraph, minimum size=3mm](a)at(0,0){$a$};
        \node[NodeGraph, minimum size=3mm](b)at(0.5,0.86){$b$};
        \node[NodeGraph, minimum size=3mm](c)at(1,0){$d$};
        \draw[EdgeGraph](a)--(b);
        \draw[EdgeGraph](b)--(c);
        \node[RootGraph, minimum size=3mm](d)at(1,-1){$c$};
        \draw[EdgeGraph](c)--(d);
    \end{tikzpicture}
    \enspace\right)\\
    &= \psi\left(\enspace
        \begin{tikzpicture}[Centering, scale=0.6]
        \node[NodeGraph, minimum size=3mm](a)at(0,0){$a$};
        \node[NodeGraph, minimum size=3mm](b)at(0.5,0.86){$b$};
        \node[NodeGraph, minimum size=3mm](c)at(1,0){$c$};
        \draw[EdgeGraph](a)--(b);
        \draw[EdgeGraph](b)--(c);
        \node[NodeGraph, minimum size=3mm](d)at(1,-1){$d$};
        \draw[EdgeGraph](c)--(d);
    \end{tikzpicture}
    \enspace + \enspace
    \begin{tikzpicture}[Centering, scale=0.6]
        \node[NodeGraph, minimum size=3mm](a)at(0,0){$a$};
        \node[NodeGraph, minimum size=3mm](b)at(0.5,0.86){$b$};
        \node[NodeGraph, minimum size=3mm](c)at(1,0){$d$};
        \draw[EdgeGraph](a)--(b);
        \draw[EdgeGraph](b)--(c);
        \node[NodeGraph, minimum size=3mm](d)at(1,-1){$c$};
        \draw[EdgeGraph](c)--(d);
    \end{tikzpicture}
    \enspace\right) =
    \psi\left(\enspace
    \begin{tikzpicture}[Centering, scale=0.6]
        \node[NodeGraph, minimum size=3mm](a)at(0,0){$a$};
        \node[NodeGraph, minimum size=3mm](b)at(0.5,0.86){$b$};
        \node[NodeGraph, minimum size=3mm](s)at(1,0){$\ast$};
        \draw[EdgeGraph](a)--(b);
        \draw[EdgeGraph](b)--(s);
    \end{tikzpicture}
    \comp
    \begin{tikzpicture}[Centering, scale=0.6]
        \node[NodeGraph, minimum size=3mm](c)at(0,1){$c$};
        \node[NodeGraph, minimum size=3mm](d)at(0,0){$d$};
        \draw[EdgeGraph](c)--(d);
    \end{tikzpicture}
    \enspace\right).
\end{split}\end{equation}
We can note that the case when $\ast$ is the root plays a particular role.

\begin{proof}
    For $t\in \T[V]$ a tree and $r,v\in V$, we denote by $n_t(v)$ the set of neighbours of $v$ in $t$, by $c_{t,r}(v)$ the set of children of $v$ in the rooted tree 
    $(t,r)$ and if $r\not = v$, we further denote by $p_{t,r}(v)$ the parent of $v$ in~$(t,r)$.

    Let be $t_1\in \T'[V_1]$ and $t_2\in \T[V_2]$. We now make full use of the correspondence between graphs and polynomials:
    \begin{equation}\begin{split}
        \psi_{V_1}(t_1)\comp \psi_{V_2}(t_2) 
        &= \sum_{r_1\in V_1+\set{\ast}} (t_1,r_1) \comp \sum_{r_2\in V_2}(t_2,r_2) \\
        &= \sum_{r_1\in V_1+\set{\ast}}\sum_{r_2\in V_2} (t_1,r_1)\comp (t_2,r_2) \\ 
        &= \sum_{r_1\in V_1}\sum_{r_2\in V_2} \left(t_1|_{V_1}\oplus p_{t_1,r_1}(\ast)r_2
        \oplus t_2 \oplus \bigoplus_{v\in c_{t_1,r_1}(\ast)} v\left(\sum V_2\right), r_1\right) \\
        &\quad + \sum_{r_2\in V_2}\left(t_1|_{V_1}\oplus t_2 \oplus \bigoplus_{v\in c_{t_1,\ast}(\ast)} v\left(\sum V_2\right),r_2\right) \\
        &= \sum_{r_1\in V_1} \left(t_1|_{V_1}\oplus p_{t_1,r_1}(\ast)\left(\sum V_2\right) \oplus t_2 \oplus \bigoplus_{v\in c_{t_1,r_1}(\ast)}v\left(\sum V_2\right), r_1\right) \\
        &\quad+ \sum_{r_2\in V_2} \left(t_1|_{V_1}\oplus t_2 \oplus \bigoplus_{v\in c_{t_1,\ast}(\ast)} v\left(\sum V_2\right),r_2\right) \\
        &= \sum_{r\in V_1+V_2}\left(t_1|_{V_1}\oplus\bigoplus_{v\in n_{t_1}(\ast)}v\left(\sum V_2\right)\oplus t_2,r\right) \\
        &= \sum_{r\in V_1+V_2} \left(t_1|_{\ast \take \sum V_2}\oplus t_2,r\right) \\
        &= \psi_{V_1+V_2}(t_1\comp t_2).
    \end{split}\end{equation}
\end{proof}


The map $\psi$ naturally extends to a morphism of species $\MG_c\to \point{\MG}_c$. A natural question to ask is if it also possible to find an operad structure on
$\point{\MG}_c$ in order to make the following commutative diagram of species a commutative diagram of operads:
\begin{equation}\label{diagram_psi}
    \begin{tikzcd}[column sep=huge, ampersand replacement=\&]
        \T \arrow[r, hook, "\psi"] \arrow[d, hook] \& \PLie \arrow[d, hook] \\
        \MG_c \arrow[r, hook, "\psi"] \& \point{\MG}_c
    \end{tikzcd}.
\end{equation}
But as explained before, there does not seem to be a natural way to extend $\PLie$ to $\point{\MG}_c$ and we must rather consider $\point{\MG}_{orc}$, from which $\PLie$ is
indeed a sub-operad. By doing this we are now faced with the problem to find a natural way to embed $\point{\MG}_c$ in $\point{\MG}_{orc}$ which would make the species
$\psi(\MG_c)$ a sub-operad of $\point{\MG}_{orc}$ containing $\PLie$. This would then require to find a canonical orientation for each pointed multigraph compatible with
$\point{\MG}_{orc}$ operad structure, which again does not seems possible.

Fortunately, while it does not seems possible to make the diagram~\eqref{diagram_psi} a diagram of operads, we can obtain a similar result albeit with a more
involved diagram. To do this let us first introduce three new species. 
For $g\in \MG_c[V]$, $r\in V$, and $t\in \T[V]$ a spanning tree of $g$, we denote by $\ori_{t,r}$ the orientation defined as follows: the targets and sources of
the edges in $t$ are the same than in $t_r$ and the all the ends of the remaining edges are targets.

Let us define $\mathcal{I}\subset \Operad\subset\ST$ ($\ST$ standing for ``spanning tree'') three sub-species of $\point{\MG_{orc}}$ by
\begin{equation}
    \ST[V]=\K\left\{
        (g, \ori_{(t,r)},r) : g\in \MG_c[V], r\in V \text{ and $t$ a
    spanning tree of $g$}\right\},
\end{equation}
\begin{equation}
    \Operad[V] = \K\left\{\sum_{r\in V} (g, \ori_{(t(r),r)},r) : g\in
    \MG_c[V]\text{ and for each $r$, $t(r)$ a spanning tree of $g$}\right\},
\end{equation}
\begin{multline}
    \mathcal{I}[V]=
    \K\left\{(g, \ori_{(t_1,r)},r)-(g, \ori_{(t_2,r)},r) : g\in
    \MG_c[V],r\in V,
    \right. \\ \left.
    \text{ and $t_1$ and $t_2$ two spanning trees of $g$}\right\}.
\end{multline}

\begin{example}
    Let $V=\set{a,b,c,d}$. We give example of elements in $\ST[V]$, $\Operad[V]$ and $\mathcal{I}[V]$. 
    For the sake of an easier reading, instead of representing the orientations of the edges, we just colored the spanning trees in red. The blue edges should have
    both ends with an arrow shape and the red edges only the end directing to the root by only considering red dashed edges.
    \begin{equation}
        \begin{tikzpicture}[Centering,scale=1.2]
            \tikzset{every loop/.style={}}
            \node[RootGraph](a)at(0,0){$a$};
            \node[NodeGraph](b)at(0.5,0.5){$b$};
            \node[NodeGraph](c)at(1,0){$c$};
            \node[NodeGraph](d)at(0.5,-0.5){$d$};
            \draw[EdgeGraph, DarkRed, dashed](a)--(b);
            \draw[EdgeGraph, DarkRed, dashed](b)--(c);
            \draw[EdgeGraph](c)edge[bend left=20](d);
            \draw[EdgeGraph, DarkRed, dashed](c)edge[bend right=20](d);
            \draw[EdgeGraph](a)--(d);
            \draw[EdgeGraph](b)--(d);
        \end{tikzpicture}
        \enspace \in \ST[V]
    \end{equation}
    \begin{equation}
        \begin{tikzpicture}[Centering,scale=1.2]
            \tikzset{every loop/.style={}}
            \node[RootGraph](a)at(0,0){$a$};
            \node[NodeGraph](b)at(0.5,0.5){$b$};
            \node[NodeGraph](c)at(1,0){$c$};
            \node[NodeGraph](d)at(0.5,-0.5){$d$};
            \draw[EdgeGraph, DarkRed, dashed](a)--(b);
            \draw[EdgeGraph, DarkRed, dashed](b)--(c);
            \draw[EdgeGraph](c)edge[bend left=20](d);
            \draw[EdgeGraph, DarkRed, dashed](c)edge[bend right=20](d);
            \draw[EdgeGraph](a)--(d);
            \draw[EdgeGraph](b)--(d);
        \end{tikzpicture}
        \enspace + \enspace
        \begin{tikzpicture}[Centering,scale=1.2]
            \tikzset{every loop/.style={}}
            \node[NodeGraph](a)at(0,0){$a$};
            \node[RootGraph](b)at(0.5,0.5){$b$};
            \node[NodeGraph](c)at(1,0){$c$};
            \node[NodeGraph](d)at(0.5,-0.5){$d$};
            \draw[EdgeGraph, DarkRed, dashed](a)--(b);
            \draw[EdgeGraph](b)--(c);
            \draw[EdgeGraph](c)edge[bend left=20](d);
            \draw[EdgeGraph, DarkRed, dashed](c)edge[bend right=20](d);
            \draw[EdgeGraph](a)--(d);
            \draw[EdgeGraph, DarkRed, dashed](b)--(d);
        \end{tikzpicture}
        \enspace + \enspace
        \begin{tikzpicture}[Centering,scale=1.2]
            \tikzset{every loop/.style={}}
            \node[NodeGraph](a)at(0,0){$a$};
            \node[NodeGraph](b)at(0.5,0.5){$b$};
            \node[RootGraph](c)at(1,0){$c$};
            \node[NodeGraph](d)at(0.5,-0.5){$d$};
            \draw[EdgeGraph](a)--(b);
            \draw[EdgeGraph, DarkRed, dashed](b)--(c);
            \draw[EdgeGraph](c)edge[bend left=20](d);
            \draw[EdgeGraph](c)edge[bend right=20](d);
            \draw[EdgeGraph, DarkRed, dashed](a)--(d);
            \draw[EdgeGraph, DarkRed, dashed](b)--(d);
        \end{tikzpicture}
        \enspace + \enspace
        \begin{tikzpicture}[Centering,scale=1.2]
            \tikzset{every loop/.style={}}
            \node[NodeGraph](a)at(0,0){$a$};
            \node[NodeGraph](b)at(0.5,0.5){$b$};
            \node[NodeGraph](c)at(1,0){$c$};
            \node[RootGraph](d)at(0.5,-0.5){$d$};
            \draw[EdgeGraph, DarkRed, dashed](a)--(b);
            \draw[EdgeGraph, DarkRed, dashed](b)--(c);
            \draw[EdgeGraph, DarkRed, dashed](c)edge[bend left=20](d);
            \draw[EdgeGraph](c)edge[bend right=20](d);
            \draw[EdgeGraph](a)--(d);
            \draw[EdgeGraph](b)--(d);
        \end{tikzpicture}
        \enspace \in\Operad_1[V]
    \end{equation}
    \begin{equation}
        \begin{tikzpicture}[Centering,scale=1.2]
            \tikzset{every loop/.style={}}
            \node[NodeGraph](a)at(0,0){$a$};
            \node[RootGraph](b)at(0.5,0.5){$b$};
            \node[NodeGraph](c)at(1,0){$c$};
            \node[NodeGraph](d)at(0.5,-0.5){$d$};
            \draw[EdgeGraph, DarkRed, dashed](a)--(b);
            \draw[EdgeGraph, DarkRed, dashed](b)--(c);
            \draw[EdgeGraph](c)edge[bend left=20](d);
            \draw[EdgeGraph, DarkRed, dashed](c)edge[bend right=20](d);
            \draw[EdgeGraph](a)--(d);
            \draw[EdgeGraph](b)--(d);
        \end{tikzpicture}
        \enspace - \enspace
        \begin{tikzpicture}[Centering,scale=1.2]
            \tikzset{every loop/.style={}}
            \node[NodeGraph](a)at(0,0){$a$};
            \node[RootGraph](b)at(0.5,0.5){$b$};
            \node[NodeGraph](c)at(1,0){$c$};
            \node[NodeGraph](d)at(0.5,-0.5){$d$};
            \draw[EdgeGraph, DarkRed, dashed](a)--(b);
            \draw[EdgeGraph](b)--(c);
            \draw[EdgeGraph](c)edge[bend left=20](d);
            \draw[EdgeGraph, DarkRed, dashed](c)edge[bend right=20](d);
            \draw[EdgeGraph](a)--(d);
            \draw[EdgeGraph, DarkRed, dashed](b)--(d);
        \end{tikzpicture} \enspace 
        \in \Operad_2[V]
    \end{equation}

%
\end{example}

\begin{lemma} \label{lemma_op_fond}
    The following properties hold:
    \begin{enumerate}[label=(\roman*)]
        \item $\ST$ is a sub-operad of $\point{\MG}_{orc}$ isomorphic to $\MG\times\PLie$,
        \item $\Operad$ is a sub-operad of $\ST$, 
        \item $\mathcal{I}$ is an ideal of $\Operad$.
    \end{enumerate}
\end{lemma}

\begin{proof}
    Before proving these three items, we first give two equalities which will help us for the two last items. Let $U:\MG_{or} \rightarrow \MG$ 
    be the forgetful functor which sends an oriented graph on the graph obtained by forgetting the orientation. Let $g_1\in \MG_c'[V_1]$ and 
    $g_2\in \MG_c[V_2]$ be two connected multigraphs, $t$ a spanning tree of $g_1$ and for each $v\in V_2$, $t(v)$ a spanning tree 
    of $g_2$. When $\ast$ is the root of the spanning tree $t$, all the ends pointing to $\ast$ are targets. Since the target ends in an oriented multigraph 
    behave the same than the normal ends in a multigraph, the forgetful functor preserves the partial composition. For example we have: 
    \begin{equation}\begin{split}
        &U\times\id\left(\enspace
        \begin{tikzpicture}[Centering,scale=.6]
            \node[RootGraph, minimum size=3mm](s)at(0,0){$\ast$};
            \node[NodeGraph, minimum size=3mm](a)at(1,1){$a$};
            \node[NodeGraph, minimum size=3mm](b)at(1,-1){$b$};
            \draw[ArcGraph](a)--(s);
            \draw[ArcGraph](b)--(s);
        \end{tikzpicture}
        \enspace \comp \enspace
        \begin{tikzpicture}[Centering,scale=.6]
            \node[RootGraph, minimum size=3mm](c)at(0,0){$c$};
            \node[NodeGraph, minimum size=3mm](d)at(1,0){$d$};
            \draw[ArcGraph](d)--(c);
        \end{tikzpicture}
        \enspace\right)\\
        =&  U\times\id\left(\enspace
        \begin{tikzpicture}[Centering,scale=.6]
            \node[NodeGraph, minimum size=3mm](a)at(1,1){$a$};
            \node[NodeGraph, minimum size=3mm](b)at(1,-1){$b$};
            \node[RootGraph, minimum size=3mm](c)at(0,0){$c$};
            \node[NodeGraph, minimum size=3mm](d)at(2,0){$d$};
            \draw[ArcGraph](a)--(c);
            \draw[ArcGraph](b)--(c);
            \draw[ArcGraph](d)--(c);
        \end{tikzpicture}
        \enspace + \enspace
        \begin{tikzpicture}[Centering,scale=.6]
            \node[NodeGraph, minimum size=3mm](a)at(1,1){$a$};
            \node[NodeGraph, minimum size=3mm](b)at(1,-1){$b$};
            \node[RootGraph, minimum size=3mm](c)at(0,0){$c$};
            \node[NodeGraph, minimum size=3mm](d)at(2,0){$d$};
            \draw[ArcGraph](a)--(d);
            \draw[ArcGraph](d)--(c);
            \draw[ArcGraph](b)--(c);
        \end{tikzpicture}
        \enspace + \enspace
        \begin{tikzpicture}[Centering,scale=.6]
            \node[NodeGraph, minimum size=3mm](a)at(1,1){$a$};
            \node[NodeGraph, minimum size=3mm](b)at(1,-1){$b$};
            \node[RootGraph, minimum size=3mm](c)at(0,0){$c$};
            \node[NodeGraph, minimum size=3mm](d)at(2,0){$d$};
            \draw[ArcGraph](a)--(c);
            \draw[ArcGraph](d)--(c);
            \draw[ArcGraph](b)--(d);
        \end{tikzpicture}
        \enspace + \enspace
        \begin{tikzpicture}[Centering,scale=.6]
            \node[NodeGraph, minimum size=3mm](a)at(1,1){$a$};
            \node[NodeGraph, minimum size=3mm](b)at(1,-1){$b$};
            \node[RootGraph, minimum size=3mm](c)at(0,0){$c$};
            \node[NodeGraph, minimum size=3mm](d)at(2,0){$d$};
            \draw[ArcGraph](a)--(d);
            \draw[ArcGraph](d)--(c);
            \draw[ArcGraph](b)--(d);
        \end{tikzpicture}
        \enspace\right)\\
        =&\enspace
        \begin{tikzpicture}[Centering,scale=.6]
            \node[NodeGraph, minimum size=3mm](a)at(1,1){$a$};
            \node[NodeGraph, minimum size=3mm](b)at(1,-1){$b$};
            \node[RootGraph, minimum size=3mm](c)at(0,0){$c$};
            \node[NodeGraph, minimum size=3mm](d)at(2,0){$d$};
            \draw[EdgeGraph](a)--(c);
            \draw[EdgeGraph](b)--(c);
            \draw[EdgeGraph](d)--(c);
        \end{tikzpicture}
        \enspace + \enspace
        \begin{tikzpicture}[Centering,scale=.6]
            \node[NodeGraph, minimum size=3mm](a)at(1,1){$a$};
            \node[NodeGraph, minimum size=3mm](b)at(1,-1){$b$};
            \node[RootGraph, minimum size=3mm](c)at(0,0){$c$};
            \node[NodeGraph, minimum size=3mm](d)at(2,0){$d$};
            \draw[EdgeGraph](a)--(d);
            \draw[EdgeGraph](d)--(c);
            \draw[EdgeGraph](b)--(c);
        \end{tikzpicture}
        \enspace + \enspace
        \begin{tikzpicture}[Centering,scale=.6]
            \node[NodeGraph, minimum size=3mm](a)at(1,1){$a$};
            \node[NodeGraph, minimum size=3mm](b)at(1,-1){$b$};
            \node[RootGraph, minimum size=3mm](c)at(0,0){$c$};
            \node[NodeGraph, minimum size=3mm](d)at(2,0){$d$};
            \draw[EdgeGraph](a)--(c);
            \draw[EdgeGraph](d)--(c);
            \draw[EdgeGraph](b)--(d);
        \end{tikzpicture}
        \enspace + \enspace
        \begin{tikzpicture}[Centering,scale=.6]
            \node[NodeGraph, minimum size=3mm](a)at(1,1){$a$};
            \node[NodeGraph, minimum size=3mm](b)at(1,-1){$b$};
            \node[RootGraph, minimum size=3mm](c)at(0,0){$c$};
            \node[NodeGraph, minimum size=3mm](d)at(2,0){$d$};
            \draw[EdgeGraph](a)--(d);
            \draw[EdgeGraph](d)--(c);
            \draw[EdgeGraph](b)--(d);
        \end{tikzpicture}\\
        =&\enspace 
        \begin{tikzpicture}[Centering,scale=.6]
            \node[RootGraph, minimum size=3mm](s)at(0,0){$\ast$};
            \node[NodeGraph, minimum size=3mm](a)at(1,1){$a$};
            \node[NodeGraph, minimum size=3mm](b)at(1,-1){$b$};
            \draw[EdgeGraph](a)--(s);
            \draw[EdgeGraph](b)--(s);
        \end{tikzpicture}
        \enspace \comp \enspace
        \begin{tikzpicture}[Centering,scale=.6]
            \node[RootGraph, minimum size=3mm](c)at(0,0){$c$};
            \node[NodeGraph, minimum size=3mm](d)at(1,0){$d$};
            \draw[EdgeGraph](d)--(c);
        \end{tikzpicture} 
        \enspace = 
        U\times\id\left(\enspace
        \begin{tikzpicture}[Centering,scale=.6]
            \node[RootGraph, minimum size=3mm](s)at(0,0){$\ast$};
            \node[NodeGraph, minimum size=3mm](a)at(1,1){$a$};
            \node[NodeGraph, minimum size=3mm](b)at(1,-1){$b$};
            \draw[ArcGraph](a)--(s);
            \draw[ArcGraph](b)--(s);
        \end{tikzpicture}
        \enspace\right) \comp U\times\id\left(\enspace
        \begin{tikzpicture}[Centering,scale=.6]
            \node[RootGraph, minimum size=3mm](c)at(0,0){$c$};
            \node[NodeGraph, minimum size=3mm](d)at(1,0){$d$};
            \draw[ArcGraph](d)--(c);
        \end{tikzpicture}
        \enspace\right).
    \end{split}\end{equation}
    More formally, for $r\in V_2$, we have:
    \begin{equation} \begin{split}\label{lemma_op_eq1}
        U\times\id&\left((g_1, \ori_{(t,\ast)},\ast)\comp (g_2, \ori_{(t(r),r)},r)\right) \\
        &= \left(g_1|_{V_1} \oplus\bigoplus_{v\in n(\ast)}v\left(\sum V_2\right)\oplus((\sum V_2)^2)^{\oplus g_1(\ast\ast)}\oplus g_2, r\right) \\
        &= (g_1\comp g_2, r).
    \end{split} \end{equation}
    Let now $r$ be a vertex in $V_1$. Denote by $p$ the parent of $\ast$ in the rooted tree $(t,r)$, by $c(\ast)$ its children, by $n_{g_1\setminus t}(\ast)$ 
    the multiset of neighbours of $\ast$ in $g_1\setminus t$ and by $n(\ast)$ the multiset of neighbours of $\ast$ in $g_1$, so that 
    $n(\ast)=n_{g_1\setminus t}(\ast)\cup c(\ast)\cup \{p\}$. We then have
    \begin{equation} \begin{split}\label{lemma_op_eq2}
        U\times\id&\left((g_1,\ori_{(t,r)},r)\comp \sum_{v\in V_2}(g_2,\ori_{(t(v),v)},v) \right) \\
        &= \sum_{v\in V_2} U\times\id\left((g_1,\ori_{(t,r)},r)\comp (g_2,\ori_{(t(v),v)},v) \right) \\
        &= \sum_{v\in V_2}\left(g_1|_{V_1}\oplus pv \oplus \bigoplus_{v\in c(\ast)}v\left(\sum V_2\right)
            \oplus\bigoplus_{v\in n_{g_1\setminus t}(\ast)}v\left(\sum V_2\right) \oplus ((\sum V_2)^2)^{\oplus g_1(\ast\ast)}\oplus g_2, r\right)\\
        &=\left(g_1|_{V_1}\oplus p\left(\sum V_2\right) \oplus \bigoplus_{v\in c(\ast)}v \left(\sum V_2\right)
            \oplus\bigoplus_{v\in n_{g_1\setminus t}(\ast)}v\left(\sum V_2\right) \oplus ((\sum V_2)^2)^{\oplus g_1(\ast\ast)}\oplus g_2, r\right) \\
        &= \left(g_1|_{V_1}\oplus \bigoplus_{v\in n(\ast)}\left(\sum V_2\right)\oplus ((\sum V_2)^2)^{\oplus g_1(\ast\ast)}\oplus g_2, r\right) \\
        &= (g_1\comp g_2, r).
    \end{split} \end{equation}

    {\em Proof of (i)} The species morphism from $\MG\times \PLie$ to $\point{\MG_{orc}}$ given by $(g,t,r) \mapsto (g,\ori_{(t,r)}, r)$ 
    is an operad morphism and hence its image $\ST$ is a sub-operad of $ \K \point{\MG_{orc}}$.

    \begin{figure}[htbp]
        \begin{equation*}
            \begin{tikzpicture}[Centering,scale=1.3]
                \tikzset{every loop/.style={}}
                \node[NodeGraph, minimum size=3mm](a)at(0,0){$a$};
                \node[NodeGraph, minimum size=3mm](b)at(0.5,0.5){$b$};
                \node[NodeGraph, minimum size=3mm](c)at(1,0){$c$};
                \node[NodeGraph, minimum size=3mm](d)at(0.5,-0.5){$d$};
                \draw[EdgeGraph](a)--(b);
                \draw[EdgeGraph](c)edge[bend left=20](d);
                \draw[EdgeGraph](c)edge[bend right=20](d);
            \end{tikzpicture}
            \enspace \otimes \enspace
            \begin{tikzpicture}[Centering,scale=1.3]
                \tikzset{every loop/.style={}}
                \node[NodeGraph, minimum size=3mm](a)at(0,0){$a$};
                \node[NodeGraph, minimum size=3mm](b)at(0.5,0.5){$b$};
                \node[RootGraph, minimum size=3mm](c)at(1,0){$c$};
                \node[NodeGraph, minimum size=3mm](d)at(0.5,-0.5){$d$};
                \draw[EdgeGraph, DarkRed, dashed](b)--(c);
                \draw[EdgeGraph, DarkRed, dashed](a)--(d);
                \draw[EdgeGraph, DarkRed, dashed](b)--(d);
            \end{tikzpicture}
            \enspace \mapsto \enspace
            \begin{tikzpicture}[Centering,scale=1.3]
                \tikzset{every loop/.style={}}
                \node[NodeGraph, minimum size=3mm](a)at(0,0){$a$};
                \node[NodeGraph, minimum size=3mm](b)at(0.5,0.5){$b$};
                \node[RootGraph, minimum size=3mm](c)at(1,0){$c$};
                \node[NodeGraph, minimum size=3mm](d)at(0.5,-0.5){$d$};
                \draw[ArcGraph](a)--(b);
                \draw[ArcGraph](b)--(a);
                \draw[ArcGraph, DarkRed](b)--(c);
                \draw[ArcGraph](c)edge[bend left=30](d);
                \draw[ArcGraph](c)edge[bend right=30](d);
                \draw[ArcGraph](d)edge[bend left=30](c);
                \draw[ArcGraph](d)edge[bend right=30](c);
                \draw[ArcGraph, DarkRed](a)--(d);
                \draw[ArcGraph, DarkRed](d)--(b);
            \end{tikzpicture}
        \end{equation*}
        \caption{An example of the isomorphism of item \textit{i}.}
    \end{figure}

    {\em Proof of (ii)} Let $V_1$ and $V_2$ be two disjoint sets, $g_1\in \MG'_c[V_1]$ and $g_2\in \MG_c[V_2]$ be two connected multigraphs and for each 
    $v\in V_1+\set{\ast}$, $t(v)$ a spanning tree of $g_1$ and for each $v\in V_2$, $t(v)$ a spanning tree of $g_2$. We have
    \begin{equation}\begin{split}
        \sum_{r_1\in V_1+\set{\ast}} (g_1,\ori_{(t(r_1),r_1)}, r_1) &\comp \sum_{r_2\in V_2} (g_2,\ori_{(t(r_2),r_2)}, r_2)
        = \sum_{r_1\in V_1+\set{\ast}}\sum_{r_2\in V_2} (g_1,\ori_{(t(r_1),r_1)}, r_1)\comp (g_2,\ori_{(t(r_2),r_2)}, r_2) 
    \end{split}\end{equation}
    Then from \eqref{lemma_op_eq1} and \eqref{lemma_op_eq2} we know that applying $U\times\id$ to the preceding sum gives us:
    \begin{equation}
        \sum_{r\in V_1+V_2} (g_1\comp g_2,r).
    \end{equation}
    To conclude note that $\Operad[V]$ is the reciprocal image of $\K\{\sum_{v\in V}(g,v)\,|\,g\in \MG_c[V]\}$ by 
    $U\times\id: \ST\rightarrow \point{\MG}$.

    {\em Proof of (iii)} It is easy to see that $\mathcal{I}$ is a left ideal of $\ST$ and hence of $\Operad$. Let be $g_1\in \MG'_c[V_1]$, $g_2\in \MG_c[V_2]$, 
    $r\in V_1$, $t$ a spanning tree of $g_1$ and for every $v\in V_2$, $t(v)$ a spanning tree of $g_2$. Then from 
    \eqref{lemma_op_eq1} and \eqref{lemma_op_eq2} we know that $U\times\id((g_1,\ori_{(t,r)}, r)\comp \sum_{v\in V_2} (g_2,\ori_{(t(v),v))}, v)$ 
    is of the form $(g_1\comp g_2, r)$ if $r\not = \ast$, and of the form $\sum_{v\in V_2} (g_1\comp g_2, v)$ otherwise. In both cases it does not depend on $t$.
    This concludes this proof since $\mathcal{I}[V]$ is the kernel of $(U\times\id)_V: \ST[V]\rightarrow \MG_c^{\bullet}[V]$.
\end{proof}

We can see $\PLie$ as a sub-operad of $\ST$ by the monomorphism $(t,r)\mapsto (t, \ori_r,r)$. The image of the operad morphism $\psi$ of Proposition~\ref{prop_prelie} is
then $\Operad\cap \PLie$ and we have that $\mathcal{I}\cap \PLie = \set{0}$ and hence $\Operad\cap \PLie/\mathcal{I}\cap \PLie = \Operad\cap \PLie$.

\begin{proposition}
    The operad isomorphism $\psi: \T \to \PLie\cap\Operad$ extends into an operad isomorphism $\psi: \MG_c \to \Operad/\mathcal{I}$ satisfying,
    for any $g \in \MG_c[V]$,
    \begin{equation}
        \psi(g) = \sum_{r\in V} (g,\ori_{(t(r),r)}, r),
    \end{equation}
    where for each $r\in V$, $t(r)$ is a spanning tree of $g$. Furthermore, this isomorphism restricts itself to an isomorphism 
    $\G_c \to \Operad\cap\point{\G_{orc}}/\mathcal{I}\cap\point{\G_{orc}}$.
\end{proposition}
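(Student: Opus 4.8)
The plan is to obtain $\psi$ on all of $\K\MG_c$ by transporting, through the forgetful map $U\times\id$ appearing in the proof of Lemma~\ref{lemmfond}, the assignment $\Phi\colon g\mapsto\sum_{v\in V}(g,v)$, and to read off every required property from Lemma~\ref{lemmfond} and its proof. First I would check that $\psi$ is a well-defined morphism of species. For $g\in\MG_c[V]$ and any family $\{t(r)\}_{r\in V}$ of spanning trees of $g$, the element $\sum_{r\in V}(\overrightarrow{g}^{(t(r),r)},r)$ belongs to $\Operad_1[V]$ by definition, and replacing $\{t(r)\}$ by another family $\{t'(r)\}$ alters it by $\sum_{r}\bigl((\overrightarrow{g}^{(t(r),r)},r)-(\overrightarrow{g}^{(t'(r),r)},r)\bigr)$, a sum of generators of $\Operad_2[V]$, hence an element of $\K\Operad_2[V]$. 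So the class $\psi(g)\in(\K\Operad_1/\K\Operad_2)[V]$ does not depend on the choices; extending linearly, and noting that spanning trees transport along relabelling bijections, $\psi$ is an equivariant map $\K\MG_c\to\K\Operad_1/\K\Operad_2$. It sends the one-vertex graph to itself, so it preserves units.

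Next I would use $U\times\id$ as a bridge. By the proof of Lemma~\ref{lemmfond}(iii), $\K\Operad_2$ is the kernel of $U\times\id$ on $\K\ST$, so $U\times\id$ descends to an \emph{injective} linear map on $\K\ST/\K\Operad_2$; moreover $\Operad_1[V]$ was defined as the $(U\times\id)$-preimage of $\K\{\sum_{v\in V}(g,v) : g\in\MG_c[V]\}$. Since $(U\times\id)(\overrightarrow{g}^{(t,r)},r)=(g,r)$, we get $(U\times\id)\circ\psi=\Phi$, and $\Phi$ is injective because the elements $(g,v)$, for $g\in\MG_c[V]$ and $v\in V$, form a basis of $\K\MG_c^\bullet[V]$. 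Hence $\psi$ is injective. For surjectivity, any $x\in\K\Operad_1[V]$ satisfies $(U\times\id)(x)\in\K\{\sum_v(g,v)\}=\Phi(\K\MG_c[V])$, say $(U\times\id)(x)=\Phi(h)$; then $[x]$ and $\psi(h)$ have the same image under the injective map induced by $U\times\id$, so $[x]=\psi(h)$. Thus $\psi$ is a bijection $\K\MG_c\to\K\Operad_1/\K\Operad_2$.

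It remains to see that $\psi$ respects partial composition. Fix $g_1\in\MG_c[V_1+\{\ast\}]$ and $g_2\in\MG_c[V_2]$, choose spanning trees, and consider the representative $\sum_{r_1}(\overrightarrow{g_1}^{(t(r_1),r_1)},r_1)\circ_\ast\sum_{r_2}(\overrightarrow{g_2}^{(t(r_2),r_2)},r_2)$ of $\psi(g_1)\circ_\ast\psi(g_2)$, the composition taking place in the operad $\K\ST$ of Lemma~\ref{lemmfond}(i)--(ii). The computation carried out in the proof of Lemma~\ref{lemmfond}(ii) shows that $U\times\id$ applied to this representative equals $\sum_{r\in V_1+V_2}(g_1\circ_\ast g_2,r)$, which by linearity is also $U\times\id$ applied to a representative of $\psi(g_1\circ_\ast g_2)$. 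As $U\times\id$ is injective modulo $\K\Operad_2$, this forces $\psi(g_1)\circ_\ast\psi(g_2)=\psi(g_1\circ_\ast g_2)$. Thus $\psi\colon\K\MG_c\to\K\Operad_1/\K\Operad_2$ is an operad isomorphism, and it extends the map of Proposition~\ref{prelie}: for a tree $t\in\T[V]$ the only spanning tree is $t$, so $\overrightarrow{t}^{(t,r)}=t_r$ and $\psi(t)=\bigl[\sum_r(t_r,r)\bigr]$, which is the image of the old $\psi(t)=\sum_r(t,r)$ under the embedding $\PLie\cap\K\Operad_1\hookrightarrow\K\Operad_1/\K\Operad_2$, injective because $\K\Operad_2\cap\PLie=\{0\}$.

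For the last assertion, if $g\in\G_c[V]$ then every $\overrightarrow{g}^{(t,r)}$ is a simple oriented graph, so $\psi(g)$ has a representative in $\K\Operad_1[V]\cap\K\G_{orc}^\bullet[V]$ and $\psi$ maps $\K\G_c$ into the suboperad $(\K\Operad_1\cap\K\G_{orc}^\bullet)/(\K\Operad_2\cap\K\G_{orc}^\bullet)$, where it stays injective. Conversely, for $x\in\K\Operad_1[V]\cap\K\G_{orc}^\bullet[V]$ one has $(U\times\id)(x)\in\K\G_c^\bullet[V]\cap\Phi(\K\MG_c[V])$, and since $\Phi$ takes any component supported on a non-simple multigraph to a nonzero combination of basis elements $(g,v)$ with $g$ non-simple, $(U\times\id)(x)=\Phi(h)$ forces $h\in\K\G_c[V]$; then $[x]=\psi(h)$ lies in the image of $\psi|_{\K\G_c}$. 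Hence $\psi$ restricts to an operad isomorphism $\K\G_c\to(\K\Operad_1\cap\K\G_{orc}^\bullet)/(\K\Operad_2\cap\K\G_{orc}^\bullet)$. The substance is entirely in Lemma~\ref{lemmfond}; the step most likely to hide a slip is verifying that its computation of $U\times\id$ on a composition, combined with $\K\Operad_2=\ker(U\times\id|_{\K\ST})$, indeed pins down $\psi$ as a morphism of operads.
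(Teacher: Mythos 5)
Your proof is correct and follows exactly the route the paper intends: its own proof is the single sentence that the statement is ``a direct consequence of Lemma~\ref{lemmfond} and its proof'', and what you wrote is precisely the expansion of that, transporting $g\mapsto\sum_{v}(g,v)$ through $U\times\id$ and using the kernel/preimage characterisations of $\K\Operad_2$ and $\K\Operad_1$ together with the composition computations \eqref{eq1}--\eqref{eq2} from the lemma's proof. No genuinely different ideas are introduced, and no gaps of substance remain (only routine details such as noting that the difference of representatives for the restricted isomorphism already lies in $\K\Operad_2\cap\K\G_{orc}^{\bullet}$).
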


\begin{proof}
    This statement is a direct consequence of Lemma~\ref{lemma_op_fond} and its proof.
\end{proof}

The last results are summarized in the following commutative diagram of operad morphisms.
\begin{equation}
    \begin{tikzcd}
        \T \arrow[r, "\sim"] \arrow[d, hook]
        & \PLie\cap\K\Operad/\mathcal{I} \arrow[r, equal] \arrow[d, hook]
        & \PLie\cap\Operad \arrow[d,hook] \arrow[r, hook]
        & \PLie \arrow[d,hook]\\
        \G_c \arrow[r, "\sim"] \arrow[d, hook]
        & \Operad\cap\point{\G}_{orc}/\mathcal{I}\cap\point{\G_{orc}} \arrow[d, hook]
        & \point{\G}_{orc}\cap\Operad \arrow[l, two heads] \arrow[r, hook] \arrow[d, hook]
        & \point{\G}_{orc}\cap\ST \arrow[d, hook] \\
        \MG_c \arrow[r, "\sim"]
        & \Operad/\mathcal{I}
        & \Operad \arrow[l, two heads] \arrow[r, hook]
        & \MG\times\PLie
    \end{tikzcd}
\end{equation}


\subsection{The $\MG$ operad}
As shown in the previous subsection, $\MG$ and its sub-operads have some interesting properties and links with $\PLie$. While $\MG$ was defined by explicitly 
describing $\MG[V]$ for each $V$, the operad $\PLie$ was first defined by its generator and the pre-Lie relation and later proved to be an operad on rooted
tree in \cite{CL01}. It is then natural to search for generators and relations of $\MG$ and its sub-operads. 

We first search for a smallest family of generators of $\G$. The search for such a family is computationally hard. Using computer algebra, we obtain a family 
which generates simple graphs of arity less than $5$:
\begin{equation}\begin{split} \label{equ:generators_G}
    &\begin{tikzpicture}[Centering,scale=.6]
        \node[UnlabeledNodeGraph](a)at(0,0){};
        \node[UnlabeledNodeGraph](b)at(1,0){};
    \end{tikzpicture},
    \enspace
    \begin{tikzpicture}[Centering,scale=.6]
        \node[UnlabeledNodeGraph](a)at(0,0){};
        \node[UnlabeledNodeGraph](b)at(1,0){};
        \draw[EdgeGraph](a)--(b);
    \end{tikzpicture},
    \qquad \quad
    \begin{tikzpicture}[Centering,scale=.3]
        \node[UnlabeledNodeGraph](a)at(-1,-1){};
        \node[UnlabeledNodeGraph](b)at(1,-1){};
        \node[UnlabeledNodeGraph](c)at(0,.707){};
        \draw[EdgeGraph](a)--(b);
        \draw[EdgeGraph](a)--(c);
        \draw[EdgeGraph](b)--(c);
    \end{tikzpicture},
    \qquad \quad
    \begin{tikzpicture}[Centering,scale=.4]
        \node[UnlabeledNodeGraph](a)at(0,0){};
        \node[UnlabeledNodeGraph](b)at(1,0){};
        \node[UnlabeledNodeGraph](c)at(2,0){};
        \node[UnlabeledNodeGraph](d)at(3,0){};
        \draw[EdgeGraph](a)--(b);
        \draw[EdgeGraph](b)--(c);
        \draw[EdgeGraph](c)--(d);
    \end{tikzpicture},
    \enspace
    \begin{tikzpicture}[Centering,scale=.4]
        \node[UnlabeledNodeGraph](a)at(-1,-1){};
        \node[UnlabeledNodeGraph](b)at(-1,1){};
        \node[UnlabeledNodeGraph](c)at(0,0){};
        \node[UnlabeledNodeGraph](d)at(-2,0){};
        \draw[EdgeGraph](a)--(b);
        \draw[EdgeGraph](a)--(c);
        \draw[EdgeGraph](a)--(d);
        \draw[EdgeGraph](b)--(d);
    \end{tikzpicture},
    \enspace
    \begin{tikzpicture}[Centering,scale=.4]
        \node[UnlabeledNodeGraph](a)at(-1,-1){};
        \node[UnlabeledNodeGraph](b)at(-1,1){};
        \node[UnlabeledNodeGraph](c)at(0,0){};
        \node[UnlabeledNodeGraph](d)at(-2,0){};
        \draw[EdgeGraph](a)--(b);
        \draw[EdgeGraph](a)--(c);
        \draw[EdgeGraph](a)--(d);
        \draw[EdgeGraph](b)--(c);
        \draw[EdgeGraph](b)--(d);
    \end{tikzpicture},
    \enspace
    \begin{tikzpicture}[Centering,scale=.4]
        \node[UnlabeledNodeGraph](a)at(-1,-1){};
        \node[UnlabeledNodeGraph](b)at(-1,1){};
        \node[UnlabeledNodeGraph](c)at(0,0){};
        \node[UnlabeledNodeGraph](d)at(-2,0){};
        \draw[EdgeGraph](a)--(b);
        \draw[EdgeGraph](a)--(c);
        \draw[EdgeGraph](a)--(d);
        \draw[EdgeGraph](b)--(c);
        \draw[EdgeGraph](b)--(d);
        \draw[EdgeGraph](c)--(d);
    \end{tikzpicture},
    \\
    &
    \begin{tikzpicture}[Centering,scale=.3]
        \node[UnlabeledNodeGraph](a)at(-2,0){};
        \node[UnlabeledNodeGraph](b)at(0,0){};
        \node[UnlabeledNodeGraph](c)at(-2,2){};
        \node[UnlabeledNodeGraph](d)at(0,2){};
        \node[UnlabeledNodeGraph](e)at(-1,-2){};
        \draw[EdgeGraph](a)--(b);
        \draw[EdgeGraph](a)--(c);
        \draw[EdgeGraph](a)--(e);
        \draw[EdgeGraph](b)--(d);
        \draw[EdgeGraph](b)--(e);
    \end{tikzpicture},
    \enspace
    \begin{tikzpicture}[Centering,scale=.3]
        \node[UnlabeledNodeGraph](a)at(-2,0){};
        \node[UnlabeledNodeGraph](b)at(0,-2){};
        \node[UnlabeledNodeGraph](c)at(-2,2){};
        \node[UnlabeledNodeGraph](d)at(0,0){};
        \node[UnlabeledNodeGraph](e)at(-2,-2){};
        \draw[EdgeGraph](a)--(c);
        \draw[EdgeGraph](a)--(d);
        \draw[EdgeGraph](a)--(e);
        \draw[EdgeGraph](b)--(d);
        \draw[EdgeGraph](b)--(e);
    \end{tikzpicture},
    \enspace
    \begin{tikzpicture}[Centering,scale=.3]
        \node[UnlabeledNodeGraph](a)at(-1,-1){};
        \node[UnlabeledNodeGraph](b)at(-1.25,.75){};
        \node[UnlabeledNodeGraph](c)at(0,2){};
        \node[UnlabeledNodeGraph](d)at(1.25,.75){};
        \node[UnlabeledNodeGraph](e)at(1,-1){};
        \draw[EdgeGraph](a)--(b);
        \draw[EdgeGraph](b)--(c);
        \draw[EdgeGraph](c)--(d);
        \draw[EdgeGraph](d)--(e);
        \draw[EdgeGraph](e)--(a);
    \end{tikzpicture},
    \enspace
    \begin{tikzpicture}[Centering,scale=.3]
        \node[UnlabeledNodeGraph](a)at(-2,0){};
        \node[UnlabeledNodeGraph](b)at(0,0){};
        \node[UnlabeledNodeGraph](c)at(-2,2){};
        \node[UnlabeledNodeGraph](d)at(0,2){};
        \node[UnlabeledNodeGraph](e)at(-1,-2){};
        \draw[EdgeGraph](a)--(b);
        \draw[EdgeGraph](a)--(c);
        \draw[EdgeGraph](a)--(d);
        \draw[EdgeGraph](a)--(e);
        \draw[EdgeGraph](b)--(d);
        \draw[EdgeGraph](b)--(e);
    \end{tikzpicture},
    \enspace
    \begin{tikzpicture}[Centering,scale=.4]
        \node[UnlabeledNodeGraph](a)at(0,0){};
        \node[UnlabeledNodeGraph](b)at(1,-1){};
        \node[UnlabeledNodeGraph](c)at(-1,1){};
        \node[UnlabeledNodeGraph](d)at(1,1){};
        \node[UnlabeledNodeGraph](e)at(-1,-1){};
        \draw[EdgeGraph](a)--(b);
        \draw[EdgeGraph](a)--(c);
        \draw[EdgeGraph](a)--(d);
        \draw[EdgeGraph](a)--(e);
        \draw[EdgeGraph](b)--(d);
        \draw[EdgeGraph](c)--(e);
    \end{tikzpicture},
    \enspace
    \begin{tikzpicture}[Centering,scale=.4]
        \node[UnlabeledNodeGraph](a)at(0,0){};
        \node[UnlabeledNodeGraph](b)at(1,-1){};
        \node[UnlabeledNodeGraph](c)at(-1,1){};
        \node[UnlabeledNodeGraph](d)at(1,1){};
        \node[UnlabeledNodeGraph](e)at(-1,-1){};
        \draw[EdgeGraph](a)--(b);
        \draw[EdgeGraph](a)--(d);
        \draw[EdgeGraph](a)--(e);
        \draw[EdgeGraph](b)--(d);
        \draw[EdgeGraph](b)--(e);
        \draw[EdgeGraph](c)--(e);
    \end{tikzpicture},
    \begin{tikzpicture}[Centering,scale=.4]
        \node[UnlabeledNodeGraph](a)at(0,0){};
        \node[UnlabeledNodeGraph](b)at(1,-1){};
        \node[UnlabeledNodeGraph](c)at(-1,1){};
        \node[UnlabeledNodeGraph](d)at(1,1){};
        \node[UnlabeledNodeGraph](e)at(-1,-1){};
        \draw[EdgeGraph](a)--(c);
        \draw[EdgeGraph](a)--(d);
        \draw[EdgeGraph](a)--(e);
        \draw[EdgeGraph](b)--(d);
        \draw[EdgeGraph](b)--(e);
        \draw[EdgeGraph](c)--(e);
    \end{tikzpicture},
    \enspace
    \begin{tikzpicture}[Centering,scale=.3]
        \node[UnlabeledNodeGraph](a)at(0,-1){};
        \node[UnlabeledNodeGraph](b)at(3,-1){};
        \node[UnlabeledNodeGraph](c)at(1.5,2){};
        \node[UnlabeledNodeGraph](d)at(1.5,.5){};
        \node[UnlabeledNodeGraph](e)at(1.5,-2.5){};
        \draw[EdgeGraph](a)--(b);
        \draw[EdgeGraph](a)--(c);
        \draw[EdgeGraph](a)--(d);
        \draw[EdgeGraph](a)--(e);
        \draw[EdgeGraph](b)--(c);
        \draw[EdgeGraph](b)--(d);
        \draw[EdgeGraph](b)--(e);
    \end{tikzpicture},
    \enspace
    \begin{tikzpicture}[Centering,scale=.3]
        \node[UnlabeledNodeGraph](a)at(0,-1.5){};
        \node[UnlabeledNodeGraph](b)at(3,-1){};
        \node[UnlabeledNodeGraph](c)at(3,-2){};
        \node[UnlabeledNodeGraph](d)at(1.5,0){};
        \node[UnlabeledNodeGraph](e)at(1.5,-3){};
        \draw[EdgeGraph](a)--(b);
        \draw[EdgeGraph](a)--(c);
        \draw[EdgeGraph](a)--(d);
        \draw[EdgeGraph](a)--(e);
        \draw[EdgeGraph](b)--(c);
        \draw[EdgeGraph](b)--(d);
        \draw[EdgeGraph](c)--(e);
    \end{tikzpicture},
    \enspace
    \begin{tikzpicture}[Centering,scale=.6]
        \node[UnlabeledNodeGraph](a)at(0,0){};
        \node[UnlabeledNodeGraph](b)at(1,1){};
        \node[UnlabeledNodeGraph](c)at(.5,-1){};
        \node[UnlabeledNodeGraph](d)at(0,1){};
        \node[UnlabeledNodeGraph](e)at(1,0){};
        \draw[EdgeGraph](a)--(b);
        \draw[EdgeGraph](a)--(c);
        \draw[EdgeGraph](a)--(d);
        \draw[EdgeGraph](a)--(e);
        \draw[EdgeGraph](b)--(d);
        \draw[EdgeGraph](b)--(e);
        \draw[EdgeGraph](c)--(e);
    \end{tikzpicture},
    \\
    &
    \quad
    \begin{tikzpicture}[Centering,scale=.6]
        \node[UnlabeledNodeGraph](a)at(0,0){};
        \node[UnlabeledNodeGraph](b)at(1,1){};
        \node[UnlabeledNodeGraph](c)at(2,-1){};
        \node[UnlabeledNodeGraph](d)at(0,1){};
        \node[UnlabeledNodeGraph](e)at(1,0){};
        \draw[EdgeGraph](a)--(c);
        \draw[EdgeGraph](a)--(d);
        \draw[EdgeGraph](a)--(e);
        \draw[EdgeGraph](b)--(c);
        \draw[EdgeGraph](b)--(d);
        \draw[EdgeGraph](b)--(e);
        \draw[EdgeGraph](c)--(e);
    \end{tikzpicture},
    \enspace
    \begin{tikzpicture}[Centering,scale=.6]
        \node[UnlabeledNodeGraph](a)at(0,0){};
        \node[UnlabeledNodeGraph](b)at(1,1){};
        \node[UnlabeledNodeGraph](c)at(2,-1){};
        \node[UnlabeledNodeGraph](d)at(0,1){};
        \node[UnlabeledNodeGraph](e)at(1,0){};
        \draw[EdgeGraph](a)--(b);
        \draw[EdgeGraph](a)--(c);
        \draw[EdgeGraph](a)--(d);
        \draw[EdgeGraph](a)--(e);
        \draw[EdgeGraph](b)--(c);
        \draw[EdgeGraph](b)--(d);
        \draw[EdgeGraph](b)--(e);
        \draw[EdgeGraph](c)--(e);
    \end{tikzpicture},
    \enspace
    \begin{tikzpicture}[Centering,scale=.6,rotate=90]
        \node[UnlabeledNodeGraph](a)at(0,0){};
        \node[UnlabeledNodeGraph](b)at(1,1){};
        \node[UnlabeledNodeGraph](c)at(-1,-1){};
        \node[UnlabeledNodeGraph](d)at(0,1){};
        \node[UnlabeledNodeGraph](e)at(1,0){};
        \draw[EdgeGraph](a)--(b);
        \draw[EdgeGraph](a)--(c);
        \draw[EdgeGraph](a)--(d);
        \draw[EdgeGraph](a)--(e);
        \draw[EdgeGraph](b)--(d);
        \draw[EdgeGraph](b)--(e);
        \draw[EdgeGraph](c)--(d);
        \draw[EdgeGraph](c)--(e);
    \end{tikzpicture},
    \enspace
    \begin{tikzpicture}[Centering,scale=.6,rotate=90]
        \node[UnlabeledNodeGraph](a)at(0,0){};
        \node[UnlabeledNodeGraph](b)at(1,1){};
        \node[UnlabeledNodeGraph](c)at(-1,-1){};
        \node[UnlabeledNodeGraph](d)at(0,1){};
        \node[UnlabeledNodeGraph](e)at(1,0){};
        \draw[EdgeGraph](a)--(b);
        \draw[EdgeGraph](a)--(c);
        \draw[EdgeGraph](a)--(d);
        \draw[EdgeGraph](a)--(e);
        \draw[EdgeGraph](b)--(d);
        \draw[EdgeGraph](b)--(e);
        \draw[EdgeGraph](c)--(d);
        \draw[EdgeGraph](c)--(e);
        \draw[EdgeGraph](d)--(e);
    \end{tikzpicture},
    \enspace
    \begin{tikzpicture}[Centering,scale=.35]
        \node[UnlabeledNodeGraph](a)at(-1,-1){};
        \node[UnlabeledNodeGraph](b)at(-1.25,.75){};
        \node[UnlabeledNodeGraph](c)at(0,2){};
        \node[UnlabeledNodeGraph](d)at(1.25,.75){};
        \node[UnlabeledNodeGraph](e)at(1,-1){};
        \draw[EdgeGraph](a)--(b);
        \draw[EdgeGraph](a)--(c);
        \draw[EdgeGraph](a)--(d);
        \draw[EdgeGraph](a)--(e);
        \draw[EdgeGraph](b)--(c);
        \draw[EdgeGraph](b)--(d);
        \draw[EdgeGraph](b)--(e);
        \draw[EdgeGraph](c)--(d);
        \draw[EdgeGraph](c)--(e);
        \draw[EdgeGraph](d)--(e);
    \end{tikzpicture}.
\end{split}\end{equation}
Due to the symmetric group action on $\G$, only the knowledge of the shapes of the graphs is significant. While~\eqref{equ:generators_G} does not provide to us 
any particular insight on a possible characterisation of the generators, it does suggest that any graph with ``enough'' edges must be a generator. 
This is confirmed by the following lemma. We say that a graph $g\in \G$ is generated by a set $E$ of graphs if $g$ is in the sub-operad generated by $E$.

\begin{lemma} \label{lem:infinite_number_generators_G}
    Let $\spe{S}$ be a sub-species of $\G$ and let $g$ be a graph in $\G[V]$ with at least $\binom{n-1}{2} +1$ edges, where $n = \card{V}$. 
    Then $g$ is generated by $\spe{S}$ if and only if $g\in\spe{S}[V]$.
\end{lemma}

\begin{proof}
    Suppose that $g\not\in\spe{S}[V]$. It is sufficient to show that $g$ cannot appear in the support of any vector of the form $g_1\comp g_2$ for any 
    $g_1$ and $g_2$ different of $g$. Hence let $g_1\in \G'[V_1]$ and $g_2\in \G[V_2]$ be two graphs, and 
    denote by $e_1$ the number of edges of $g_1$ and by $e_2$ the number of edges of $g_2$. Then the graphs in the sum $g_1\comp g_2$ have $e_1+e_2$ edges. 
    This is maximal when $g_1$ and $g_2$ are both complete graphs and is then equal to $\binom{x}{2}+\binom{n+1-x}{2} = x^2-(n+1)x+\binom{n+1}{2}$ where 
    $1\leq x=\card{V_1}\leq n$.

    If $x = 1$ then necessarily $g_1 =\emptyset_{\ast}$ and $g_1\comp g_2 = g_2$ and $g$ appears in the sum $g_1\comp g_2$ if and only if $g=g_2$. This is impossible, 
    hence $x\not = 1$. Similarly we have $x\not = n$. The expression $x^2-(n+1)x+\binom{n+1}{2}$ is then maximal for $x=2$ or $x=n-1$ and is equal in both cases 
    to $\binom{n-1}{2}<\binom{n-1}{2} +1$. This implies that $g$ can not be part of the sum of graphs $g_1\comp g_2$ and concludes the proof.
\end{proof}

\begin{proposition} \label{gc} 
    The operad $\G$ is not free and has an infinite number of generators.
\end{proposition}

\begin{proof}
    The fact that $\G$ has an infinite number of generators is a direct consequence of Lemma~\ref{lem:infinite_number_generators_G}. Moreover, the relation
    \begin{equation}\begin{split} \label{nf}
        \begin{tikzpicture}[Centering,scale=.7]
            \node[NodeGraph](a)at(0,0){$a$};
            \node[NodeGraph](s)at(1,0){$\ast$};
            \draw[EdgeGraph](a)--(s);
        \end{tikzpicture}
        &
        \comp
        \begin{tikzpicture}[Centering,scale=.7]
            \node[NodeGraph](b)at(0,0){$b$};
            \node[NodeGraph](c)at(1,0){$c$};
            \draw[EdgeGraph](b)--(c);
        \end{tikzpicture}
        \enspace + \enspace
        \begin{tikzpicture}[Centering,scale=.7]
            \node[NodeGraph](c)at(0,0){$c$};
            \node[NodeGraph](s)at(1,0){$\ast$};
            \draw[EdgeGraph](c)--(s);
        \end{tikzpicture}
        \comp
        \begin{tikzpicture}[Centering,scale=.7]
            \node[NodeGraph](b)at(0,0){$b$};
            \node[NodeGraph](a)at(1,0){$a$};
            \draw[EdgeGraph](b)--(a);
        \end{tikzpicture}
        \enspace - \enspace
        \begin{tikzpicture}[Centering,scale=.7]
            \node[NodeGraph](b)at(0,0){$b$};
            \node[NodeGraph](s)at(1,0){$\ast$};
            \draw[EdgeGraph](b)--(s);
        \end{tikzpicture}
        \comp
        \begin{tikzpicture}[Centering,scale=.7]
            \node[NodeGraph](a)at(0,0){$a$};
            \node[NodeGraph](c)at(1,0){$c$};
            \draw[EdgeGraph](a)--(c);
        \end{tikzpicture}
        \enspace
        - 2 \,
        \begin{tikzpicture}[Centering,scale=.7]
            \node[NodeGraph](a)at(0,0){$a$};
            \node[NodeGraph](b)at(1,0){$b$};
            \node[NodeGraph](c)at(2,0){$c$};
            \draw[EdgeGraph](a)--(b);
            \draw[EdgeGraph](b)--(c);
        \end{tikzpicture}
        \\[.5em]
        & =
        \begin{tikzpicture}[Centering,scale=.7]
            \node[NodeGraph](a)at(0,0){$a$};
            \node[NodeGraph](b)at(1,0){$b$};
            \node[NodeGraph](c)at(2,0){$c$};
            \draw[EdgeGraph](a)--(b);
            \draw[EdgeGraph](b)--(c);
        \end{tikzpicture}
        \enspace + \enspace
        \begin{tikzpicture}[Centering,scale=.7]
            \node[NodeGraph](b)at(0,0){$b$};
            \node[NodeGraph](c)at(1,0){$c$};
            \node[NodeGraph](a)at(2,0){$a$};
            \draw[EdgeGraph](b)--(c);
            \draw[EdgeGraph](c)--(a);
        \end{tikzpicture}
        \enspace + \enspace
        \begin{tikzpicture}[Centering,scale=.7]
            \node[NodeGraph](c)at(0,0){$c$};
            \node[NodeGraph](b)at(1,0){$b$};
            \node[NodeGraph](a)at(2,0){$a$};
            \draw[EdgeGraph](c)--(b);
            \draw[EdgeGraph](b)--(a);
        \end{tikzpicture}
        \enspace + \enspace
        \begin{tikzpicture}[Centering,scale=.7]
            \node[NodeGraph](b)at(0,0){$b$};
            \node[NodeGraph](a)at(1,0){$a$};
            \node[NodeGraph](c)at(2,0){$c$};
            \draw[EdgeGraph](b)--(a);
            \draw[EdgeGraph](a)--(c);
        \end{tikzpicture}
        \\[.5em]
        & \qquad - \enspace
        \begin{tikzpicture}[Centering,scale=.7]
            \node[NodeGraph](b)at(0,0){$b$};
            \node[NodeGraph](a)at(1,0){$a$};
            \node[NodeGraph](c)at(2,0){$c$};
            \draw[EdgeGraph](b)--(a);
            \draw[EdgeGraph](a)--(c);
        \end{tikzpicture}
        \enspace - \enspace
        \begin{tikzpicture}[Centering,scale=.7]
            \node[NodeGraph](a)at(0,0){$a$};
            \node[NodeGraph](c)at(1,0){$c$};
            \node[NodeGraph](b)at(2,0){$b$};
            \draw[EdgeGraph](a)--(c);
            \draw[EdgeGraph](c)--(b);
        \end{tikzpicture}
        - 2 \,
        \begin{tikzpicture}[Centering,scale=.7]
            \node[NodeGraph](a)at(0,0){$a$};
            \node[NodeGraph](b)at(1,0){$b$};
            \node[NodeGraph](c)at(2,0){$c$};
            \draw[EdgeGraph](a)--(b);
            \draw[EdgeGraph](b)--(c);
        \end{tikzpicture}
        \\[.5em]
        & = 0
    \end{split}\end{equation}
    shows that $\G$ is not free.
\end{proof}

As a consequence of Proposition~\ref{gc}, it seems particularly involved to find a definition of $\G$ by generator and relation. We did not have any more success in describing
$\T$ in such a way even exploiting the monomorphism $\psi:\T\to\PLie$. We at least managed to compute a family of generators of $\T$ with arity less than $6$:
\begin{equation}\begin{split}
    \begin{tikzpicture}[Centering,scale=.6]
        \node[UnlabeledNodeGraph](a)at(0,0){};
        \node[UnlabeledNodeGraph](b)at(1,0){};
        \draw[EdgeGraph](a)--(b);
    \end{tikzpicture},
    \qquad \quad
    \begin{tikzpicture}[Centering,scale=.35]
        \node[UnlabeledNodeGraph](a)at(0,0){};
        \node[UnlabeledNodeGraph](b)at(1,0){};
        \node[UnlabeledNodeGraph](c)at(2,0){};
        \node[UnlabeledNodeGraph](d)at(3,0){};
        \draw[EdgeGraph](a)--(b);
        \draw[EdgeGraph](b)--(c);
        \draw[EdgeGraph](c)--(d);
    \end{tikzpicture},
    \qquad \quad
    \begin{tikzpicture}[Centering,scale=.35]
        \node[UnlabeledNodeGraph](a)at(0,0){};
        \node[UnlabeledNodeGraph](b)at(-1,0){};
        \node[UnlabeledNodeGraph](c)at(0,1){};
        \node[UnlabeledNodeGraph](d)at(1,0){};
        \node[UnlabeledNodeGraph](e)at(-2,0){};
        \draw[EdgeGraph](a)--(b);
        \draw[EdgeGraph](a)--(c);
        \draw[EdgeGraph](a)--(d);
        \draw[EdgeGraph](b)--(e);
    \end{tikzpicture},
    \qquad \quad
    \begin{tikzpicture}[Centering,scale=.35]
        \node[UnlabeledNodeGraph](a)at(0,0){};
        \node[UnlabeledNodeGraph](b)at(-1,0){};
        \node[UnlabeledNodeGraph](c)at(0,1){};
        \node[UnlabeledNodeGraph](d)at(1,0){};
        \node[UnlabeledNodeGraph](e)at(0,-1){};
        \node[UnlabeledNodeGraph](f)at(-2,0){};
        \draw[EdgeGraph](a)--(b);
        \draw[EdgeGraph](a)--(c);
        \draw[EdgeGraph](a)--(d);
        \draw[EdgeGraph](a)--(e);
        \draw[EdgeGraph](b)--(f);
    \end{tikzpicture},
    \enspace
    \begin{tikzpicture}[Centering,scale=.4]
        \node[UnlabeledNodeGraph](a)at(0,0){};
        \node[UnlabeledNodeGraph](b)at(-1,0){};
        \node[UnlabeledNodeGraph](c)at(1,-1){};
        \node[UnlabeledNodeGraph](d)at(1,1){};
        \node[UnlabeledNodeGraph](e)at(-2,-1){};
        \node[UnlabeledNodeGraph](f)at(-2,1){};
        \draw[EdgeGraph](a)--(b);
        \draw[EdgeGraph](a)--(c);
        \draw[EdgeGraph](a)--(d);
        \draw[EdgeGraph](b)--(e);
        \draw[EdgeGraph](b)--(f);
    \end{tikzpicture},
    \enspace
    \begin{tikzpicture}[Centering,scale=.4]
        \node[UnlabeledNodeGraph](a)at(0,0){};
        \node[UnlabeledNodeGraph](b)at(1,0){};
        \node[UnlabeledNodeGraph](c)at(2,0){};
        \node[UnlabeledNodeGraph](d)at(3,0){};
        \node[UnlabeledNodeGraph](e)at(4,0){};
        \node[UnlabeledNodeGraph](f)at(2,1){};
        \draw[EdgeGraph](a)--(b);
        \draw[EdgeGraph](b)--(c);
        \draw[EdgeGraph](c)--(d);
        \draw[EdgeGraph](d)--(e);
        \draw[EdgeGraph](c)--(f);
    \end{tikzpicture}.
\end{split}\end{equation}
As with the previous family~\eqref{equ:generators_G}, this does not give to us any insight on a possible family of generators.


\subsection{Finitely generated sub-operads}
Since finding family of generators seems out of reach, let us now directly focus on finitely generated sub-operads of $\MG$. In particular we will study the 
operads generated by:
\begin{enumerate}
\item 
\begin{math}
\left\{
\begin{tikzpicture}[Centering,scale=.6]
    \node[NodeGraph](a)at(0,0){$a$};
    \node[NodeGraph](b)at(1,0){$b$};
\end{tikzpicture}
\right\}
\end{math} which we denote by $\G_{\emptyset}$ and which is isomorphic to $\Com$,
\item \begin{math}\left\{
\begin{tikzpicture}[Centering,scale=.6]
    \node[NodeGraph](a)at(0,0){$a$};
    \node[NodeGraph](b)at(1,0){$b$};
    \draw[EdgeGraph](a)--(b);
\end{tikzpicture}
\right\}
\end{math} which we denote by $\Seg$ and which is isomorphic to $\ComMag$,
\item 
\begin{math}
\left\{
\begin{tikzpicture}[Centering,scale=.6]
    \node[NodeGraph](a)at(0,0){$a$};
    \node[NodeGraph](b)at(1,0){$b$};
\end{tikzpicture},
\begin{tikzpicture}[Centering,scale=.6]
    \node[NodeGraph](a)at(0,0){$a$};
    \node[NodeGraph](b)at(1,0){$b$};
    \draw[EdgeGraph](a)--(b);
\end{tikzpicture}
\right\}
\end{math} which we denote by $\SP$,
\item \begin{math}
\left\{
\begin{tikzpicture}[Centering,scale=.6]
    \tikzset{every loop/.style={}}
    \node[NodeGraph](a)at(0,0){$a$};
    \draw[EdgeGraph](a)edge[loop](a);
\end{tikzpicture},
\begin{tikzpicture}[Centering,scale=.6]
    \node[NodeGraph](a)at(0,0){$a$};
    \node[NodeGraph](b)at(1,0){$b$};
\end{tikzpicture}
\right\}
\end{math} which we denote by $\LP$.
\end{enumerate}

First, note that the sub-operad $\G_{\emptyset}$ generated by
\begin{math}
\left\{
\begin{tikzpicture}[Centering,scale=.6]
    \node[NodeGraph](a)at(0,0){$a$};
    \node[NodeGraph](b)at(1,0){$b$};
\end{tikzpicture}
\right\}
\end{math}
is isomorphic to the commutative operad $\Com$. Indeed, recall from Example~\ref{ex_gen_com} that $\Com$ is the quotient of the free operad over one symmetric
element of size two $\Sym_2$ by the associativity relation. By definition $\G_{\emptyset}$ is also generated by one symmetric element of size $2$, and furthermore we have:
\begin{equation}\label{eq_iso_com}
    \begin{tikzpicture}[Centering,scale=.7]
        \node[NodeGraph](a)at(0,0){$a$};
        \node[NodeGraph](s)at(1,0){$\ast$};
    \end{tikzpicture}
    \enspace \comp \enspace
    \begin{tikzpicture}[Centering,scale=.7]
        \node[NodeGraph](b)at(0,0){$b$};
        \node[NodeGraph](c)at(1,0){$c$};
    \end{tikzpicture}
    \enspace = \enspace
    \begin{tikzpicture}[Centering,scale=.7]
        \node[NodeGraph](a)at(0,0){$a$};
        \node[NodeGraph](b)at(1,0){$b$};
        \node[NodeGraph](c)at(2,0){$c$};
    \end{tikzpicture}
    \enspace = \enspace
    \begin{tikzpicture}[Centering,scale=.7]
        \node[NodeGraph](s)at(0,0){$\ast$};
        \node[NodeGraph](c)at(1,0){$c$};
    \end{tikzpicture}
    \enspace \comp \enspace
    \begin{tikzpicture}[Centering,scale=.7]
        \node[NodeGraph](a)at(0,0){$a$};
        \node[NodeGraph](b)at(1,0){$b$};
    \end{tikzpicture},
\end{equation}
which is the associativity relation. Hence $\G_{\emptyset}\cong\Com$. This could also be observed from the fact that we clearly have $\G_{\emptyset}[V]=\K\emptyset_V$ and
hence the map $\emptyset_V\mapsto\mu_V$ implies an isomorphism from $\G_{\emptyset}$ to $\Com$. 

The other three cases are more involved.
\begin{proposition} \label{prop_commag}
    The sub-operad $\Seg$ of $\G$ generated by
    \begin{math}
        \left\{
        \begin{tikzpicture}[Centering,scale=.6]
            \node[NodeGraph](a)at(0,0){$a$};
            \node[NodeGraph](b)at(1,0){$b$};
            \draw[EdgeGraph](a)--(b);
        \end{tikzpicture}
        \right\}
    \end{math}
    is isomorphic to $\ComMag$.
\end{proposition}

\begin{proof}
    We know from Proposition~\ref{prop_prelie} that $\Seg$ is isomorphic to the sub-operad of $\PLie$ generated by 
    \begin{equation}
        \left\{
        \begin{tikzpicture}[Centering,scale=.6]
            \node[RootGraph](a)at(0,0){$a$};
            \node[NodeGraph](b)at(0,-1){$b$};
            \draw[EdgeGraph](a)--(b);
        \end{tikzpicture}
        +
        \begin{tikzpicture}[Centering,scale=.6]
            \node[RootGraph](a)at(0,0){$b$};
            \node[NodeGraph](b)at(0,-1){$a$};
            \draw[EdgeGraph](a)--(b);
        \end{tikzpicture}
        \right\}
    \end{equation}
    Then~\cite{BL11} gives us that the map which sends the above element to $s_{\set{a,b}}\in\Sym_2[\set{a,b}]$ induces an isomorphism between the sub-operad and $\ComMag$. 
    This concludes the proof.
\end{proof}

Remark that while $\ComMag\cong\Seg$, the image of an element in the canonical basis of $\ComMag[V]$, i.e. a binary tree with $V$ as set of leaves, 
is not a single graph but a sum of trees with vertex set $V$.

This result suggests the following more general conjecture.
\begin{conjecture}
    The sub-operad of $\G$ generated by the complete graphs $K_V$ is isomorphic to $\free{\Sym_n}$, the free operad over one symmetric element of size $n=\card{V}$.
\end{conjecture}
Proving this would require showing that there are no relations involving only complete graphs of size $n$ in $\G$ which is highly non trivial. In fact this was avoided
in the proof of Proposition~\ref{prop_commag} by using the results of \cite{BL11} which is somewhat equivalent in the case of $n=2$.
    
The isomorphisms $\Com\cong\G_{\emptyset}$ and $\ComMag\cong\Seg$ allow us to see $\Com$ and $\ComMag$ as disjoint sub-operads of $\G$ and hence gives us a natural 
way to define the smallest operad containing these two as disjoint sub-operads. Denote by $\mathcal{G}$ the sub-species of $\G$ generated by
\begin{math}
    \left\{
    \begin{tikzpicture}[Centering,scale=.6]
        \node[NodeGraph](a)at(0,0){$a$};
        \node[NodeGraph](b)at(1,0){$b$};
    \end{tikzpicture},
    \begin{tikzpicture}[Centering,scale=.6]
        \node[NodeGraph](a)at(0,0){$a$};
        \node[NodeGraph](b)at(1,0){$b$};
        \draw[EdgeGraph](a)--(b);
    \end{tikzpicture}
    \right\}
\end{math}
and $\SP$ the sub-operad of $\mathcal{G}$ generated by these two elements. This operad has some interesting properties. Recall from Remark~\ref{sec_operads} 
that we use the notation $\comp^{\xi}$ for the grafting of tree in a free operad and that we denote the equivalence class of $x$ by $\iso{x}$.

\begin{proposition}\label{prop_sp}
    The three following operads are isomorphic
    \begin{itemize}
        \item $\SP$
        \item $\Ope(\mathcal{G},\spe{R})$ where $\spe{R}$ is the subspecies of $\free{\mathcal{G}}$ generated by
        \begin{subequations}
            \begin{equation} \label{equ:rel_1}
                \Points{c}{\ast} \comp^{\xi} \Points{a}{b}
                \enspace - \enspace
                \Points{a}{\ast} \comp^{\xi} \Points{b}{c},
            \end{equation}
            \begin{center}
                and
            \end{center}
            \begin{equation} \label{equ:rel_2}
                \Segment{a}{\ast} \comp^{\xi} \Points{b}{c}
                \enspace - \enspace
                \Points{c}{\ast} \comp^{\xi} \Segment{a}{b}
                \enspace - \enspace
                \Points{b}{\ast} \comp^{\xi} \Segment{a}{c}.
            \end{equation}
        \end{subequations}
        \item $\Com(\ComMag)$, the assemblies of $\ComMag$.
    \end{itemize}
    In particular, $\SP$ is binary and quadratic.
\end{proposition}

\begin{proof}
    The element $\iso{\Points{a}{b}}$ of $\Ope(\mathcal{G},\mathcal{R})$ is symmetric of size 2 and follows the associativity relation~\eqref{equ:rel_1}.
    Hence the sub-operad of $\Ope(\mathcal{G},\mathcal{R})$ generated by $\iso{\Points{a}{b}}$ is equal to $\Com$ and all the trees in $\free{\mathcal{G}}$ 
    with $V$ as leaves and whose labels are all empty graphs over two points are sent over $\mu_V$ when passing to the quotient. In the same way, the 
    element $\iso{\Segment{a}{b}}$ of $\Ope(\mathcal{G},\mathcal{R})$ is symmetric of size 2 and do not follow any relation involving only itself, 
    hence the sub-operad of $\Ope(\mathcal{G},\mathcal{R})$ generated by $\iso{\Segment{a}{b}}$ is equal to $\ComMag$ and 
    $\iso{\Segment{a}{b}}=s_{\set{a,b}}\in\ComMag[\set{a,b}]$.

    There is a natural epimorphism $\phi$ from $\free{\mathcal{G}}$ to $\SP$ which is the identity on 
    \begin{math}
        \begin{tikzpicture}[Centering,scale=.6]
            \node[NodeGraph](a)at(0,0){$a$};
            \node[NodeGraph](b)at(1,0){$b$};
        \end{tikzpicture}
    \end{math}
    and
    \begin{math}
        \begin{tikzpicture}[Centering,scale=.6]
            \node[NodeGraph](a)at(0,0){$a$};
            \node[NodeGraph](b)at(1,0){$b$};
            \draw[EdgeGraph](a)--(b);
        \end{tikzpicture}
    \end{math}
    and which sends a partial composition $g_1\comp^{\xi} g_2$ on the partial composition $g_1\comp g_2$. We already proved by \eqref{eq_iso_com} that
    the vector~\eqref{equ:rel_1} is in the kernel $\phi$. The case of \eqref{equ:rel_2} is also straightforward:
    \begin{equation}\begin{split}
        \begin{tikzpicture}[Centering,scale=.7]
            \node[NodeGraph](a)at(0,0){$a$};
            \node[NodeGraph](s)at(1,0){$\ast$};
            \draw[EdgeGraph](a)--(s);
        \end{tikzpicture}
        \enspace \comp \enspace
        \begin{tikzpicture}[Centering,scale=.7]
            \node[NodeGraph](b)at(0,0){$b$};
            \node[NodeGraph](c)at(1,0){$c$};
        \end{tikzpicture}
        \enspace &= \enspace
        \begin{tikzpicture}[Centering,scale=.7]
            \node[NodeGraph](a)at(0,0){$a$};
            \node[NodeGraph](b)at(1,0){$b$};
            \node[NodeGraph](c)at(2,0){$c$};
            \draw[EdgeGraph](a)--(b);
        \end{tikzpicture}
        \enspace + \enspace
        \begin{tikzpicture}[Centering,scale=.7]
            \node[NodeGraph](a)at(0,0){$a$};
            \node[NodeGraph](c)at(1,0){$c$};
            \node[NodeGraph](b)at(2,0){$b$};
            \draw[EdgeGraph](a)--(c);
        \end{tikzpicture}\\
        \enspace &= \enspace
        \begin{tikzpicture}[Centering,scale=.7]
            \node[NodeGraph](c)at(0,0){$c$};
            \node[NodeGraph](s)at(1,0){$\ast$};
        \end{tikzpicture}
        \enspace \comp \enspace
        \begin{tikzpicture}[Centering,scale=.7]
            \node[NodeGraph](a)at(0,0){$a$};
            \node[NodeGraph](b)at(1,0){$b$};
            \draw[EdgeGraph](a)--(b);
        \end{tikzpicture}
        \enspace + \enspace
        \begin{tikzpicture}[Centering,scale=.7]
            \node[NodeGraph](b)at(0,0){$b$};
            \node[NodeGraph](s)at(1,0){$\ast$};
        \end{tikzpicture}
        \enspace \comp \enspace
        \begin{tikzpicture}[Centering,scale=.7]
            \node[NodeGraph](a)at(0,0){$a$};
            \node[NodeGraph](c)at(1,0){$c$};
            \draw[EdgeGraph](a)--(c);
        \end{tikzpicture}.
    \end{split}\end{equation}
    To conclude that $\SP\cong \Ope(\mathcal{G},\spe{R})$, we must now show that for any $w\in\Ope(\K\mathcal{G}, \mathcal{R})[V]$, $\phi(w)=0$ implies $w=0$.
    To do this, we first prove the bijection $\Ope(\mathcal{G},\spe{R})\cong \Com(\ComMag)$.
    Because of \eqref{equ:rel_2}, we have that the vector $s_{\set{a,\ast}}\comp^{\xi}\mu_{\set{b,c}}$ is equal to the vector 
    $\mu_{\set{b,\ast}}\comp^{\xi}s_{\set{a,c}} + \mu_{\set{c,\ast}}\comp^{\xi}s_{\set{a,b}}$ in $\Ope(\mathcal{G}, \mathcal{R})$. Hence, by iterating 
    this process, we get that all elements of $\Ope(\mathcal{G}, \mathcal{R})$ can be written as a sum of equivalence classes of trees where no $s$ vertex has 
    a $\mu$ vertex as descendent. This means that $\Ope(\mathcal{G}, \mathcal{R})[V]$ has the following generating family (cf Figure~\ref{fig_gen_fam} for
    an example): 
    \begin{equation}\label{sp_eq_com_commag}
        \set{\mu_{\pi}\comp[]^\xi (t_1, t_2\dots t_k)\,|\, \mu_{\pi}\in\Com[\pi],\, t_i\in \ComMag[V_i]}_{\pi = \set{V_1,\dots V_k}\text{ partition of $V$}},
    \end{equation}
    where $\mu_{\pi}\comp[]^\xi (t_1, t_2\dots t_k)$ stands for $(\dots((\mu_{\pi}\comp[V_1]^{\xi} t_1)\comp[V_2]^{\xi} t_2)\dots)\comp[V_k]^{\xi}t_k$.
    \begin{figure}[htbp]
        \begin{equation*}
            \left(\left(
            \begin{tikzpicture}[Centering,xscale=0.6,yscale=0.35]
                \node[NodeFree](x)at(0,0){$\mu$};
                \node[NodeFree](y)at(-1,2){$\mu$};
                \node[NodeFree, draw=white](1)at(-2,4){$1$};
                \node[NodeFree, draw=white](2)at(0,4){$2$};
                \node[NodeFree, draw=white](3)at(1,2){$3$};
                \draw[EdgeFree](x)--(y);
                \draw[EdgeFree](x)--(3);
                \draw[EdgeFree](1)--(y);
                \draw[EdgeFree](2)--(y);
            \end{tikzpicture}
            \enspace \comp[1]^\xi \enspace
            \begin{tikzpicture}[Centering,xscale=0.6,yscale=0.35]
                \node[NodeFree](x)at(0,0){$s$};
                \node[NodeFree, draw=white](a)at(-1,2){$a$};
                \node[NodeFree, draw=white](b)at(1,2){$b$};
                \draw[EdgeFree](x)--(a);
                \draw[EdgeFree](x)--(b);
            \end{tikzpicture}
            \enspace \right) \comp[2]^\xi \enspace
            \begin{tikzpicture}[Centering,scale=0.6]
                \node[NodeFree, draw=white](a)at(0,0){$c$};
            \end{tikzpicture}
            \enspace \right) \comp[3]^\xi \enspace
            \begin{tikzpicture}[Centering,xscale=0.6,yscale=0.35]
                \node[NodeFree](x)at(0,0){$s$};
                \node[NodeFree](y)at(1,2){$s$};
                \node[NodeFree, draw=white](d)at(0,4){$d$};
                \node[NodeFree, draw=white](e)at(2,4){$e$};
                \node[NodeFree, draw=white](f)at(-1,2){$f$};
                \draw[EdgeFree](x)--(y);
                \draw[EdgeFree](x)--(f);
                \draw[EdgeFree](d)--(y);
                \draw[EdgeFree](e)--(y);
            \end{tikzpicture}
            = 
        \end{equation*}
        \begin{equation}\label{fig_gen_fam}
            \begin{tikzpicture}[Centering,xscale=0.6,yscale=0.35]
                \node[NodeFree](x)at(0,0){$\mu$};
                \node[NodeFree](y)at(-1,2){$\mu$};
                \node[NodeFree, draw=white](2)at(-0.5,4){$c$};
                \node[NodeFree, draw=white](3)at(1,2){$3$};
                \node[NodeFree](1)at(-2,4){$s$};
                \node[NodeFree, draw=white](a)at(-3,6){$a$};
                \node[NodeFree, draw=white](b)at(-1,6){$b$};
                \draw[EdgeFree](1)--(a);
                \draw[EdgeFree](1)--(b);
                \node[NodeFree](3)at(1,2){$s$};
                \node[NodeFree](4)at(2,4){$s$};
                \node[NodeFree, draw=white](d)at(1,6){$d$};
                \node[NodeFree, draw=white](e)at(3,6){$e$};
                \node[NodeFree, draw=white](f)at(0.5,4){$f$};
                \draw[EdgeFree](3)--(4);
                \draw[EdgeFree](3)--(f);
                \draw[EdgeFree](d)--(4);
                \draw[EdgeFree](e)--(4);
                \draw[EdgeFree](x)--(y);
                \draw[EdgeFree](x)--(3);
                \draw[EdgeFree](1)--(y);
                \draw[EdgeFree](2)--(y);
            \end{tikzpicture}
        \end{equation}
        \caption{An element in the generating family of $\Ope(\mathcal{G}, \mathcal{R})[\set{a,b,c,d,e,f}]$.}
    \end{figure}
    To conclude that $\Ope(\mathcal{G},\spe{R})\cong \Com(\ComMag)$ just remark that the partial composition of $\Ope(\mathcal{G},\spe{R})$ acts the same
    way than the partial composition over assemblies of $\ComMag$.

    Let now be $w$ of the form $\sum_{i=1}^{l}a_iw_i$ where for each $1\leq i\leq l$, $a_i\in \K$ and there is a partition of $\pi_i=\set{V_{i,1},\dots, V_{i,k_i}}$ 
    of $V$ such that $w_i=(\dots(\mu_{\pi_i}\comp[ ]^\xi (t_{i,1}\dots t_{i,k_i})$ with $t_{i,j}\in\ComMag[V_{i,j}]$.
    For $i\in[l]$, the image of $\mu_{\pi_i}$ by $\phi$ is the empty graph over $\pi_i$, and so the image of $w_i$ is equal to: 
    \begin{math}
        \bigsqcup_{j=1}^{k_i}\phi(t_{i,j}).
    \end{math}
    Hence, for $i\not=j$ two indices, if $\pi_i\not= \pi_j$ the support of $\phi(w_i)$ and $\phi(w_j)$ are disjoint. We can then restrict ourselves 
    to the case where all the $w_i$ are on the same partition of $V$ \textit{i.e.} $\pi_i=\set{V_1,\dots, V_k}$ for all indices $i$.

    Denote by $\G[V_1,\dots,V_k]$ the vector space $\K\set{g_1\cup\dots\cup g_k\,|\,g_i\in\G[V_i]}$. Then there is an isomorphism
    from $\G[V_1,\dots,V_k]$ to $\G[V_1]\otimes\dots\otimes\G[V_k]$ defined by $g_1\cup\dots\cup g_k \mapsto g_1\otimes\dots\otimes g_k$
    which sends $\phi(w)$ on $\sum_{i=1}^l a_i \bigotimes_{j=1}^{k}\phi(t_{i,j})$. By definition, $\phi$ sends the elements $t_{i,j}$ on
    images of the basis elements of $\ComMag[V_i]\hookrightarrow\G[V_i]$ and hence form a free family in $\G[V_i]$. Hence the tensor products 
    $\bigotimes_{j=1}^{k}\phi(t_{i,j})$ also form a free family of $\G[V_1]\otimes\dots\otimes\G[V_k]$ and so $\phi(w)=0$ if and only if $w=0$.
    This concludes the proof.
\end{proof}

\begin{remark}
    The elements of $\Com(\ComMag)$ are assemblies of $\ComMag$ which can be interpreted as forest of binary trees.
\end{remark}

From now on we identify $\Points{a}{b}$ and $\Segment{a}{b}$ with their respective image in $\Com$ and $\ComMag$: $\mu_{\set{a,b}}$ and $s_{\set{a,b}}$. 
We now exhibit the Koszul dual of $\SP$. 

\begin{proposition}\label{sp_dual}
    The operad $\SP$ admits as Koszul dual the operad $\SP^!$ which is isomorphic to the operad $\Ope((\mathcal{G})^{\vee}, \spe{R})$ where 
    $\spe{R}$ is the subspecies of $\free{\mathcal{G}^\vee}$ generated by
    \begin{subequations}
        \begin{equation} \label{equ:rel_dual_1}
            \Segment{a}{\ast}^{\vee} \comp^{\xi} \Segment{b}{c}^{\vee},
        \end{equation}
        \begin{equation} \label{equ:rel_dual_2}
            \Points{a}{\ast}^{\vee} \comp^{\xi} \Segment{b}{c}^{\vee}
            \enspace + \enspace
            \Segment{c}{\ast}^{\vee} \comp^{\xi} \Points{a}{b}^{\vee}
            \enspace + \enspace
            \Segment{b}{\ast}^{\vee} \comp^{\xi} \Points{a}{c}^{\vee},
        \end{equation}
        \begin{equation} \label{equ:rel_dual_3}
            \Points{a}{\ast}^{\vee} \comp^{\xi} \Points{b}{c}^{\vee}
            +
            \Points{c}{\ast}^{\vee} \comp^{\xi} \Points{a}{b}^{\vee}
            +
            \Points{b}{\ast}^{\vee} \comp^{\xi} \Points{c}{a}^{\vee}.
        \end{equation}
    \end{subequations}
\end{proposition}

\begin{proof}
    Let us respectively denote by $r_1$ and $r_2$ and $r'_1$, $r'_2$, and $r'_3$ the vectors \eqref{equ:rel_1}, \eqref{equ:rel_2}, \eqref{equ:rel_dual_1}, 
    \eqref{equ:rel_dual_2}, and~\eqref{equ:rel_dual_3}. Denote by $\mathcal{I}$ the operad ideal generated by~$r_1$ and~$r_2$. As a vector space, 
    $\mathcal{I}[[\set{a,b,c}]]$ is then the linear span of the set
    \begin{equation}
        \set{r_1, (ab)\cdot r_1, r_2, (abc) \cdot r_2, (acb) \cdot r_2},
    \end{equation}
    where $\cdot$ is the action of the symmetric group, \textit{e.g} $r_1\cdot (ab) = \free{\mathcal{G}}[(ab)](r_1)$. 
    This space is a sub-space of dimension $5$ of $\free{\mathcal{G}}[\set{a,b,c}]$, which is of dimension $12$. Hence, since as a vector space we have
    \begin{equation}
        \free{\mathcal{G}^\vee}[\set{a,b,c}]
        \cong \free{\mathcal{G}^*}[\set{a,b,c}]\cong
        \free{\mathcal{G}}[\set{a,b,c}],
    \end{equation}
    we conclude that $\mathcal{I}^{\bot}[\set{a,b,c}]$ must be of dimension 7.

    Denote by $\mathcal{J}$ the ideal generated by $r_1'$, $r_2'$ and $r_3'$. As a vector space ,$\mathcal{J}[\set{a,b,c}]$ is then the linear span of the set
    \begin{equation}
        \set{ r_1', (ab) \cdot r_1', (ac) \cdot r_1',r_2', (abc)\cdot r_2', (acb)\cdot r_2', r_3'}.
    \end{equation}
    This vector space is of dimension 7. To conclude, we need to show that for any elements $f$ of $\mathcal{J}[\set{a,b,c}]$ and $x$ of $\mathcal{I}[\set{a,b,c}]$ we have 
    $\scalar{f}{x}=0$. For every pair $\alpha < \beta$  in $\set{a,b,c,\ast}$ ordered with the alphabetical order ($c<\ast$) denote 
    by $s_{\alpha\beta}^{\vee}$ the dual of $s_{\set{\alpha,\beta}}$ and by $\mu_{\alpha\beta}^{\vee}$ the dual of $\mu_{\set{\alpha,\beta}}$. Among the 21 cases to 
    check, we have for example:
    \begin{equation}\begin{split}
        \scalar{ r_1'}{r_1 } &=
        \scalar{ s_{a\ast}^{\vee}\comp^{\xi} s_{bc}^{\vee}}{\mu_{\set{\ast,c}}\comp^{\xi}\mu_{\set{a,b}} -
        \mu_{\set{a,\ast}}\comp^{\xi} \mu_{\set{b,c}} } \\
        &= \scalar{ s_{a\ast}^{\vee}\comp^{\xi} s_{bc}^{\vee}}{ \mu_{\set{\ast,c}}\comp^{\xi}\mu_{\set{a,b}} } - \scalar{ s_{a\ast}^{\vee}\comp^{\xi} s_{bc}^{\vee}}{\mu_{\set{a,\ast}}\comp^{\xi} \mu_{\set{b,c}}} \\
        &= s_{a\ast}^{\vee}(\mu_{\set{\ast,c}})s_{bc}^{\vee}(\mu_{\set{a,b}}) -
        s_{a\ast}^{\vee}(\mu_{\set{a,\ast}})s_{bc}^{\vee}(\mu_{\set{b,c}}) = 0,
    \end{split}\end{equation}
    and
    \begin{equation}\begin{split}
        \scalar{ (abc)\cdot r_2')&}{r_2}  = \\ 
        &\scalar{
            \mu_{b\ast}^{\vee}\comp^{\xi}s_{ca}^{\vee}+s_{a\ast}^{\vee}\comp^{\xi}\mu_{bc}^{\vee}+s_{c\ast}^{\vee}\comp^{\xi}\mu_{ab}^{\vee}
            }{
            s_{\set{a,\ast}}\comp \mu_{\set{b,c}}-\mu_{\set{c,\ast}})\comp s_{\set{a,b}}-\mu_{\set{b,\ast}}\comp s_{\set{c,a}}
            }  \\
        &= \mu_{b\ast}^{\vee}(s_{\set{a,\ast}})s_{ca}^{\vee}(\mu_{\set{b,c}}) 
        -\mu_{b\ast}^{\vee}(\mu_{\set{c,\ast}}))s_{ca}^{\vee}(s_{\set{a,b}}) 
        -\mu_{b\ast}^{\vee}(\mu_{\set{b,\ast}})s_{ca}^{\vee}(s_{\set{c,a}}) \\
        &+ s_{a\ast}^{\vee}(s_{\set{a,\ast}})\mu_{bc}^{\vee}(\mu_{\set{b,c}}) 
        - s_{a\ast}^{\vee}(\mu_{\set{c,\ast}}))\mu_{bc}^{\vee}(s_{\set{a,b}}) 
        - s_{a\ast}^{\vee}(\mu_{\set{b,\ast}})\mu_{bc}^{\vee}(s_{\set{c,a}}) \\
        &+ s_{c\ast}^{\vee}(s_{\set{a,\ast}})\mu_{ab}^{\vee}(\mu_{\set{b,c}}) 
        - s_{c\ast}^{\vee}(\mu_{\set{c,\ast}}))\mu_{ab}^{\vee}(s_{\set{a,b}}) 
        - s_{c\ast}^{\vee}(\mu_{\set{b,\ast}})\mu_{ab}^{\vee}(s_{\set{c,a}}) \\
        &= -1 +1 = 0.
    \end{split}\end{equation}
    We leave the verification of the 19 remaining cases as an exercise to the interested reader.
\end{proof}

In order to compute the Hilbert series of $\SP^!$ we need to use identity \eqref{hdual}
and hence to prove that the operad $\SP$ is Koszul.

\begin{proposition}\label{sp_koszul}
    The operad $\SP$ is Koszul.
\end{proposition}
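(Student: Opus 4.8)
The plan is to invoke the rewriting-system (Gröbner basis) criterion for Koszulness due to Hoffbeck, as cited in \cite{Hof10}: a quadratic operad presented by generators $G$ and relations $R$ is Koszul if the relations, after choosing a suitable admissible order on the tree monomials of $\FreeOp_G$, form a \emph{confluent} (convergent) rewriting system, i.e.\ the ideal $(R)$ admits a quadratic Gröbner basis. Concretely, $\SP=\Ope(G,R)$ with $G$ spanned by the two binary generators $p=\Points{a}{b}$ (symmetric, ``no edge'') and $s=\Segment{a}{b}$ (symmetric, ``edge''), and $R$ generated by the two relations \eqref{equ:rel_1} and \eqref{equ:rel_2}. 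First I would fix a total order on the two generators, say $s < p$, and extend it to an admissible order (e.g.\ path-lexicographic order) on the planar-tree monomials with two internal vertices, so that each of the two relations has a well-defined leading term; relation \eqref{equ:rel_1} rewrites one $p$-over-$p$ monomial into another, and relation \eqref{equ:rel_2} rewrites the leading $p$-over-$s$ (or $s$-over-$p$) monomial into a combination of the others.

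The core of the argument is then the confluence check: I would enumerate all the \emph{critical pairs} (overlapping ambiguities) arising from superposing two leading monomials, which for a binary quadratic operad means looking at arity-$4$ tree monomials (three internal vertices) built from the leading terms of the relations, together with the symmetric-group twists of the relations. For each critical pair one reduces both ways using the rewriting rules and the $S_n$-action and checks that the two normal forms coincide. Since the generating set $R$ has only two elements (up to the symmetric action giving $5$ twisted relations total in arity $3$, as computed in the Koszul-dual proposition), the number of critical pairs to resolve is small and finite; each resolution is a short linear-algebra computation in $\FreeOp_G^{(3)}$. If all critical pairs resolve, the rewriting system is convergent, $R$ is a quadratic Gröbner basis, and Hoffbeck's criterion yields that $\SP$ is Koszul.

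As a consistency cross-check on the computation, I would verify that the dimensions of the degree pieces (number of normal-form monomials in each arity) match the Hilbert-series prediction obtained from \eqref{hdual} using the already-computed presentation of $\SP^!=\Ope(G^\vee,R^\perp)$; any discrepancy would signal a missing critical pair or a wrong choice of order, in which case one either completes the rewriting system (Buchberger/Knuth--Bendix completion — adding the unresolved S-polynomials as new rules) or changes the admissible order. The main obstacle I anticipate is precisely making the right choice of admissible order so that the two relations \eqref{equ:rel_1}--\eqref{equ:rel_2} are already self-confluent without needing completion: the presence of two distinct binary generators with the $p$-over-$p$ and mixed $p$-over-$s$ overlaps means one must be careful that the leading terms chosen do not create an irreducible critical pair; a poor order could force the Gröbner basis to be larger than quadratic, which would break the argument. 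Verifying that the natural order (with $s<p$) works, by explicitly resolving the handful of arity-$4$ critical pairs, is the real content of the proof.
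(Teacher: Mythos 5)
Your proposal is correct and follows essentially the same route as the paper: both invoke Hoffbeck's criterion from \cite{Hof10} with the order placing the edge generator below the edgeless one ($s<p$, the paper's $t<p$). The paper simply states the outcome of that choice by exhibiting the resulting normal-form (PBW) basis explicitly --- trees with a layer of $p$-labelled vertices above $\ComMag$-trees of $s$-labelled vertices, reflecting $\SP\cong E(\ComMag)$ --- whereas you describe the equivalent confluence/critical-pair verification that would establish it.
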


\begin{proof}
Let $\mathcal{R}$ be the species defined as in Proposition~\ref{prop_sp} so that $\SP\cong\Ope(\K\mathcal{G},\mathcal{R})$.
Denote by $\mathfrak{p}(a,b)$ and $\mathfrak{s}(a,b)$ the elements of $\mathcal{G}^{\mathscr{F}}[\set{a,b},ab]$. Then the following vectors form a basis 
$\mathcal{B}$ of $\mathcal{R}^{\mathscr{F}}[\set{a,b,c},abc]$:
\begin{align}
    \mathfrak{v}_1 =\mathfrak{p}(\mathfrak{p}(a,b),c) - \mathfrak{p}(a,\mathfrak{p}(b,c))  \enspace &,\enspace 
    \mathfrak{v}_2=\mathfrak{p}(\mathfrak{p}(a,c),b) - \mathfrak{p}(a,\mathfrak{p}(b,c))\label{vec_points}\\
    \mathfrak{v'}_1=\mathfrak{s}(\mathfrak{p}(a,b),c) - &\mathfrak{p}(\mathfrak{s}(a,c),b) - \mathfrak{p}(a,\mathfrak{s}(b,c)) \label{vec_seg1} \\
    \mathfrak{v'}_2=\mathfrak{s}(\mathfrak{p}(a,c),b) - &\mathfrak{p}(\mathfrak{s}(a,b),c) - \mathfrak{p}(a, \mathfrak{s}(b,c))  \label{vec_seg2}\\
    \mathfrak{v'}_3=\mathfrak{s}(a,\mathfrak{p}(b,c)) - &\mathfrak{p}(\mathfrak{s}(a,b),c) - \mathfrak{p}(\mathfrak{s}(a,c),b).\label{vec_seg3}
\end{align}
We need to show that it is a Gröbner bases of $(\mathcal{R}^{\mathscr{F}})$. 
Let now consider the path-lexicographic ordering presented in subsubsection~\ref{sec_koszul} with $s>p$. Then the leading terms of $\mathfrak{v}_1$, $\mathfrak{v}_2$, $\mathfrak{v'}_1$, $\mathfrak{v'}_2$ 
and $\mathfrak{v'}_3$ are respectively $\mathfrak{p}(\mathfrak{p}(a,b),c)$, $\mathfrak{p}(\mathfrak{p}(a,c),b)$, $\mathfrak{s}(\mathfrak{p}(a,b),c)$, 
$\mathfrak{s}(\mathfrak{p}(a,c),b)$ and $\mathfrak{s}(a,p(b,c))$. We conclude with Proposition~\ref{prop_spol}. Indeed, it is shown in \cite{DK10} 
that the S-polynomials of pairs of elements in $\set{\mathfrak{v}_1,\mathfrak{v}_2}$ are congruent to zero modulo $\mathcal{B}$. We show for example that 
the S-polynomial of $ \mathfrak{v}_1$ and $\mathfrak{v'}_1$ corresponding to 
$\mathfrak{c}=\mathfrak{s}(\mathfrak{p}(\mathfrak{p}(a,b),c),d)\in\free{\mathcal{G}}^{sh}[\set{a,b,c,d},abcd]$ is congruent to zero.
We have
\begin{equation}\begin{split}
    m_{\mathfrak{c},\text{lt}(\mathfrak{v}_1)}(\mathfrak{v}_1) &= \mathfrak{c}-\mathfrak{s}(\mathfrak{p}(a,\mathfrak{p}(b,c)),d)\\
    m_{\mathfrak{c},\text{lt}(\mathfrak{v'}_1)}(\mathfrak{v'}_1) &= \mathfrak{c} - \mathfrak{p}(\mathfrak{s}(\mathfrak{p}(a,b),d),c)
    -\mathfrak{p}(\mathfrak{p}(a,b),\mathfrak{s}(c,d)),
\end{split}\end{equation}
which gives us
\begin{equation}\label{proof_spol}
    s_{\mathfrak{c}}(\mathfrak{v}_1,\mathfrak{v'}_1) =  \mathfrak{p}(\mathfrak{s}(\mathfrak{p}(a,b),d),c) + \mathfrak{p}(\mathfrak{p}(a,b),\mathfrak{s}(c,d))
    -\mathfrak{s}(\mathfrak{p}(a,\mathfrak{p}(b,c)),d).
\end{equation}
Let us look how each of the terms of $s_{\mathfrak{c}}(\mathfrak{v}_1,\mathfrak{v'}_1)$ reduces modulo $\mathcal{B}$:
\begin{equation}\begin{split}
    \mathfrak{p}(\mathfrak{s}(\mathfrak{p}(a,b),d),c)
    &\equiv_{\mathfrak{v'}_1} \mathfrak{p}(\mathfrak{p}(\mathfrak{s}(a,d),b),c) + \mathfrak{p}(\mathfrak{p}(a,\mathfrak{s}(b,d)),c) \\
    &\equiv_{\mathfrak{v}_1} \mathfrak{p}(\mathfrak{s}(a,d),\mathfrak{p}(b,c)) + \mathfrak{p}(a, \mathfrak{p}(\mathfrak{s}(b,d),c), \\
    & \\
    \mathfrak{p}(\mathfrak{p}(a,b),\mathfrak{s}(c,d)) &\equiv_{\mathfrak{v}_1} \mathfrak{p}(a, \mathfrak{p}(b,\mathfrak{s}(c,d)), \\
    & \\
    \mathfrak{s}(\mathfrak{p}(a,\mathfrak{p}(b,c)),d) &\equiv_{\mathfrak{v'}_1} \mathfrak{p}(\mathfrak{s}(a,d),\mathfrak{p}(b,c)) +
    \mathfrak{p}(a,\mathfrak{s}(\mathfrak{p}(b,c),d))\\
    &\equiv_{\mathfrak{v'}_1} \mathfrak{p}(\mathfrak{s}(a,d),\mathfrak{p}(b,c)) + \mathfrak{p}(a,\mathfrak{p}(\mathfrak{s}(b,d),c)) +
    \mathfrak{p}(a,\mathfrak{p}(b,\mathfrak{s}(c,d))).
\end{split}\end{equation}
Putting this together in \eqref{proof_spol} gives us that $s_{\mathfrak{c}}(\mathfrak{v}_1,\mathfrak{v'}_1)$ reduces to $0$ modulo $\mathcal{B}$.
We leave the verification of the other cases to the interested reader.
\end{proof}

\begin{proposition} \label{prop_sp_hilbert}
    The Hilbert series of $\SP^{!}$ is given
    \begin{equation}
        \hilbert{\SP^!}(x) = \dfrac{(1-\log(1-x))^2-1}{2}.
    \end{equation}
\end{proposition}

\begin{proof}
    The Hilbert series of $\ComMag$ is $\hilbert{\ComMag}(x) = 1-\sqrt{1-2x}$ 
    hence the Hilbert series of $\SP\cong \Com(\ComMag)$ is $\mathcal{H}_{\SP}(x)=e^{1-\sqrt{1-2x}}-1$, 
    where the $-1$ comes from the fact that we consider positive species. 
    We deduce the Hilbert series of $\SP^!$ from $\hilbert{\SP}$ and the identity \eqref{hdual}.
\end{proof}
The first dimensions $\dim \SP^![[n]]$ for $n\geq 1$ are
\begin{equation}
    1, 2, 5, 17, 74, 394, 2484, 18108, 149904.
\end{equation}
This is sequence~\OEIS{A000774} of~\cite{Slo}. This sequence is in particular linked to some
pattern avoiding signed permutations and mesh patterns.

Before ending this section let us mention the sub-operad $\LP$ of $\MG$ generated by
\begin{equation}
    \left\{
    \begin{tikzpicture}[Centering,scale=.6]
        \tikzset{every loop/.style={}}
        \node[NodeGraph](a)at(0,0){$a$};
        \draw[EdgeGraph](a)edge[loop](a);
    \end{tikzpicture},
    \begin{tikzpicture}[Centering,scale=.6]
        \node[NodeGraph](a)at(0,0){$a$};
        \node[NodeGraph](b)at(1,0){$b$};
    \end{tikzpicture}
    \right\}.
\end{equation}
This operad seems particularly interesting to us since its two generators can be considered as minimal
elements in the sense that a partial composition with the two isolated vertices adds exactly
one vertex and no edge, while a partial composition with the loop adds exactly one edge and
no vertex.  A natural question to ask at this point concerns the description of the
multigraphs generated by these two minimal elements.

\begin{proposition}
The following properties hold:
\begin{itemize}
\item the operad $\SP$ is a sub-operad of $\LP$;
\item the operad $\LP$ is a strict sub-operad of $\MG$. In particular, the multigraph
\begin{equation}
    \begin{tikzpicture}[Centering,scale=.8]
        \node[NodeGraph](a)at(0,0){$a$};
        \node[NodeGraph](b)at(1,0){$b$};
        \node[NodeGraph](c)at(2,0){$c$};
        \draw[EdgeGraph](a)--(b);
        \draw[EdgeGraph](b)edge[bend left=40](c);
        \draw[EdgeGraph](b)edge[bend right=40](c);
    \end{tikzpicture}
\end{equation}
is in $\MG$ but is not in $\LP$.
\end{itemize}
\end{proposition}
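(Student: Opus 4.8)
The plan is to treat the two assertions in turn.

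For $\SP\subseteq\LP$, recall that $\SP$ is the suboperad generated by $\Points{a}{b}$ and $\Segment{a}{b}$, of which the former is already one of the two generators of $\LP$; so it suffices to produce $\Segment{a}{b}$ inside $\LP[\{a,b\}]$. First I would write down three elements of the subspace $\LP[\{a,b\}]$ of $\K\MG[\{a,b\}]$: composing $\Points{a}{b}$ at either of its inputs with the loop generator yields a graph with a single loop on one vertex and the other vertex isolated, so both such graphs lie in $\LP[\{a,b\}]$; while composing the loop generator (at its unique vertex) with $\Points{a}{b}$ --- that is, redirecting the two half-edges of the loop independently onto $a$ and $b$ --- yields $(\text{loop on }a)+2\,\Segment{a}{b}+(\text{loop on }b)$. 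Subtracting the two single-loop graphs and dividing by $2$ exhibits $\Segment{a}{b}$ in $\LP[\{a,b\}]$, so $\LP$ contains both generators of $\SP$ and hence $\SP\subseteq\LP$.

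For $\LP\subsetneq\K\MG$, the starting point is that the partial composition of $\K\MG$ is additive on edges: every multigraph in the support of $g_1\circ_\ast g_2$ has exactly $|E(g_1)|+|E(g_2)|$ edges (deleting $\ast$ and reattaching a loose end never changes the number of edge-objects), whereas composing with $\Points{a}{b}$ raises the arity by one and the edge count by zero, and composing with the loop generator keeps the arity and adds one edge. Iterating over the composition tree, if the multigraph $g^\ast$ consisting of an edge $ab$ together with a double edge $bc$ lay in $\LP$, then it would be a linear combination of composites built over the leaf set $\{a,b,c\}$ from exactly two copies of $\Points{a}{b}$ and three copies of the loop generator. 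The binary skeleton of such a composite is the unique three-leaf caterpillar, and the three loop-nodes are unary vertices sitting above subtrees of it; a loop-node above a one-leaf subtree produces a loop on that leaf in every term of the composite, so since $g^\ast$ is loopless, only the composites in which all three loop-nodes sit above subtrees with at least two leaves can contribute. Up to the $S_3$-action these are the cases $C^S_{r,s}$ with $r$ loop-nodes above the root (leaf set $\{a,b,c\}$), $s$ loop-nodes above the cherry (leaf set a $2$-subset $S$), and $r+s=3$.

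The heart of the argument is to show $g^\ast$ is not in the linear span of the loopless parts of these composites. Write $\odot$ for disjoint edge-union of multigraphs and $\iota$ for the sum, in $\K\MG[\{a,b,c\}]$, of the three edges on $\{a,b,c\}$. One checks that composing with the loop generator on the left, restricted to the loopless part, is $\odot$-multiplication by $2\iota$ when the vertex set has three elements and by twice the unique edge when it has two; reorganising the composition tree, the loopless part of $C^S_{r,s}$ is a scalar multiple of $\iota^{\odot r}\odot e_S^{\odot s}$, where $e_S$ is the edge of $S$. This element is $S_3$-invariant when $s=0$ and is invariant under the transposition exchanging the two ends of $e_S$ when $s\ge1$; hence the ``sign'' functional $\Lambda$ on the space of loopless three-edge multigraphs over $\{a,b,c\}$ --- sending each graph of the $S_3$-orbit of ``a double edge together with a single edge'' to the signature of the permutation taking a fixed such graph to it, and sending all triple edges, the triangle, and every graph with a loop to $0$ --- annihilates every $C^S_{r,s}$, hence all of $\LP[\{a,b,c\}]$ in edge-degree $3$. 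But $\Lambda(g^\ast)=\pm1\ne0$, since $g^\ast$ is itself a graph of that orbit, and this contradiction gives $g^\ast\notin\LP$. I expect the bookkeeping of the composition trees --- which unary insertions are possible, and the verification that the loopless part of $C^S_{r,s}$ really collapses to $\iota^{\odot r}\odot e_S^{\odot s}$ --- to be the only delicate point; once it is settled, the vanishing of $\Lambda$ is purely formal.
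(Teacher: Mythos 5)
Your argument for the first item is essentially the paper's: the displayed relation there is exactly your expansion of the loop composed with the edgeless two-vertex generator, together with the (there implicit, here explicit) remark that the one-loop graphs with an isolated vertex lie in $\LP$ because they arise from composing the edgeless generator with the loop. For the second item you take a genuinely different route: the paper simply reports a computer-algebra check that the multigraph $ab\oplus bc\oplus bc$ is not a linear combination of the three-edge vectors of $\LP[\{a,b,c\}]$, whereas you give a structural hand proof, and your reduction is sound. Edge count equals the number of loop generators used and is additive under partial composition, so a degree-three element of $\LP[\{a,b,c\}]$ comes from composites using two edgeless generators and three loops; a loop inserted over a one-leaf subtree puts a loop on that leaf in every term, so such composites die under the projection onto loopless multigraphs; and the surviving composites $C^S_{r,s}$ have loopless part $2^{3}\,\iota^{\odot r}\odot e_S^{\odot s}$ (your $\odot$ being the bilinear extension of $\oplus$), which is invariant under the transposition exchanging the two vertices of $S$, and under all of $S_3$ when $s=0$. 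Since the $S_3$-action on the six ``double edge plus single edge'' multigraphs is simply transitive, your functional $\Lambda$ is well defined and satisfies $\Lambda\circ\sigma=\mathrm{sign}(\sigma)\,\Lambda$, hence kills every transposition-invariant vector and therefore all of $\LP[\{a,b,c\}]$ in this degree, while it takes the value $\pm 1$ on the multigraph of the statement; in effect you exhibit the sign-isotypic piece of the degree-three component that $\LP$ misses. Comparing the two: the paper's verification is shorter to state and yields the complete list of degree-three elements of $\LP[\{a,b,c\}]$ as a by-product, while your argument is self-contained, checkable by hand, and explains conceptually why this multigraph cannot be generated; the only points needing care are exactly the ones you flag, namely the bookkeeping of where unary loop-nodes may sit and the collapse of the loopless part of $C^S_{r,s}$, both of which work out as you describe.
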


\begin{proof}
    \begin{itemize}
        \item The following identity shows that 
        \begin{math}
        \begin{tikzpicture}[Centering,scale=.6]
            \node[NodeGraph](a)at(0,0){$a$};
            \node[NodeGraph](b)at(1,0){$b$};
            \draw[EdgeGraph](a)--(b);
        \end{tikzpicture}
        \end{math}
        is in $\LP[\set{a,b}]$ and hence that $\SP$ is a sub-operad of $\LP$:
        \begin{equation}
            \begin{tikzpicture}[Centering,scale=.6]
                \tikzset{every loop/.style={}}
                \node[NodeGraph](a)at(0,0){$\ast$};
                \draw[EdgeGraph](a)edge[loop](a);
            \end{tikzpicture}
            \comp \enspace
            \Points{a}{b}
             - 
            \begin{tikzpicture}[Centering,scale=.6]
                \tikzset{every loop/.style={}}
                \node[NodeGraph](a)at(0,0){$a$};
                \draw[EdgeGraph](a)edge[loop](a);
            \end{tikzpicture}
             - 
            \begin{tikzpicture}[Centering,scale=.6]
                \tikzset{every loop/.style={}}
                \node[NodeGraph](a)at(0,0){$b$};
                \draw[EdgeGraph](a)edge[loop](a);
            \end{tikzpicture}
            \enspace = \enspace
            2 \enspace \Segment{a}{b}.
        \end{equation}
        \item Using computer algebra, one generates all vectors in $\LP[\set{a,b,c}]$ with three edges
        and shows that the announced multigraph is not a linear combination of these.
    \end{itemize}
\end{proof}


\section{Graph insertion operads}\label{sec_graph_operads}
The goal of this section is to give a general construction of operads on multigraphs where the partial composition of two elements $g_1\comp g_2$ is given by
\begin{enumerate}
    \item taking the disjoint union of $g_1$ and $g_2$;
    \item removing the vertex $\ast$ from $g_1$;
    \item connecting \underline{independently} each loose ends of $g_1$ to $g_2$ in a certain way.
\end{enumerate}
\noindent What we mean by independently is that the way of connecting one end does not depend on how we connect the other ends. Note that the ``certain way'' 
in which an end can be connected may include duplication of edges.


\subsection{Constructions on species and operads}\label{sec_constructions}

We begin by defining new constructions on species and operads. We define three constructions:
the augmentation, the semi-direct product and the maps from a set to an operad.

\begin{definition}
    Let $A$ be a set and $\spe{S}$ be a species. An {\em $A$-augmentation} of $\spe{S}$ is a species
     $\augm{A}{\spe{S}}$ such that $\augm{A}{\spe{S}}[V] \cong \spe{S}[A\times V]$ for every finite set $V$.
\end{definition}

\begin{example}\label{ex_deform}
    Let $A$ be a set.
    \begin{itemize}
        \item Instead of considering an $A$-augmented multigraph on $V$ as a multigraph on $V\times A$, we consider them 
        as multigraphs on $V$ where the ends of the edges are labelled with elements of $A$. In particular, the species of oriented multigraphs
        $\MG_{or}$ is in bijection with the species of $\set{\mathtt{s,t}}$-augmented multigraphs $\augm{\set{\mathtt{s,t}}}{\MG}$. For $g\in\MG$ and $\ori$ an
        orientation of $g$, the pair $(g,\ori)$ is sent on the multigraph obtained by respectively labeling by $t$ and $s$ the targets and sources
        of the edges.
        \item Instead of seeing the elements in $\augm{A}{\Poly}[V]$ as polynomials with set of variables the couples $(v,a)\in V\times A$, we consider them as 
        polynomials with set of variables $\set{v_a\,|\,v\in V,a\in A}$ of elements of $V$ indexed by elements of $A$.
    \end{itemize}
\end{example}

\begin{remark} \label{rem_bij_graphpol}
    For $\spe{S}$ and $\spe{R}$ any two species and $f:\spe{R}\to\spe{S}$ a morphism, $f$ extends to a morphism between any two $A$-augmentation of $\spe{R}$
    and $\spe{S}$ by $\augm{A}{\spe{R}}[V]\cong\spe{R}[A\times V]\stackrel{f}{\to}\spe{S}[A\times V]\cong\augm{A}{\spe{S}}$. In particular the morphism $\MG\to\Poly$ 
    given in subsubsection~\ref{graphpol} extends to a morphism which sends an edge $(u_a,v_b)$ on the monomial $u_av_b$.
\end{remark}

In the following proposition, we give an operad structure to a Hadamard product $\spe{S}\times\Operad$ where $\spe{S}$ is a species and $\Operad$ is
an operad. 
\begin{proposition}\label{prop_semiprod}
    Let $\spe{S}$ be a linear species and $\Operad$ an operad. Let $\varphi$ be a morphism from $\spe{S}'\cdot(\spe{S}\times\Operad)$ to $\spe{S}$ and denote by
    $x\comp^f y=\varphi(x\otimes y\otimes f)$. Suppose $\varphi$ satisfies the following hypotheses. 
    \begin{description}
        \item[Commutativity.] For $x$ an element $\spe{S}''$ and $y\otimes f$ and $z\otimes g$ two elements of $\spe{S}\times\Operad$,
        \begin{equation}\label{prod_sd_eq1}
            (x\comp[\ast_1]^f y)\comp[\ast_2]^g z  = (x\comp[\ast_2]^g z)\comp[\ast_1]^f y.
        \end{equation}
        \item[Associativity.] For $x$ an element of $\spe{S}'$, $y\otimes f$ an element of $(\spe{S}\times\Operad)'$ and $z\otimes h$ an element of $\spe{S}\times\Operad$,
        \begin{equation}\label{prod_sd_eq2} 
            (x\comp[\ast_1]^f y)\comp[\ast_2]^g z = x\comp[\ast_1]^{f\comp[\ast_2] g}(y\comp[\ast_2]^g z).
        \end{equation}
        \item[Unity.] There exists a map $e:\spe{X}\rightarrow \spe{S}$ such that 
        \begin{equation}\label{prod_sd_eq3}
            x\comp^{e_{\Operad}(v)}e(v) = \spe{S}[\sigma](x) \quad \text{ and }\quad e(\ast)\comp^f x = x,
        \end{equation} 
        where $e_{\Operad}$ is the unit of $\Operad$ and $\sigma$ is the bijection which sends $\ast$ on $v$ and is the identity on the rest of the set on
        which $x$ is defined.
    \end{description}
    Then the partial composition $\comp^{\varphi}$ defined by
    \begin{equation}\begin{split}
        \comp^{\varphi}: (\spe{S}\times\Operad)'\cdot \spe{S}\times\Operad &\to \spe{S}\times\Operad \\
        (x\otimes f)\otimes (y\otimes g) &\mapsto x\comp^g y\otimes f\comp g
    \end{split}\end{equation}
    makes $\spe{S}\times\Operad$ an operad with unit $e$. We call this operad the {\em semi-direct product of $\spe{S}$ and $\Operad$ over $\varphi$} and we 
    denote it by $\spe{S}\ltimes_{\varphi}\Operad$.
\end{proposition}

\begin{proof}
    We must verify that the three diagrams~\eqref{def_op} commutes.
    \begin{itemize} 
        \item Let $V_1,V_2,V_3$ be three disjoint sets and $x\otimes f\in(\spe{S}\times\Operad)''[V_1]$, $y\otimes g\in\spe{S}\times\Operad[V_2]$ and 
        $z\otimes h\in\spe{S}\times\Operad[V_3]$. We then have
        \begin{equation}\begin{split}
            \big((x\otimes f) \comp[\ast_1]^{\varphi} (y\otimes g)\big)\comp[\ast_2]^{\varphi} (z\otimes h) 
            &= \big( (x \comp[\ast_1]^g y) \comp[\ast_2]^h z \big)  \otimes  \big((f\comp[\ast_1]g)\comp[\ast_2]h \big) \\
            &= \big( (x \comp[\ast_2]^h z) \comp[\ast_1]^g y \big)  \otimes  \big((f\comp[\ast_2]h)\comp[\ast_1]g \big) \\
            &= \big( (x\otimes f) \comp[\ast_1]^{\varphi} (z\otimes h) \big) \comp[\ast_2]^{\varphi} (y\otimes g),
        \end{split}\end{equation}
        where the second equality follows from~\eqref{prod_sd_eq1} and the fact that $\Operad$ is an operad.
        \item Let $V_1,V_2,V_3$ be three disjoint sets and $x\otimes f\in(\spe{S}\times\Operad)'[V_1]$, $y\otimes g\in(\spe{S}\times\Operad)'[V_2]$ and 
        $z\otimes h\in\spe{S}\times\Operad[V_3]$. We then have
        \begin{equation}\begin{split}
            \big((x\otimes f) \comp[\ast_1]^{\varphi} (y\otimes g)\big)\comp[\ast_2]^{\varphi} (z\otimes h) 
            &= \big( (x\comp[\ast_1]^g y)\comp[\ast_2]^h z \big)  \otimes  \big( (f\comp[\ast_1]g)\comp[\ast_2]h \big) \\
            &= \big( x\comp[\ast_1]^{g\comp[\ast_1]h}( y\comp[\ast_2]^h z) \big)  \otimes  f\comp[\ast_1](g\comp[\ast_2]h) \big) \\
            &= (x\otimes f)\comp[\ast_1]^{\varphi}\big( (y\otimes g)\comp[\ast_2]^{\varphi}(z\otimes h)\big),
        \end{split}\end{equation} 
        where the second equality follows from~\eqref{prod_sd_eq2} and the fact that $\Operad$ is an operad.
        \item Let be $x\otimes f\in(\spe{S}\times\Operad)'[V]$ and $v\not\in V$. We then have 
        \begin{equation}
            (x\otimes f)\comp^{\varphi} (e(v)\otimes e_{\Operad}(v))= x\comp^{e_{\Operad}(v)}e(v) \otimes f\comp e(v) = \spe{S}[\sigma](x\otimes f)
        \end{equation}
        where $\sigma$ is the bijection which sends $\ast$ to $v$ and is the identity on $V$. The last equality follows from
        the first equality from \eqref{prod_sd_eq3} and the fact that $\Operad$ is an operad. Let now be $x\otimes f\in\spe{S}\times\Operad[V]$.
        Then
        \begin{equation}
            (e(\ast)\otimes e_{\Operad}(\ast))\comp (x\otimes f) = e(\ast)\comp^f x \otimes e_{\Operad}(\ast)\comp f = x\otimes f
        \end{equation}
        where the last equality comes from second equality from \eqref{prod_sd_eq3} and the fact that $\Operad$ is an operad.
    \end{itemize}
\end{proof}

When it is clear from the context, we do not mention $\varphi$ and just write semi-direct product of $\spe{S}$ and $\Operad$ and denote it by $\spe{S}\ltimes \Operad$.
In practice, the operad structure $\Operad$ of $\spe{S}\ltimes\Operad$ is transparent and we are just interested in what happens on $\spe{S}$, that it is to say
the ``pseudo partial composition'' $x\comp^g y$.

\begin{example}
    For $C$ a finite set, let $\spe{C}$ be the trivial species given by $\spe{C}[V]=\K C$ for all set $V$ and $\mathcal{C}=\spe{X}+\spe{C}_{2+}$. This second
    species has an operad structure given defined by, for $c_1\in \spe{C}'[V_1]$ and $c_2\in \spe{C}[V_2]$: $c_1\comp c_2 = c_1$ 
    if $V_1\noemp$ and $\ast\comp c_2 = c_2$ when $V_1=\emptyset$ and $\ast\in\spe{X}[\set{\ast}]$. 

    Let $\mathcal{F}^C = \spe{X} + \mathcal{F}_{2+}^C$ be the species of maps with co-domain $C$: $\mathcal{F}^C[V]=\K\set{f:V\to C}$ for $\card{V} > 1$. 
    We define a semi-direct product structure $\mathcal{F}^C\ltimes_{\varphi} \mathcal{C}$. Suppose that 
    $\card{V_1+\set{\ast}},\card{V_2}>1$ and let be $f\in \mathcal{F}^C[V_1+\set{\ast}]$ and $g\otimes x\in \mathcal{F}^C\times \mathcal{C}[V_2]$. We then have 
    $f\comp^c g = 0$ if $f(\ast) \not = c$ and $f\comp^c g(v) = \left\{\begin{array}{rl}
    f(v)  & \text{if $v\in V_1$}    \\ 
    g(v)  & \text{if $v\in V_2$}\end{array}\right.$ else. When $V_1=\emptyset$ or $V_2$ is a singleton, the action of $\varphi$ is implied by the unit hypothesis. 
   
    We call this operad the {\em $C$-coloration} operad. When this operad is considered alone, one can see an element of $(f,c)\in\mathcal{F}^C\ltimes C[V]$ as a 
    corolla on $V$ with its root colored by $c$ and its leaves $v\in V$ colored by $f(v)$. The partial composition consists then in grafting two corollas if the 
    root and the leaf on which it must be grafted share the same colors. For instance we have, by representing the elements of $C$ with colors:
    \begin{equation}
        \begin{tikzpicture}[Centering, scale=0.6]
            \node[NodeGraph, draw=Red, fill=Red!20](n)at(0,0){$ $};
            \node[NodeGraph, draw=Red, fill=Red!20](a)at(-1,1){$a$};
            \node[NodeGraph, draw=Green, fill=Green!20](b)at(-0.125,1.5){$b$};
            \node[NodeGraph, draw=Blue, fill=Blue!20](s)at(0.875,1.25){$\ast$};
            \node[NodeGraph, draw=Magenta, fill=Magenta!20](c)at(1.5,0.75){$c$};
            \draw[EdgeGraph, black](n)--(a);
            \draw[EdgeGraph, black](n)--(b);
            \draw[EdgeGraph, black](n)--(s);
            \draw[EdgeGraph, black](n)--(c);
        \end{tikzpicture}
        \enspace \comp \enspace
        \begin{tikzpicture}[Centering, scale=0.6]
            \node[NodeGraph, draw=Red, fill=Red!20](n)at(0,0){$ $};
            \node[NodeGraph, draw=Blue, fill=Blue!20](a)at(-0.5,1){$1$};
            \node[NodeGraph, draw=Magenta, fill=Magenta!20](b)at(0.5,1.25){$2$};
            \draw[EdgeGraph, black](n)--(a);
            \draw[EdgeGraph, black](n)--(b);
        \end{tikzpicture}
        \enspace = \enspace 0 \text{ and}
    \end{equation}
    \begin{equation}
        \begin{tikzpicture}[Centering, scale=0.6]
            \node[NodeGraph, draw=Red, fill=Red!20](n)at(0,0){$ $};
            \node[NodeGraph, draw=Red, fill=Red!20](a)at(-1,1){$a$};
            \node[NodeGraph, draw=Green, fill=Green!20](b)at(-0.125,1.5){$b$};
            \node[NodeGraph, draw=Blue, fill=Blue!20](s)at(0.875,1.25){$\ast$};
            \node[NodeGraph, draw=Magenta, fill=Magenta!20](c)at(1.5,0.75){$c$};
            \draw[EdgeGraph, black](n)--(a);
            \draw[EdgeGraph, black](n)--(b);
            \draw[EdgeGraph, black](n)--(s);
            \draw[EdgeGraph, black](n)--(c);
        \end{tikzpicture}
        \enspace \comp \enspace
        \begin{tikzpicture}[Centering, scale=0.6]
            \node[NodeGraph, draw=Blue, fill=Blue!20](n)at(0,0){$ $};
            \node[NodeGraph, draw=Blue, fill=Blue!20](a)at(-0.5,1){$1$};
            \node[NodeGraph, draw=Magenta, fill=Magenta!20](b)at(0.5,1.25){$2$};
            \draw[EdgeGraph, black](n)--(a);
            \draw[EdgeGraph, black](n)--(b);
        \end{tikzpicture}
        \enspace = \enspace \left(
        \begin{tikzpicture}[Centering, scale=0.6]
            \node[NodeGraph, draw=Red, fill=Red!20](n)at(0,0){$ $};
            \node[NodeGraph, draw=Red, fill=Red!20](a)at(-1,1){$a$};
            \node[NodeGraph, draw=Green, fill=Green!20](b)at(-0.125,1.5){$b$};
            \node[NodeGraph, draw=Blue, fill=Blue!20](s)at(0.875,1.25){$\ast$};
            \node[NodeGraph, draw=Magenta, fill=Magenta!20](c)at(1.5,0.75){$c$};
            \node[NodeGraph, draw=Blue, fill=Blue!20](1)at(0.375,2.25){$1$};
            \node[NodeGraph, draw=Magenta, fill=Magenta!20](2)at(1.375,2.5){$2$};
            \draw[EdgeGraph, black](n)--(a);
            \draw[EdgeGraph, black](n)--(b);
            \draw[EdgeGraph, black](n)--(s);
            \draw[EdgeGraph, black](n)--(c);
            \draw[EdgeGraph, black](s)--(1);
            \draw[EdgeGraph, black](s)--(2);
        \end{tikzpicture} \right)
        \enspace = \enspace
            \begin{tikzpicture}[Centering, scale=0.6]
            \node[NodeGraph, draw=Red, fill=Red!20](n)at(0,0){$ $};
            \node[NodeGraph, draw=Red, fill=Red!20](a)at(-1,1){$a$};
            \node[NodeGraph, draw=Green, fill=Green!20](b)at(-0.25,1.5){$b$};
            \node[NodeGraph, draw=Blue, fill=Blue!20](1)at(0.5,1.25){$1$};
            \node[NodeGraph, draw=Magenta, fill=Magenta!20](2)at(1.25,0.875){$2$};
            \node[NodeGraph, draw=Magenta, fill=Magenta!20](c)at(1.5,0.25){$c$};
            \draw[EdgeGraph, black](n)--(a);
            \draw[EdgeGraph, black](n)--(b);
            \draw[EdgeGraph, black](n)--(c);
            \draw[EdgeGraph, black](n)--(1);
            \draw[EdgeGraph, black](n)--(2);
        \end{tikzpicture}
    \end{equation}
    A way to define {\em colored operads}~(see \cite{DY16} for more details on the theory of colored operads) is then to define them as any Hadamard product of a 
    $C$-coloration operad with another operad.
\end{example}

Let us now define our last construction.

\begin{definition}
    Let $A$ be a set and $\spe{S}$ be a species. The set species of {\em functions from $A$ to $\spe{S}$} is defined by $\func{A}{\spe{S}}[V] = \K\set{f:A\to \spe{S}[V]}$.
\end{definition}

The following proposition then tells us that if $\Operad$ has an operad structure, it naturally reflects on $\func{A}{\Operad}$.

\begin{proposition}\label{prop_func}
    If $\Operad$ is an operad with unit $e$, $\func{A}{\Operad}$ has an operad structure with the elements $e_v:A \to \set{e(v)}\in\func{A}{\Operad}[\set{v}]$ 
    as units and partial composition defined by $f_1\comp f_2(a) = f_1(a)\comp f_2(a)$.
\end{proposition}

\begin{proof}
    We must verify that the diagrams~\eqref{def_op} are indeed commutative.
    \begin{itemize}
        \item Let be $f_1\in(\func{A}{\Operad})''[V_1]$, $f_2\in\func{A}{\Operad}[V_2]$ and $f_3\in\func{A}{\Operad}[V_3]$. Then for all $a\in A$, we have
        \begin{equation}\begin{split}
            \big((f_1\comp[\ast_1] f_2)\comp[\ast_2] f_3\big)(a) &= (f_1\comp[\ast_1] f_2(a))\comp[\ast_2] f_3(a) \\
            &= \big(f_1(a)\comp[\ast_1] f_2(a)\big)\comp f_3(a) \\
            &= f_1(a)\comp[\ast_2] f_3(a)\comp f_2(a) \\
            &= (f_1\comp[\ast_2] f_3(a))\comp[\ast_1] f_2(a) \\
            &= \big((f_1\comp[\ast_2] f_3)\comp[\ast_1] f_2\big)(a),
        \end{split}\end{equation}
        where the third equality follows from the fact that $\Operad$ is an operad. Hence, we have $(f_1\comp[\ast_1]f_2)\comp[\ast_2] f_3 = (f_1\comp[\ast_2]f_3)\comp[\ast_1]f_2$.
        \item Let be $f_1\in(\func{A}{\Operad})'[V_1]$, $f_2\in(\func{A}{\Operad})'[V_2]$ and $f_3\in\func{A}{\Operad}[V_3]$. Then for all $a\in A$:
        \begin{equation}\begin{split}
            \big((f_1\comp[\ast_1] f_2)\comp[\ast_2] f_3\big)(a) &= (f_1\comp[\ast_1] f_2(a))\comp[\ast_2] f_3(a) \\
            &= f_1(a)\comp[\ast_1] f_2(a)\comp f_3(a) \\
            &= f_1(a)\comp[\ast_2] f_3(a)\comp f_2(a) \\
            &= (f_1\comp[\ast_2] f_3(a))\comp[\ast_1] f_2(a) \\
            &= \big((f_1\comp[\ast_2] f_3)\comp[\ast_1] f_2\big)(a),
        \end{split}\end{equation}
        where the third equality comes from the fact that $\Operad$ is an operad. Hence, we have $(f_1\comp[\ast_1]f_2)\comp[\ast_2] f_3 = (f_1\comp[\ast_2] f_3)\comp[\ast_1] f_2$.
        \item Let be $f\in(\func{A}{\Operad})'[V]$ and $v\not\in V$. Then for all $a\in A$, we have the equalities $f\comp e_v(a)= f(a)\comp e(v) = \Operad[\sigma](f(a))$ 
        and so $f\comp e_v=\Operad[\sigma](f)$, where $\sigma$ is the bijection which sends $\ast$ to $v$ and is the identity over $V$. If now $f\in\func{A}{\Operad}$, we have
        for all $a\in A$: $e_{\ast}\comp f(a)= e(\ast)\comp f(a) = f(a)$ and so $e_{\ast}\comp f = f$.
    \end{itemize}
\end{proof}

Note that if $A$ is a singleton then $\func{A}{\Operad} \cong \Operad$. Let $A,B,C,D$ four sets such that $A$ and $B$ are disjoint and $f:A\to C$ 
and $g:B\to D$ two maps. We denote by $f\uplus g$ the map from $A\sqcup B$ to $C\cup D$ defined by $f\uplus g(a) = f(a)$ for $a\in A$ and 
$f\uplus g(b) = g(b)$ for $b\in B$. 

\begin{proposition}
    Let $A$ and $B$ be two disjoint sets and $\Operad_1$ and $\Operad_2$ be two operads. Then the species $\func{A,B}{\Operad_1,\Operad_2}$ defined by 
    $\func{A,B}{\Operad_1,\Operad_2}[V] = \set{f\uplus g\,|\, f\in\func{A}{\Operad_1}, g\in\func{B}{\Operad_2}}$ is an operad with same partial composition
    than in Proposition~\ref{prop_func}.
\end{proposition}

\begin{proof}
    We remark that since $A$ and $B$ are disjoint, $f_1\uplus f_2\comp g_1\uplus g_2 = (f_1\comp g_1)\uplus (f_2\comp g_2)$. To conclude we apply what was
    already shown in the proof of Proposition~\ref{prop_func}.
\end{proof}


\subsection{Application to multigraphs}\label{subsec_graph_op}

We now use the construction of the previous subsection to define operad structures on multigraphs. 

Recall from Remark~\ref{rem_bij_graphpol} that there is a monomorphism from $\augm{A}{\MG}$ to $\augm{A}{\Poly}$. We now make use of this monomorphism
and consider the elements of $\augm{A}{\MG}$ as both multigraphs and polynomials. Let $p\in\Poly[V]$ be a sum of polynomials.
Then for $A$ a set and $a\in A$, we denote by $p_a\in\augm{A}{\Poly}$ the sum of polynomials obtained by indexing all the variables in $p$ by $a$. 
Let now $p$ be any polynomial and $x_1,\dots, x_n$ a subset of its variables. Then for $q_1,\dots, q_n$ $n$ polynomials, we expand the notation 
introduced in equation \eqref{pol_compositioin} and denote by $p|_{\set{x_i\take q_i}}$ the polynomial 
$\big(\dots((p_1\comp[x_1] q_1)\comp[x_2] q_2)\dots\big)\comp[x_n] q_n$. 
This notation generalize to sum of polynomials by recalling that the multiplication and addition of polynomials act as bilinear maps.

\begin{example}
    For $A$ the singleton $\set{a}$ and $p$ the polynomial $xy\oplus x^2 + zy\in\Poly[\set{x,y,z}]$ we have $p_a= x_ay_a\oplus x_a^2 + z_ay_a$. 
    Let now be $p=x\oplus yz$, $q_1 = u+v$ and
    $q_2=x_1\oplus x_2$. Then
    \begin{equation}\begin{split}
        p|_{x\take q_1, y\take q_2} &= q_1\oplus q_2z = (u+v)\oplus (x_1\oplus x_2)z \\
        &= u\oplus x_1z\oplus x_2z + v\oplus x_1z\oplus x_2z.
    \end{split}\end{equation}
\end{example}

\begin{theorem}\label{th_op_ins}
    Let $A$ be a set and $\varphi$ be the morphism from $\augm{A}{\MG}\cdot(\augm{A}{\MG}\times\func{A}{\K\Poly^1})$ to $\augm{A}{\MG}$ given by
    \begin{equation}
        \varphi(g_1\otimes g_2 \otimes f) = g_1\comp^f g_2 = g_1|_{\set{\ast_a\take f(a)_a}}\oplus g_2.
    \end{equation}
    Then $\varphi$ satisfies the hypotheses of Proposition~\ref{prop_semiprod} and we can consider the semidirect product of $\augm{A}{\MG}$ and 
    $\mathcal{F}_A^{\K \Poly^1}$ over $\varphi$.
\end{theorem}

\begin{proof}
    We need to check that $\varphi$ satisfies the three hypotheses of Proposition~\ref{prop_semiprod}. The first two are simply computations over polynomials.
    \begin{description}
        \item[Commutativity.] Let $g_1$ be an element of $\augm{A}{\MG}''$ and $g_2\otimes f$ and $g_3\otimes h$ two elements of $\augm{A}{\MG}\times\func{A}{\K\Poly^1}$.
        Then
        \begin{equation}\begin{split}
            (g_1\comp[\ast_1]^f g_2)\comp[\ast_2]^h g_3 
            &= (g_1|_{\set{\ast_{1a}\take f(a)_a}}\oplus g_2)|_{\{\ast_{2a}\take h(a)_a\}}\oplus g_3 \\
            &= g_1|_{\set{\ast_{1a}\take f(a)_a}}|_{\set{\ast_{2a}\take h(a)_a}}\oplus g_2\oplus g_3 \\
            &= g_1|_{\set{\ast_{2a}\take h(a)_a}}|_{\set{\ast_{1a}\take f(a)_a}}\oplus g_3\oplus g_2 \\
            &= (g_1|_{\set{\ast_{2a}\take h(a)_a}}\oplus g_3)|_{\set{\ast_{1a}\take f(a)_a}}\oplus g_2 \\
            &= (g_1\comp[\ast_2]^h g_3)\comp[\ast_1]^f g_2 .
        \end{split}\end{equation}
        \item[Associativity.] Let be $g_1$ an element of $\augm{A}{\MG}'$, $g_2\otimes f$ an element of $(\augm{A}{\MG}\times\func{A}{\K\Poly^1})'$ and $g_3\otimes h$ 
        an element of $\augm{A}{\MG}\times\func{A}{\K\Poly^1}$. Then
        \begin{equation}\begin{split} 
            (g_1\comp[\ast_1]^f g_2)\comp[\ast_2]^h g_3
            &= (g_1|_{\set{\ast_{1a}\take f(a)_a}}\oplus g_2)|_{\set{\ast_{2a}\take h(a)_a}}\oplus g_3 \\
            &= g_1|_{\set{\ast_{1a}\take f(a)_a}}|_{\set{\ast_{2a}\take h(a)_a}}\oplus g_2|_{\set{\ast_{2a}\take h(a)_a}}\oplus g_3 \\
            &= g_1|_{\set{\ast_{1a}\take f(a)_a|_{\set{\ast_{2a}\take h(a)_a}}}}\oplus g_2|_{\set{\ast_{2a}\take h(a)_a}}\oplus g_3 \\
            &= g_1|_{\set{\ast_{1a}\take f\comp[\ast_2] h(a)_a}}\oplus g_2|_{\set{\ast_{2a}\take h(a)_a}}\oplus g_3 \\
            &= g_1\comp[\ast_1]^{f\comp[\ast_1] g}(g_2\comp[\ast_2]^h g_3).
        \end{split}\end{equation}
        \item[Unity.] Let $e: \spe{X}\to \augm{A}{\MG}$ be defined by $e(v) = \emptyset_{\set{v}}$ and let $e_{\mathcal{F}}$ the unit of $\func{A}{\K\Poly^1}$.
        Let be $g\in\augm{A}{\MG}'[V]$ and $v\not\in V$. Then
        \begin{equation}\begin{split}
            g\comp^{e_{\mathcal{F}}(v)}e(v) 
            &= g|_{\set{\ast_a \take e_{\mathcal{F}}(v)(a)_a}}\oplus\emptyset_{\set{v}} \\
            &= g|_{\set{\ast_a \take v_a}} = \augm{A}{\MG}[\sigma](g),
        \end{split}\end{equation}
        where $\sigma$ is the bijection which sends $\ast$ to $v$ and is the identity over $V$. If now $g\in\augm{A}{\MG}[V]$, then for any $f$,
        \begin{equation}
            e(\ast)\comp^f g = \emptyset_{\set{\ast}}|_{\set{\ast_a \take f(a)_a}}\oplus g = g.
        \end{equation}
    \end{description}
\end{proof}

In all the following we only consider this semi-direct, albeit not exactly on $\augm{A}{\MG}$ and $\func{A}{\K\Poly^1}$, and we will hence omit the $\varphi$ index.
We call {\em graph insertion operad} any operad which can be written with this semi-direct product.

\begin{remark}
    This notion of graph insertion operad is different than the one mentioned in \cite{Kreimer:2000ja}, in the context of Feynman graph insertions in quantum field 
    theory.
\end{remark}

As mentioned above, this construction is supposed to encode partial compositions of the form: take the disjoint union, forget the vertex on which we compose,
and independently reconnect loose edges. Let $g_1\otimes f_1$ and $g_2\otimes f_2$ be two elements of $\augm{A}{\MG}\ltimes\func{A}{\K\Poly^1}$. The ends of $g_1$ are 
labelled by elements of $A$. When considering $g_1\comp^{f_2} g_2$, the map $f_2$ is there to tell us how we should connect the loose ends obtained
by forgetting $\ast$: each end labelled with $a$ will be connected to the elements of $f_2(a)$. More pratically, when defining a partial composition
of the above form, one will have certain types of ends 1,2,3\dots and for each type of edge a way to reconnect them. For example 'edges of type $i$ connect 
to vertices $a$ and $b$ or to vertex $c$'. This translates by $f(i) = a\oplus b + c$. For a partial composition defined this way, we then just need
to verify that the sums of polynomials $f(i)$ are a stable sub-species of $\K\Poly^1$ by composition of polynomials i.e. a sub-operads of $\K\Poly^1$.

Let us give two simple examples of operads which can be defined using this semi-direct product. Recall from Section~\ref{sec_intro} that we have a natural 
embedding of $\spe{\id}$ in $\K\Poly$, and three natural embeddings of $\Sym_+$ in $\K\Poly$.

\begin{example}
    $\point{\G}$ has a natural operad structure given by $\point{\G} \cong \G\times \spe{\id} \cong \augm{\set{0}}{\G}\ltimes\func{\set{0}}{\spe{\id}}$.
    For $(g_1,v_1)$ and $(g_2,v_2)$ two pointed graphs, the partial composition $(g_1,v_1)\comp(g_2,v_2)$ is then equal to $(g_3,v_1|_{\ast\take v_2})$ 
    where $g_3$ is the graph obtained by connecting all the ends on $\ast$ to $v_2$. More formally,
    \begin{equation}\begin{split}
        (g_1,v_1)\comp (g_2,v_2) &= (g_1|_{\ast\take v_2}\oplus g_2, v_1|_{\ast\take v_2}) \\
        &= (\G[\sigma](g_1)\oplus g_2,v_1|_{\ast\take v_2}),
    \end{split}\end{equation}
    where $\sigma$ is the bijection which sends $\ast$ on $v_2$ and which is the identity on the rest of its domain. For instance, we have:
    \begin{equation}
        \begin{tikzpicture}[Centering,scale=.7]
            \node[NodeGraph](s)at(0,0){$\ast$};
            \node[RootGraph](a)at(1,1){$a$};
            \node[NodeGraph](b)at(1,-1){$b$};
            \draw[EdgeGraph](a)--(s);
            \draw[EdgeGraph](s)--(b);
        \end{tikzpicture}
        \enspace \comp \enspace
        \begin{tikzpicture}[Centering,scale=1]
            \node[RootGraph](c)at(0,0){$c$};
            \node[NodeGraph](d)at(1,0){$d$};
            \draw[EdgeGraph](c)--(d);
        \end{tikzpicture}
        \enspace = \enspace
        \begin{tikzpicture}[Centering,scale=.7]
            \node[RootGraph](a)at(1,1){$a$};
            \node[NodeGraph](b)at(1,-1){$b$};
            \node[NodeGraph](c)at(0,0){$c$};
            \node[NodeGraph](d)at(2,0){$d$};
            \draw[EdgeGraph](a)--(c);
            \draw[EdgeGraph](c)--(b);
            \draw[EdgeGraph](c)--(d);
        \end{tikzpicture}.
    \end{equation}
    Remark that the operad $\NAP$~\cite{Liv06} is a sub-operad of the operad above and hence is a graph insertion operad.
\end{example}

\begin{example}
    $\G$ has a natural operad structure given by $\G\cong \G\times \Sym_+\cong \augm{\set{0}}{\G}\ltimes\func{\set{0}}{\Sym}$,
    where we consider here the embedding $V\mapsto \bigoplus_{v\in V} v$.
    For $g_1$ and $g_2$ two graphs, the partial composition $g_1\circ g_2$ is then the graph obtained by adding an edge between each neighbour of 
    $\ast$ and each vertex of $g_2$. More formally, for $g_1\in G'[V_1]$ and $g_2\in G[V_2]$:
    \begin{equation}\begin{split}
        g_1\comp g_2 &= g_1|_{\ast \take \bigoplus V_2}\oplus g_2\\
        &= g_1|_{V_1}\oplus\bigoplus_{v\in n(\ast)} v\bigoplus_{v\in V_2}\oplus g_2 \\
        &= g_1|_{V_1}\oplus\bigoplus_{v_1\in n(\ast), v_2\in V_2} v_1v_2\oplus g_2,
    \end{split}\end{equation}
    where $n(\ast)$ is the set of neighbours of $\ast$. Each of terms of the last line have a combinatorial interpretation: $g_1|_{V_1}$ is $g_1$ to 
    which we removed $\ast$, the term $\bigoplus_{v_1\in n(\ast),v_2\in V_2}v_1v_2$ means that we add an edge between any element in $n(\ast)$ and any element in $V_2$, 
    and finally the term $g_2$ means that we keep all the edges of $g_2$. For instance, we have:
    \begin{equation}
        \begin{tikzpicture}[Centering,scale=.7]
            \node[NodeGraph](s)at(0,0){$\ast$};
            \node[NodeGraph](a)at(1,1){$a$};
            \node[NodeGraph](b)at(1,-1){$b$};
            \draw[EdgeGraph](a)--(s);
            \draw[EdgeGraph](s)--(b);
        \end{tikzpicture}
        \enspace \comp \enspace
        \begin{tikzpicture}[Centering,scale=1]
            \node[NodeGraph](c)at(0,0){$c$};
            \node[NodeGraph](d)at(1,0){$d$};
            \draw[EdgeGraph](c)--(d);
        \end{tikzpicture}
        \enspace = \enspace
        \begin{tikzpicture}[Centering,scale=.7]
            \node[NodeGraph](a)at(1,1){$a$};
            \node[NodeGraph](b)at(1,-1){$b$};
            \node[NodeGraph](c)at(0,0){$c$};
            \node[NodeGraph](d)at(2,0){$d$};
            \draw[EdgeGraph](a)--(c);
            \draw[EdgeGraph](c)--(b);
            \draw[EdgeGraph](c)--(d);
            \draw[EdgeGraph](a)--(d);
            \draw[EdgeGraph](b)--(d);
        \end{tikzpicture}.
    \end{equation}
\end{example}

Let us now prove that the two partial composition introduced in subsection~\ref{subsec_cano_op} do indeed make $\point{\MG}_{or}$ and $\MG$ operads.

\begin{proof}[proof of Theorem~\ref{th_op_graph_or}]
    Recall from Example~\ref{ex_deform} that we have a bijection $\MG_{or}\cong \augm{\set{\mathtt{s,t}}}{\MG}$. The isomorphism 
    $\point{\MG_{or}}\cong \augm{\set{\mathtt{s,t}}}{\MG}\times\spe{\id}\times\Sym_+ \cong \augm{\set{\mathtt{s,t}}}{\MG}\ltimes\func{\set{s},\set{t}}{\spe{\id},\Sym_+}$, 
    give the desired operad structure on $\point{\MG_{or}}$ when considering the embedding $V\mapsto \sum_{v\in V}v$ of $\Sym_+$ in $\K\Poly^1$.
\end{proof}

\begin{proof}[proof of Theorem~\ref{th_graphop_cano}]
    This is the operad structure on $\MG$ given by $\MG\cong \MG\times\Sym_+\cong \augm{\set{0}}{\MG}\ltimes\func{\set{0}}{\Sym_+}$
    when considering the embedding $V\mapsto \sum_{v\in V}v$ of $\Sym_+$ in $\K\Poly^1$.
\end{proof}

We end this section by mentioning that while we restricted ourselves to multigraphs in this paper, all the work done in this section
naturally generalize to the very general framework of multi-hypergraphs, whose edges can contain any non null number of vertices and can be appear more than once.
This is done by considering replacing $\MG$ by $\MHG$, the species of multi-hypergraphs, and $\K\Poly^1$ by $\K\Poly$ in Theorem~\ref{th_op_ins}.
In this more general context, the connection of ends can also change the number of vertices of the edges.

\bibliographystyle{plain}
\bibliography{redac}

\begin{thebibliography}{10}

\bibitem{AGKT20}
Jean-Christophe Aval, Samuele Giraudo, Théo Karaboghossian, and Adrian Tanasa.
\newblock Graph insertion operads.
\newblock {\em Séminaire Lotharingien de Combinatoire}, 84B, 2020.

\bibitem{BLL98}
François Bergeron, Gilbert Labelle, and Pierre Leroux.
\newblock {\em {Combinatorial species and tree-like structures}}, volume~67 of
  {\em Encyclopedia of Mathematics and its Applications}.
\newblock Cambridge University Press, Cambridge, 1998.

\bibitem{BL11}
Nantel Bergeron and Jean-Louis Loday.
\newblock {The symmetric operation in a free pre-Lie algebra is magmatic}.
\newblock {\em Proc. Amer. Math. Soc.}, 139(5):1585--1597, 2011.

\bibitem{CSLC}
Fr{\'e}d{\'e}ric Chapoton.
\newblock Operads and algebraic combinatorics of trees.
\newblock {\em Séminaire {L}otharingien de {C}ombinatoire}, 58, 2008.

\bibitem{CL01}
Fr{\'e}d{\'e}ric Chapoton and Muriel Livernet.
\newblock {Pre-Lie algebras and the rooted trees operad}.
\newblock {\em Int. Math. Res. Notices}, 8:395--408, 2001.

\bibitem{DK10}
Vladimir Dotsenko and Anton Khoroshkin.
\newblock Gröbner bases for operads.
\newblock {\em Duke Mathematical Journal}, 153(2):363–396, Jun 2010.

\bibitem{Gir18}
Samuele Giraudo.
\newblock {\em {Nonsymmetric Operads in Combinatorics}}.
\newblock Springer Nature Switzerland AG, 2018.

\bibitem{Hof10}
Eric Hoffbeck.
\newblock A {P}oincar{\'e}--{B}irkhoff--{W}itt criterion for {K}oszul operads.
\newblock {\em manuscripta mathematica}, 131(1):87, 2009.

\bibitem{Kon99}
Maxim Kontsevich.
\newblock {Operads and motives in deformation quantization}.
\newblock volume~48 of {\em Lett. Math. Phys.}, pages 35--72. 1999.

\bibitem{Kreimer:2000ja}
Dirk Kreimer.
\newblock {Combinatorics of (perturbative) quantum field theory}.
\newblock {\em Phys. Rept.}, 363:387--424, 2002.

\bibitem{Liv06}
Muriel Livernet.
\newblock {A rigidity theorem for pre-Lie algebras}.
\newblock {\em J. Pure Appl. Algebra}, 207(1):1--18, 2006.

\bibitem{LV12}
Jean-Louis Loday and Bruno Vallette.
\newblock {\em {Algebraic Operads}}, volume 346 of {\em Grundlehren der
  mathematischen Wissenschaften}.
\newblock Springer, Heidelberg, 2012.
\newblock Pages xxiv+634.

\bibitem{MV19}
Yuri~I. Manin and Bruno Vallette.
\newblock {Monoidal structures on the categories of quadratic data}.
\newblock {\em Arxiv{1902.03778v2}}, 2019.

\bibitem{MSJ02}
Martin Markl, Steve Shnider, and Jim Stasheff.
\newblock {\em Operads in Algebra, Topology and Physics}, volume~96 of {\em
  Mathematical Surveys and Monographs}.
\newblock American Mathematical Society, 2002.

\bibitem{Men15}
Miguel~A. M{\'e}ndez.
\newblock {\em {Set operads in combinatorics and computer science}}.
\newblock SpringerBriefs in Mathematics. Springer, Cham, 2015.
\newblock Pages xvi+129.

\bibitem{Slo}
Neil J.~A. Sloane.
\newblock {The On-Line Encyclopedia of Integer Sequences}.
\newblock \url{https://oeis.org/}.

\bibitem{Wil05}
Thomas Willwacher.
\newblock {M. Kontsevich's graph complex and the Grothendieck--Teichm{\"u}ller
  Lie algebra}.
\newblock {\em Invent. Math.}, 200(3):671--760, 2015.

\bibitem{DY16}
Donald Yau.
\newblock {\em Colored Operads}, volume 170.
\newblock American Mathematical Society, 2016.

\end{thebibliography}


\appendix

\section{Koszul duality and Koszul operads}\label{ann_koszul}
As said at the end of Section~\ref{sec_intro}, 
two advantages of defining an operad by its generators and relations are
that it is possible to construct (under some conditions) its Koszul dual and 
that it is possible to check if the operad is Koszul.


\subsection*{Koszul duality}
Let us begin by defining the Koszul dual of an operad.
To do this, we consider from now on that we have an arbitrary order for every finite set $V$ in order to consider the signature of a bijection 
between two different sets. Without loss of generality, we consider these order to follow the alphabetical order when applicable.

For $\spe{S}$ a linear species, we denote by $\spe{S}^*$ the {\em dual} species of $\spe{S}$ which is defined by $\spe{S}^*[V]=\spe{S}[V]^*$ and 
$\spe{S}^*[\sigma](f) = f\circ\spe{S}[\sigma^{-1}]$. We denote by $\spe{S}^\vee$ the species defined by $\spe{S}^\vee[V]=\spe{S}^*[V]$ and
$\spe{S}^\vee[\sigma](f) = \sign(\sigma)f\circ\spe{S}[\sigma^{-1}]$, where $\sign(\sigma)$ is the signature of $\sigma$. 

\begin{definition}
    Let $\Operad =\Ope(\mathcal{G},\mathcal{R})$ be a binary quadratic operad. Define the linear form $\scalar{ -}{ -}$ on 
    $\free{\mathcal{G}^{\vee}}^{(2)}\times\free{\mathcal{G}}^{(2)}$ by
    \begin{equation}
        \scalar{f_1\comp f_2}{x_1\comp x_2} = f_1(x_1)f_2(x_2),
    \end{equation}
    The {\em Koszul dual} of $\Operad$ is then the operad $\Operad^!=\Ope(\mathcal{G}^{\vee},\mathcal{R}^{\bot})$
    where $\mathcal{R}^{\bot}$ is the orthogonal of $\mathcal{R}$ for $\scalar{-}{-}$.
\end{definition}

\begin{example}\label{ex_kdual}
    The {\em Lie operad} $\Lie$ is the quotient of the free operad over one antisymmetric generator by the Jacobi relation. More formally, denote by $[a,b]$
    the generating element of $\Sym_2^{\vee}[\set{a,b}]$ and by $[b,a]= (ab)\cdot [a,b]$ so that $[b,a] = -[a,b]$. Then $\Lie=\Ope(\Sym_2^\vee,\mathcal{R})$ with 
    $\mathcal{R}$ the sub-species of $\free{\Sym_2^\vee}^{(2)}$ generated by the {\em Jacobi relation} $[a,[b,c]]+[c,[a,b]]+[b,[c,a]$, where $[a,[b,c]]$ stands 
    for $[a,\ast]\comp [b,c]$ (one can check that this is indeed stable under the action of $\symgrp_{\set{a,b,c}}$ and hence $\mathcal{R}$ is indeed a species). 
    This operad is the Koszul dual of $\Com$. Indeed, for every pair $\alpha < \beta$  in $\set{a,b,c,\ast}$ ordered with the alphabetical order ($c<\ast$) denote 
    by $s_{\alpha\beta}^{\vee}= [\alpha,\beta]$ the dual of $s_{\set{\alpha,\beta}}\in\ComMag[\set{\alpha, \beta}]$. Then we have, for example:
    \begin{equation}\begin{split}
        \scalar{
            s^\vee_{a\ast}\comp s^\vee_{bc} &+ s^\vee_{c\ast}\comp s^\vee_{ab} + s^\vee_{b\ast}\comp s^\vee_{ca}
            }{
            s_{\set{a,\ast}}\comp s_{\set{b,c}} - s_{\set{c,\ast}}\comp s_{\set{a,b}}
            }\\
        &= \scalar{
            s^\vee_{a\ast}\comp s^\vee_{bc}
            }{
            s_{\set{a,\ast}}\comp s_{\set{b,c}}
            } 
        +  \scalar{
            s^\vee_{c\ast}\comp s^\vee_{ab}
            }{
            s_{\set{a,\ast}}\comp s_{\set{b,c}}
            } 
        + \scalar{
            s^\vee_{b\ast}\comp s^\vee_{ca}
            }{
            s_{\set{a,\ast}}\comp s_{\set{b,c}}
            } \\
        &- \scalar{
            s^\vee_{a\ast}\comp s^\vee_{bc}
            }{
            s_{\set{c,\ast}}\comp s_{\set{a,b}}
            } 
        - \scalar{
            s^\vee_{c\ast}\comp s^\vee_{ab}
            }{
            s_{\set{c,\ast}}\comp s_{\set{a,b}}
            } 
        - \scalar{
            s^\vee_{b\ast}\comp s^\vee_{ca}
            }{
            s_{\set{c,\ast}}\comp s_{\set{a,b}}
            } \\
        &= s^\vee_{a\ast}(s_{\set{a,\ast}})s^{\vee}_{bc}(s_{\set{b,c}}) 
        + s^\vee_{c\ast}(s_{\set{a,\ast}})s^{\vee}_{ab}(s_{\set{b,c}}) 
        + s^\vee_{b\ast}(s_{\set{a,\ast}})s^{\vee}_{ca}(s_{\set{b,c}}) \\
        &- s^\vee_{a\ast}(s_{\set{c,\ast}})s^{\vee}_{bc}(s_{\set{a,b}}) 
        - s^\vee_{c\ast}(s_{\set{c,\ast}})s^{\vee}_{ab}(s_{\set{a,b}}) 
        - s^\vee_{b\ast}(s_{\set{c,\ast}})s^{\vee}_{ca}(s_{\set{a,b}}) \\
        &= 1 +0+0-1-0-0 = 0.
    \end{split}\end{equation}
\end{example}


\subsection*{Koszul operads}\label{sec_koszul}
Koszulity is an important aspect of operad theory. We only give here a very quick overview of Koszulity and Gröbner bases for operads which hides a 
lot of the theory. We do not give the general results but only restricted versions which suffice for our use.
We refer the reader to the literature; for a broader approach of the topic, see for example \cite{LV12}, \cite{Men15}, \cite{Hof10} and \cite{DK10}.
In particular, all the examples presented here come from \cite{DK10}.

In order to give the characterisation which interests us, we need to introduce the concepts of $\mathscr{L}$-species, shuffle operads and Gröbner bases. 
Informally, we can see these objects as the same as species and operads, except with a total order on every set of vertices.

\bigskip
\noindent\textbf{$\mathscr{L}$-species}

A {\em linear positive $\mathscr{L}$-species}  consists of the following data:
\begin{itemize}
   \item for each finite set $V$ and total order $l$ on $V$, a 
   vector space $\spe{S}[V,l]$, such that $\spe{S}[\emptyset,\emptyset]=\set{0}$.
   \item For each increasing bijection $\sigma: (V,l)\to (V',l')$, a linear map $\spe{S}[\sigma]:\spe{S}[V,l]\to \spe{S}[V',l']$.
   These maps should be such that $\spe{S}[\sigma_1\circ\sigma_2] = \spe{S}[\sigma_1]\circ \spe{S}[\sigma_2]$ and $\spe{S}[\id] = \id$.
\end{itemize}

%

In the sequel, we write order to designate a total order and $\mathscr{L}$-species to designate linear positive $\mathscr{L}$-species. For $\spe{S}$ an
$\mathscr{L}$-species and $l$ an order on $V$, we also denote by $\spe{S}[l]=\spe{S}[V,l]$. We can do this since the data of $V$ is included in $l$.
As for species, we denote by $\spe{X}$ the $\mathscr{L}$-species defined $\spe{X}[V,l]=\set{0}$ if $V$ is not a singleton and $\spe{X}[\set{v},v]= \K{v}$ else.

As in the case of classical species, $\mathscr{L}$-species also have constructions on them, but before giving them, let us give some notations.
\begin{itemize}
    \item For $l$ an order on $V$ and $W\subseteq V$ a subset of $V$, we denote by $l_W$ the order on $W$ induced by $l$.
    \item For $l=l_1\dots l_n$ an order on a set $V$ of size $n$, $i\in[n]$ and $\ast\not\in V$, we denote by $l\stackrel{i}{\take}\ast$ the order 
    $l_1\dots l_{i-1}\ast l_i\dots l_n$ on $V+\ast$.
    \item For $l^1,\dots l^k$ $k$ orders on pairwise disjoint sets $V_1,\dots, V_k$, we denote by $sh(l^1,\dots,l^k)=\set{w\,|\, w_{V_i}=l^i}$ the set of 
    {\em shuffles} of $l^1,\dots, l^k$. Note that this is an ``associative operation'' in the sense that for $l^1,l^2,l^3$ three orders, the union of
    the shuffles of $l^1$ with the elements of $sh(l^2,l^3)$ is exactly $sh(l^1,l^2,l^3)$.
    \item  The {\em shuffle compositions} $\Comp_{sh}[V,l]$ of an ordered set $(V,l)$ are the compositions $P=P_1,\dots,P_k$ of $V$ such that, for every 
    $1\leq i<j\leq k$, $\min_l P_i<\min_l P_j$.
\end{itemize}

Let $\spe{R}$ and $\spe{S}$ be two linear $\mathscr{L}$-species and $l$ a total order on $V$. Denote by $n=\card{V}$ and let be $i\in[n]$. We define the following operations.

$$Product\quad \spe{R}\cdot\spe{S}[V,l]= \bigoplus_{l\in sh(l',l'')} \spe{R}[l']\otimes\spe{S}[l''],$$

$$ i\text{-}th Derivative\quad \spe{S}^i[V,l]= \spe{S}[V+\ast, l\stackrel{i}{\take}\ast],$$

$$Composition\quad \spe{R}(\spe{S})[V,l]=\bigoplus_{P\in\Comp_{sh}[V,l]}\spe{R}[\set{P_1,\dots,P_k},P]\otimes \spe{S}[P_1,l_{P_1}]\otimes\dots\otimes\spe{S}[P_k,l_{P_k}].$$

Since we have the $\mathscr{L}$-species $\spe{X}$ and the notion of composition, we can define Schröder tree on $\mathscr{L}$-species in the same way as
for species: if $\spe{S} = \spe{X} + \spe{S}_{2^+}$, then $\stree{\spe{S}} = \spe{X} + \spe{S}_{2^+}(\stree{\spe{S}})$. 
In this case, for $\spe{S}$ a $\mathscr{L}$-species, $\stree{\spe{S}}[V]$ is the vector space of rooted {\em planar} trees with internal vertices decorated 
with elements of $\spe{S}$ and set of leaves $V$.
This is because of the orders: instead of indexing with partitions in the composition we use shuffle compositions.

For $\spe{S}$ a $\mathscr{L}$-species and $l$ an order on $V$, the fact that we have an order enables us an easier notation of the elements of $\stree{\spe{S}}[V,l]$
as operations \textit{e.g.} $\alpha(l_1,\dots, l_n)$.

\begin{example}\label{ex_free_shuffle}
    Let $\spe{S}$ be the $\mathscr{L}$-species defined by $\spe{S}[V,l]=\set{0}$ when $\card{V}\not=2$ and $\spe{S}[V,l]=\K\set{1,\dots n}$ else, for $n$
    an integer greater than 1. Then the generators of $\stree{\spe{S}}^{sh}[[3],123]$ are of the form
    \begin{equation}
        \begin{tikzpicture}[Centering,xscale=0.6,yscale=0.4]
            \node[NodeFree](x)at(0,0){$x$};
            \node[NodeFree](y)at(-1,2){$y$};
            \node[NodeFree, draw=white](1)at(-2,4){$1$};
            \node[NodeFree, draw=white](2)at(0,4){$2$};
            \node[NodeFree, draw=white](3)at(1,2){$3$};
            \draw[EdgeFree](x)--(y);
            \draw[EdgeFree](x)--(3);
            \draw[EdgeFree](1)--(y);
            \draw[EdgeFree](2)--(y);
        \end{tikzpicture}
        \enspace , \enspace
        \begin{tikzpicture}[Centering,xscale=0.6,yscale=0.4]
            \node[NodeFree](x)at(0,0){$x$};
            \node[NodeFree](y)at(-1,2){$y$};
            \node[NodeFree, draw=white](1)at(-2,4){$1$};
            \node[NodeFree, draw=white](3)at(0,4){$3$};
            \node[NodeFree, draw=white](2)at(1,2){$2$};
            \draw[EdgeFree](x)--(y);
            \draw[EdgeFree](x)--(2);
            \draw[EdgeFree](1)--(y);
            \draw[EdgeFree](3)--(y);
        \end{tikzpicture}
        \enspace\text{and}\enspace
        \begin{tikzpicture}[Centering,xscale=0.6,yscale=0.4]
            \node[NodeFree](x)at(0,0){$x$};
            \node[NodeFree](y)at(1,2){$y$};
            \node[NodeFree, draw=white](1)at(-1,2){$1$};
            \node[NodeFree, draw=white](2)at(0,4){$2$};
            \node[NodeFree, draw=white](3)at(2,4){$3$};
            \draw[EdgeFree](x)--(y);
            \draw[EdgeFree](x)--(1);
            \draw[EdgeFree](3)--(y);
            \draw[EdgeFree](2)--(y);
        \end{tikzpicture},
    \end{equation}
    where $1\leq x,y\leq n$. This is because the only shuffle compositions of size 2 of $[3]$ are $(12,3)$, $(13,2)$ and $(1,23)$. 
    These trees can be denoted in a more compact way by $\mu_x(\mu_y(1,2),3)$, $\mu_x(\mu_y(1,3),2)$ and $\mu_x(1,\mu_y(2,3))$.
\end{example}

Let us end this brief presentation of $\mathscr{L}$-species by giving their link to classical species. Let $\mathscr{F}$ be the forgetful functor which send a  
species $\spe{S}$ on the $\mathscr{L}$-species $\spe{S}^{\mathscr{F}}$ defined by:
\begin{itemize}
    \item for $l$ an order on $V$, $\spe{S}^{\mathscr{F}}[V,l]=\spe{S}[V]$. 
    \item For $\sigma:(V,l)\to (V',l')$ an increasing bijection, $\spe{S}^{\mathscr{F}}[\sigma]$ is given by
    \begin{equation}
        \spe{S}^{\mathscr{F}}[V,l]=\spe{S}[V] \stackrel{\sigma}{\to} \spe{S}[W]= \spe{S}^{\mathscr{F}}[W,l'].
    \end{equation}
    \item For $f:\spe{S}\to\spe{R}$ a species morphism, $f^{\mathscr{F}}$ is the $\mathscr{L}$-species morphism given by
        \begin{equation}
        f^{\mathscr{F}}_{V,l}:\spe{S}^{\mathscr{F}}[V,l]= \spe{S}[V] \stackrel{f}{\to} \spe{R}[V]= \spe{R}^{\mathscr{F}}[V,l].
    \end{equation}
\end{itemize}
This is a forgetful functor in the sense that we forget the action $\sigma_V$ on $\spe{S}[V]$. We have the following fundamental proposition from
\cite{DK10}.

\begin{proposition}[Proposition 3 in \cite{DK10}]\label{prop_dk10}
    Let $\spe{S}$ and $\spe{R}$ be two species. Then 
    \begin{equation}
        (\spe{R}(\spe{S}))^{\mathscr{F}}=\spe{R}^\mathscr{F}(\spe{S}^\mathscr{F}).
    \end{equation}
\end{proposition}

\bigskip
\noindent\textbf{Shuffle operads}

We would like to define shuffle operads as $\mathscr{L}$-species satisfying the same axioms that a linear species must satisfy to be an operad. For now we can
not do this because we do not have the notion of derivative of a shuffle operad $\Operad'$. Fortunately, there is a way to make sense of the different 
species appearing in the diagrams \eqref{def_op}. 

For $l$ an order on $V$ and $v\in V$, denote by $\text{arg}_lv$ the index of $v$ in $l$: $l_{\text{arg}_lv}=v$. 
For $\spe{R}$ and $\spe{S}$ two $\mathscr{L}$-species we define the $\mathscr{L}$-species $\spe{R}'\cdot\spe{S}$ and $\spe{R}''\cdot\spe{S}^2$ as follow:
\begin{equation}\begin{split}
    \spe{R}'\cdot\spe{S}[V,l] &= \bigotimes_{l\in sh(l^1,l^2)} \spe{R}^{\text{arg}_l l_1^2}[l^1]\otimes\spe{S}[l^2], \\
    \spe{R}''\cdot\spe{S}^2[V,l] &= \bigotimes_{l\in sh(l^1,l^2,l^3)} \spe{R}^{\text{arg}_l l^2_1, \text{arg}_l l^3_1}[l^1]\otimes\spe{S}[l^2]\otimes\spe{S}[l^3].
\end{split}\end{equation}

A {\em shuffle operad} is then a $\mathscr{L}$-species $\Operad$ with a unity $e$ and a partial composition $\comp^{sh}:\Operad'\cdot\Operad\to\Operad$ 
such that the diagrams \eqref{def_op} commutes. 
For $\spe{S}$ a $\mathscr{L}$-species, the {\em free shuffle operad over $\spe{S}$} is denoted by $\free{\spe{S}}^{sh}$ and defined in the same way as the free
operad over a species. The same goes with the {\em ideal of a shuffle operad} and the notation $\Ope_{sh}(\mathcal{G},\mathcal{R})$.

\begin{remark}
    With the notations of elements of the free operad as operations, the partial composition of the free operad is then the composition of operation:
    $\mu_x(\dots, \ast,\dots)\comp^\xi \mu_y(\dots)= \mu_x(\dots,\mu_y(\dots),\dots)$.
\end{remark}

We have the following corollary from Proposition~\ref{prop_dk10}.
\begin{theorem}[Corollary 1 in \cite{DK10}]
    Let $\mathcal{G}$ be a species. The image by $\mathscr{F}$ of the free operad generated by $\mathcal{G}$ is isomorphic to the free shuffle operad generated
    by $\mathscr{F}(\mathcal{G})$. For $\mathcal{R}$ a sub-species of $\free{\mathcal{G}}$, the image by $\mathscr{F}$ of the operad ideal generated by $\mathcal{R}$ is 
    isomorphic to the shuffle operad ideal generated  by $\mathscr{F}(\mathcal{R})$.
    This writes as $\free{\mathcal{G}}^{\mathscr{F}} \cong \free{\mathcal{G}^{\mathscr{F}}}^{sh}$ and $(\mathcal{R})^{\mathscr{F}}=(\mathcal{R}^{\mathscr{F}})$. 

    Hence $\Ope(\mathcal{G},\mathcal{R})^{\mathscr{F}}\cong\Ope_{sh}(\mathcal{G}^{\mathscr{F}},\mathcal{R}^{\mathscr{F}})$.
\end{theorem}

\bigskip
\noindent\textbf{Admissible order}

Let $\spe{S}$ be a $\mathscr{L}$-species. In order to define the notion of Gröbner bases, we need to introduce an order on the trees generating $\free{\spe{S}}^{sh}[V,l]$.
Instead of giving the broader notion of admissible order defined in \cite{DK10}, we only give a small variation of the {\em path-lexicographic ordering}.

First for every order $l$ on $V$, fix a basis of $\spe{S}[l]$ and an order on this basis such that for every $x$ in the chosen basis of $\spe{S}[l]$ and $\sigma:l\to l'$,  $\sigma\cdot x$ is also in the chosen basis of $\spe{S}[l']$ and for $y$ an other element of the basis greater than $x$ we have 
$\sigma\cdot x<\sigma\cdot y$.
That is to say, the order does not depend on the labels (but it can depend on their relative order). Given an ordered basis, we also have an order on the words on elements of the basis given by the lexicographic order.
Let now be $t\in\free{\spe{S}}^{sh}[l_1\dots l_n]$ such that every internal node is labelled by an element of the chosen bases. For all $i\in[n]$, there is a unique path from $l_i$ to the root of $t$. Denote by $a_i$ the word composed, from left
to right, of the labels of the nodes of this path, from the root to the leaf. We associate to $t$ the sequence $(a_1,\dots, a_i, w)$, where $w$ is the
word obtained by reading the leaves of $t$ from left to right. 

For two trees $t,t'\in\free{\spe{S}}^{sh}$, with associated sequences $(a_1,\dots, a_n, w)$ and $(b_1\dots, b_n, w')$ we then compare $t$ and $t'$ by lexicographicaly comparing $a_1$ with $b_1$ then $a_2$ with $b_2$ etc and reverse lexicographicaly comparing $w$ with $w'$ if $a_i=b_i$ for all $i$.

\begin{example}
    The $\mathscr{L}$-species $\spe{S}$ of Example~\ref{ex_free_shuffle} have natural ordered bases equal to the set $\set{1,\dots,n}$ with the natural order.
    The sequences attached to the given trees are then respectively $(xy,xy,123)$, $(xy,x,xy,132)$ and $(x,xy,xy,123)$ and we have
    \begin{equation}
        \begin{tikzpicture}[Centering,xscale=0.6,yscale=0.4]
            \node[NodeFree](x)at(0,0){$x$};
            \node[NodeFree](y)at(-1,2){$y$};
            \node[NodeFree, draw=white](1)at(-2,4){$1$};
            \node[NodeFree, draw=white](2)at(0,4){$2$};
            \node[NodeFree, draw=white](3)at(1,2){$3$};
            \draw[EdgeFree](x)--(y);
            \draw[EdgeFree](x)--(3);
            \draw[EdgeFree](1)--(y);
            \draw[EdgeFree](2)--(y);
        \end{tikzpicture}
        \enspace > \enspace
        \begin{tikzpicture}[Centering,xscale=0.6,yscale=0.4]
            \node[NodeFree](x)at(0,0){$x'$};
            \node[NodeFree](y)at(-1,2){$y'$};
            \node[NodeFree, draw=white](1)at(-2,4){$1$};
            \node[NodeFree, draw=white](3)at(0,4){$3$};
            \node[NodeFree, draw=white](2)at(1,2){$2$};
            \draw[EdgeFree](x)--(y);
            \draw[EdgeFree](x)--(2);
            \draw[EdgeFree](1)--(y);
            \draw[EdgeFree](3)--(y);
        \end{tikzpicture}
    \end{equation}
    if $x>x'$ or $x=x'$ and $y>y'$ or $x=x'$ and $y=y'$. 
\end{example}

Now remark that trees in $\free{\spe{S}}^{sh}[V,l]$ with basis elements as internal node labels make a basis $\free{\spe{S}}^{sh}[V,l]$. Hence any $x\in\free{\spe{S}}^{sh}$ can be written as a sum of such elements and we define $\text{lt}(x)$ the {\em leading term} of $x$ as the maximal element in 
this sum.

\bigskip
\noindent\textbf{Divisibility and S-polynomials}

Let $\spe{S}$ be a $\mathscr{L}$-species. A tree $t$ of $\free{\spe{S}}^{sh}$ is {\em divisible} by another tree $t'$ of $\free{\spe{S}}^{sh}$ if $t'$
is a sub-tree of $t$. Here a sub-tree must also conserve the order of the leaves.
A tree $u$ of $\free{\spe{S}}^{sh}$ is a {\em small common multiple} of two tree $t$ and $t'$ if it is divisible by both $t$ and $t'$ and its number of vertices is 
less than the total number of vertices of $t$ and $t'$.

\begin{example}
    Let $\spe{S}$ be a $\mathscr{L}$-species, $l=l_1,\dots l_n$ an order and $\alpha(\beta(l_1,l_3),\gamma(\beta(l_2,l_6),l_4,l_5))$ an element of $\free{\spe{S}}^{sh}$.
    This tree has among its divisors $\alpha(\beta(l_1,l_3),l_2)$ and $\gamma(\beta(l_1,l_4),l_2,l_3)$ but not $\gamma(\beta(l_1,l_3),l_2,l_4)$.
\end{example}

If $t$ is divisible by $t'$, then there exists trees $\alpha$ and $\beta_1,\dots,\beta_k$ such that $t=\alpha(\dots, t'(\beta_1,\dots,\beta_k), \dots)$. We denote
by $m_{t,t'}$ the operation on any tree with same number of leaves than $t'$ which associate to a tree $u$ the tree $\alpha(\dots, u(\beta_1,\dots,\beta_k),\dots)$.
Let now $V$ be a finite set, $l$ an order on $V$ and $x,y\in\free{\spe{S}}^{sh}[V,l]$. Assume $\text{lt}(x)$ and $\text{lt}(y)$ have a small 
common multiple $u$. Then we have $m_{u,\text{lt}(x)}(\text{lt}(x)) = u = m_{u,\text{lt}(y)}(\text{lt}(y))$. We call {\em S-polynomial of $x$ and $y$ 
(corresponding to $u$)} the element
\begin{equation}
    s_u(x,y)= m_{u,\text{lt}(x)}(x) -\frac{c_x}{c_y}m_{u,\text{lt}(y)}(y),
\end{equation}
where $c_x$ and $c_y$ are the respective coefficient of the leading terms of $x$ and $y$.

\bigskip
\noindent\textbf{Gröbner bases and Koszulity}

We can finally give the definition of a Gröbner bases and a Koszul operad.

\begin{definition}[Definition 13 in \cite{DK10}]
    Let $\mathcal{G}$ be a $\mathscr{L}$-species and $\mathcal{R}$ be a $\mathscr{L}$-sub-species of $\free{\mathcal{G}}^{sh}$. Let $\mathcal{B}$ 
    be basis of $\mathcal{R}$.
    We say that $\mathcal{B}$ is a {\em Gröbner bases} of $\mathcal{R}$ if for every $x\in(\mathcal{R})$, the leading term of $x$ is divisible by 
    the leading term of one element in $\mathcal{B}$.
\end{definition}

\begin{definition}[Corollary 3 in \cite{DK10}]
    Let $\mathcal{G}$ be a $\mathscr{L}$-species and $\mathcal{R}$ be a quadratic $\mathscr{L}$-sub-species of $\free{\mathcal{G}}^{sh}$. We say that
    $\Ope_{sh}(\mathcal{G},\mathcal{R})$ is {\em Koszul} if $\mathcal{R}$ admits a Gröbner bases.

    Let $\mathcal{G}$ be a set species and $\mathcal{R}$ be a quadratic sub-species of $\free{\mathcal{G}}^{sh}$. We say that
    $\Ope(\mathcal{G},\mathcal{R})$ is {\em Koszul} if $\Ope_{sh}(\mathcal{G}^{\mathscr{F}},\mathcal{R}^{\mathscr{F}})$ is Koszul.
\end{definition}

When $\Operad$ is a Koszul symmetric operad, it admits a Koszul dual $\Operad^!$. In this case the Hilbert series of $\Operad$ and $\Operad^!$ are related by
the identity:
\begin{equation} \label{hdual}
    \hilbert{\Operad}(-\hilbert{\Operad^!}(-t)) = t.
\end{equation}

Let us finish by a characterisation of Gröbner bases.

\begin{proposition}[Theorem 1 in \cite{DK10}]\label{prop_spol}
    Let $\mathcal{G}$ be a $\mathscr{L}$-species and $\mathcal{R}$ be a $\mathscr{L}$-sub-species of $\free{\mathcal{G}}^{sh}$. Let $\mathcal{B}$ 
    be basis of $\mathcal{R}$. Then $\mathcal{B}$ is a Gröbner bases if and only if for all pair of elements in $\mathcal{B}$, their S-polynomials
    are congruent to zero modulo $\mathcal{B}$ (\textit{i.e.} they are in $(\mathcal{R})$).
\end{proposition}

\end{document}